\newcommand{\BEAS}{\begin{eqnarray*}}
\newcommand{\EEAS}{\end{eqnarray*}}
\newcommand{\BEQ}{\begin{equation}}
\newcommand{\EEQ}{\end{equation}}
\newcommand{\BIT}{\begin{itemize}}
\newcommand{\EIT}{\end{itemize}}
\newcommand{\eg}{{\it e.g.}}
\newcommand{\ie}{{\it i.e.}}
\newcommand{\cf}{{\it cf. }}
\def\<#1,#2>{\langle #1,#2\rangle}
\newtheorem{theorem}{Theorem}
\newtheorem{remark}{Remark}
\newtheorem{assumption}{Assumption}
\newtheorem{lemma}[theorem]{Lemma}
\newtheorem{corollary}[theorem]{Corollary}
\newtheorem{proposition}[theorem]{Proposition}
\theoremstyle{definition}
\newtheorem{definition}{Definition}[section]
\long\def\@makecaption#1#2{
   \vskip 9pt
   \begin{small}
   \setbox\@tempboxa\hbox{{\bf #1:} #2}
   \ifdim \wd\@tempboxa > 5.5in
        \begin{center}
        \begin{minipage}[t]{5.5in}
        \addtolength{\baselineskip}{-0.95pt}
        {\bf #1:} #2 \par
        \addtolength{\baselineskip}{0.95pt}
        \end{minipage}
        \end{center}
   \else
	\hbox to\hsize{\hfil\box\@tempboxa\hfil}
   \fi
   \end{small}\par
}
\newcounter{oursection}
\newcounter{lecture}
\newcommand{\norm}[1]{{\left\vert\kern-0.25ex\left\vert\kern-0.25ex\left\vert #1
    \right\vert\kern-0.25ex\right\vert\kern-0.25ex\right\vert}}
\newtheorem{example}{Example}
\title{MF-OMO: An Optimization Formulation of Mean-Field Games}
\date{}
\author{Xin Guo 
\thanks{University of California, Berkeley. 
%  David S.~Hippocampus\thanks{Use footnote for providing further information
%    about author (webpage, alternative address)---\emph{not} for acknowledging
%    funding agencies.} \\
%  Department of Computer Science\\
%  Cranberry-Lemon University\\
%  Pittsburgh, PA 15213 \\
\textbf{Email:}  \texttt{xinguo@berkeley.edu}} 
  \thanks{Amazon.com. \textbf{Email:} \texttt{xnguo@amazon.com}}
  \footnotemark[5]
  % examples of more authors
  % \And
  % Coauthor \\
  % Affiliation \\
  % Address \\
  % \texttt{email} \\
  % \AND
  % Coauthor \\
  % Affiliation \\
  % Address \\
  % \texttt{email} \\
  % \And
  % Coauthor \\
  % Affiliation \\
  % Address \\
  % \texttt{email} \\
  % \And
  % Coauthor \\
  % Affiliation \\
  % Address \\
  % \texttt{email} \\
    \and
Anran Hu 
\thanks{University of Oxford. 
\textbf{Email:} \texttt{Anran.Hu@maths.ox.ac.uk}}
\and
Junzi Zhang
\thanks{Citadel Securities. \textbf{Email:} \texttt{junzizmath@gmail.com}} 
%  \thanks{Work done prior to joining or outside of Citadel Securities.}
}
\begin{document}

\maketitle

\begin{abstract}
  This paper proposes a new mathematical paradigm to analyze  discrete-time mean-field games. 
It is shown that finding  Nash equilibrium solutions for a general class of discrete-time mean-field games is equivalent to solving an optimization problem with bounded variables and simple convex constraints, called MF-OMO. This equivalence framework enables finding multiple (and possibly all) Nash equilibrium solutions of mean-field games by standard algorithms. For instance, projected gradient descent is shown to be capable of retrieving all possible  Nash equilibrium solutions when there are finitely many of them, with proper initializations. 
Moreover, analyzing mean-field games with linear rewards and mean-field independent dynamics is reduced to solving a finite number of linear programs, hence  solvable in finite time. This framework does not rely on  the contractive and the monotone assumptions and the uniqueness of the Nash equilibrium. 
\end{abstract}

\section{Introduction}\label{introduction}
Theory of  mean-field games, pioneered by the seminal work of  \cite{huang2006large} and  \cite{lasry2007mean}, provides an ingenious way of analyzing and finding the $\epsilon$-Nash equilibrium solution to the otherwise notoriously hard 
 $N$-player stochastic games. Ever since its inception, the literature of mean-field games has experienced an exponential growth  both in theory and in applications  (see the standard reference \cite{carmona2018probabilistic}).
 
 There are two key criteria for finding Nash equilibrium solutions of a mean-field game. The first is the optimality condition for the representative player, and the second is the consistency condition to ensure that the solution from the representative player is consistent with the mean-field flow of all players. Consequently, 
 mean-field games with continuous-time stochastic dynamics %, when its NE solution exists and is unique,
 have been studied 
 under three approaches. The first is the original fixed-point \cite{huang2006large} and PDE \cite{lasry2007mean} approach %fixed-point approach   
 that analyzes the joint system of  a backward Hamilton-Jacobi-Bellman equation for a control problem 
 and  a forward 
 Fokker-Planck equation  for the controlled dynamics. This naturally leads to the second probabilistic approach, which focuses on the associated forward-backward stochastic differential equations  \cite{buckdahn2009mean,carmona2013mean}. The last and the latest effort in mean-field game theory is to use master equation to analyze mean-field games with common noise and the propagation of chaos for optimal trajectories \cite{bensoussan2015master,cardaliaguet2019master,delarue2019master}. %\cite{daniellacker}. 
  All existing methodologies are based  on either the contractivity or  monotonicity conditions,  or the uniqueness of the Nash equilibrium solution, or some particular game structures; see  \cite{vasiliadis2019introduction,achdou2021mean} for a detailed summary. Accordingly, 
   finding Nash equilibrium solutions of mean-field games relies on  similar assumptions and structures; see for instance \cite{lauriere2021numerical}.

It is well known, however, that non-zero-sum stochastic games and mean-field games in general have multiple Nash equilibrium solutions and that uniqueness of the Nash equilibrium solution, contractivity, and monotonicity are more of an exception than a rule. Yet, systematic and efficient approaches for finding multiple or all Nash equilibrium solutions of general mean-field games remain elusive.

\paragraph{Our work and approach.}
In this work, we show that finding  Nash equilibrium solutions for a general class of discrete-time mean-field games is equivalent to solving an optimization problem with bounded variables and simple convex constraints, called MF-OMO (Mean-Field Occupation Measure Optimization) (Section \ref{MFG_opt_idea}).
This equivalence is established without assuming either the uniqueness of the Nash equilibrium solution or the contractivity  or monotonicity conditions. 

The equivalence is built through several steps. The first step, inspired by  the classical linear program formulation of the single-agent Markov decision process (MDP) problem \cite{puterman2014markov}, is to introduce occupation measure for the representative player, and to transform the problem of finding Nash equilibrium solutions to a constrained optimization problem (Theorem \ref{thm:nash-opt}) by the strong duality of the linear program to bridge the two criteria of Nash equilibrium solutions. %(Lemma \ref{condition_A_B_prime}).
The second step is to utilize a solution modification technique (Proposition \ref{interpretation_y_z}) to deal with potentially unbounded variables in the optimization problem and obtain the MF-OMO formulation (Theorem \ref{thm:mfvmo-full}). 
Further analysis of this equivalent optimization formulation enables establishing the connection between the suboptimality of MF-OMO and the exploitability of any policy for mean field games (Section \ref{sec:expl-vs-mfomo}). This is obtained by the perturbation analysis on MDPs (Lemma \ref{perturbation}) and by the strong duality of linear programs. 

These equivalence  and connections enable adopting families of optimization algorithms to find multiple (and possibly all) Nash equilibrium solutions of mean-field games. In particular,  detailed studies on the convergence of MF-OMO with the projected gradient descent (PGD) algorithm  to Nash equilibrium solutions of mean-field games are presented (Theorem \ref{local_conv_global}). For instance, PGD is shown to be capable of retrieving all possible Nash equilibrium solutions when there are finitely many of them, with proper initializations. Moreover, analyzing mean-field games with linear rewards and mean-field independent dynamics is shown to be reduced to solving a finite number of linear programs, hence solvable in finite time (Proposition \ref{finite-convergence-linear-mfg}). 
%To our best knowledge, our work provides the first set of provably convergent algorithms for MFGs with possibly multiple NE solutions and with neither contractivity nor monotonicity assumptions.

Finally, this MF-OMO framework can be  extended to other variants of mean-field games, as illustrated in the case of personalized mean-field games in Section \ref{refined_mfne}.

\paragraph{Related work.} Our approach is inspired by the classical linear program framework for MDPs, originally proposed in \cite{manne1960linear} for discrete-time MDPs with finite state-action spaces and then extended first to semi-MDPs \cite{osaki1968linear} and then to approximate dynamic programming \cite{schweitzer1985generalized,de2003linear}, as well as to continuous-time stochastic control  \cite{stockbridge1990time,bhatt1996occupation,kurtz1998existence} and singular control \cite{taksar1997infinite,kurtz2001stationary} with continuous state-action spaces, among  
others \cite{wolfe1962linear,derman1962sequential,denardo1970linear,hordijk1979linear}. %,borkar1988convex}.
In the similar spirit, our work builds an optimization framework for discrete-time mean-field games. 

There are several existing computational algorithms for finding Nash equilibrium solutions of discrete-time mean-field games. In particular, \cite{elie2020convergence,perolat2021scaling} assume strict monotonicity, while \cite{perrin2020fictitious} assumes weak monotonicity, all of which also assume the dynamics to be mean-field independent. In addition,  \cite{guo2019learning,fu2019actor,anahtarci2019fitted} focus on contractive mean-field games, 
\cite{angiuli2022unified} studies mean-field games with unique Nash equilibrium solutions, and %\cite{anahtarci2022q,xie2021learning,cui2021approximately} 
\cite{cui2021approximately,anahtarci2022q}
analyze mean-field games that are sufficiently close to some contractive regularized mean-field games. {\color{black}Under these assumptions, global convergence is obtained for some of these algorithms. In contrast, we focus on presenting an equivalent optimization framework for mean-field games, and establishing local convergence without these assumptions.} %, and leave the global convergence analysis for future work.}

The most relevant to our works are earlier works of \cite{bouveret2020mean,dumitrescu2021control,dumitrescu2022linear} on continuous-time mean-field games, in which they replace the best-response/optimality condition in Nash equilibrium solutions by the linear program optimality condition.
%reformulate the conditions of NE solutions as a combination of the classical Fokker-Planck equation and a linear program that corresponds to the best-response/optimality condition. 
Their reformulation approach leads to new existence results for relaxed Nash equilibrium solutions and computational benefits. In contrast, our work formulates the  problem of finding Nash equilibrium solutions as a \textit{single} optimization problem which captures {\it both} the best-response condition and the Fokker-Planck/consistency condition. Our formulation shows that finding Nash equilibrium solutions of discrete-time mean-field games with finite state-action spaces is generally no harder than solving a smooth, time-decoupled optimization problem with bounded variables and trivial convex constraints. Such a connection allows for directly utilizing various  optimization tools, algorithms and solvers to find Nash equilibrium solutions of mean-field games. 
Moreover, our work provides, for the first time to our best knowledge, provably convergent algorithms for mean-field games with possibly multiple Nash equilibrium solutions and without contractivity or monotonicity assumptions.

%together. % with the help of complementary variables.   %However, the resulting formulation is still not a single optimization problem

\section{Problem setup}\label{prob_setup}
We consider an extended \cite{carmona2021probabilistic} or general \cite{guo2019learning} mean-field game in a finite-time horizon $T$ with a state space $\mathcal{S}$ and an  action space $\mathcal{A}$, where $|\mathcal{S}|=S<\infty$ and $|\mathcal{A}|=A<\infty$. In this game, a representative player starts from a state $s_0\sim\mu_0$, with $\mu_0$ the (common) initial state distribution of all players of an infinite population. At each time $t\in\mathcal{T}=\{0,1,\dots,T\}$ and 
when at state $s_t$, she chooses an action $a_t\in\mathcal{A}$ from some randomized/mixed policy $\pi_t:\mathcal{S}\rightarrow\Delta(\mathcal{A})$, where $\Delta(\mathcal{A})$ denotes the set of probability vectors on $\mathcal{A}$. Note that mixed policies are also known as relaxed controls. She will then move to a new state $s_{t+1}$ according to a transition probability $P_t(\cdot|s_t,a_t,L_t)$ and receive a reward $r_t(s_t,a_t,L_t)$, where $L_t\in\Delta(\mathcal{S}\times\mathcal{A})$ is the joint state-action distribution among  all players at time $t$,  referred to as the mean-field information hereafter, and $\Delta(\mathcal{S}\times\mathcal{A})$ is the set of probability vectors on $\mathcal{S}\times\mathcal{A}$. Denote $r_{\max}=\sup_{t\in\mathcal{T},s\in\mathcal{S},a\in\mathcal{A},L\in\Delta(\mathcal{S}\times\mathcal{A})}|r_t(s,a,L)|$, which is assumed throughout the paper as finite.

Given the mean-field flow $\{L_t\}_{t\in\mathcal{T}}$, the  objective of this representative player is to maximize her accumulated rewards, \textit{i.e.,} to solve the following MDP problem:
\begin{equation}\label{setup:MFG}
\begin{split}
\text{maximize}_{\{\pi_t\}_{t\in\mathcal{T}}}  \quad &\mathbb{E}\left[\sum_{t=0}^T r_t(s_t, a_t,L_t)\Big|s_0\sim \mu_0\right] \\
\text{subject to} \quad & {\color{black}s_{t+1}\sim P_t(\cdot|s_t,a_t,L_t)\, (t\in\mathcal{T}\backslash\{T\}),\quad a_t\sim \pi_t(\cdot|s_t)\, (t\in\mathcal{T}).}
\end{split}
\end{equation}
Throughout the paper, for a given mean-field flow $L=\{L_t\}_{t\in\mathcal{T}}$, we will denote   the mean-field induced MDP \eqref{setup:MFG} as $\mathcal{M}(L)$,   $V_t^\star(L)\in\mathbb{R}^S$ as its optimal total expected reward, \ie, value function, starting from time $t$,  with the $s$-th entry $[V_t^\star(L)]_s$ being the optimal expected reward starting from state $s$ at time $t$, and  $V_{\mu_0}^\star(L)=\sum_{s\in\mathcal{S}}\mu_0(s)[V_0^\star(L)]_s$ as   its optimal expected total reward starting from  $\mu_0$. 
%... XXX. Rewrite the original MFG solution concept definition -- should mention competitive setting and that we are computing a policy for all players instead of one? Mention heterogeneity in states?
Correspondingly,  we denote respectively  $V_t^{\pi}(L)\in\mathbb{R}^S$, $[V_t^{\pi}(L)]_s$, as well as  $V_{\mu_0}^{\pi}(L)=\sum_{s\in\mathcal{S}}\mu_0(s)[V_0^{\pi}(L)]_s$, 
under a given  policy $\pi=\{\pi_t\}_{t\in\mathcal{T}}$ for $\mathcal{M}(L)$. %, %\cf \eqref{optimality_agent}), \ie, $V_{\mu_0}^{\pi}(L)=\sum_{s\in\mathcal{S}}\mu_0(s)[V_0^{\pi}(L)]_s$.

To analyze such a mean-field game, the most widely adopted solution concept is the Nash equilibrium. A policy sequence $\{\pi_t\}_{t\in\mathcal{T}}$ and a mean-field flow  $\{L_t\}_{t\in\mathcal{T}}$ constitute a Nash equilibrium solution of this finite-time horizon mean-field game, %problem \ref{setup:MFG}, 
if the following conditions are satisfied.
%The most widely adopted solution concept for MFG is the (mean-field) Nash equilibrium.
\begin{definition}[Nash equilibrium solution]\label{mfne}
~
\begin{itemize}
    \item[1)] (Optimality/Best Response) Fixing $\{L_t\}_{t\in\mathcal{T}}$, $\{\pi_t\}_{t\in\mathcal{T}}$ solves the optimization problem \eqref{setup:MFG},
% \begin{equation}\label{optimality_agent}
% \begin{array}{ll}
% \text{maximize}_{\{\pi_t\}_{t\in\mathcal{T}}} & \mathbb{E}[\sum_{t=0}^Tr_t(s_t,a_t,L_t)|s_0\sim \mu_0]\\
% \text{subject to} & s_{t+1}\sim P_t(\cdot|s_t,a_t,L_t),\quad a_t\sim\pi_t(\cdot|s_t),\quad t=0,\dots,T-1,
% \end{array}  
% \end{equation}
\textit{i.e.}, $\{\pi_t\}_{t\in\mathcal{T}}$ is optimal for the representative agent given the mean-field flow $\{L_t\}_{t\in\mathcal{T}}$; 
\item[2)] (Consistency) Fixing $\{\pi_t\}_{t\in\mathcal{T}}$, the consistency of the mean-field flow holds, namely
\begin{equation}\label{consistency_pop_flow}
L_t=\mathbb{P}_{s_t,a_t},\text{ where } s_{t+1}\sim P_t(\cdot|s_t,a_t,L_t),\, a_t\sim\pi_t(\cdot|s_t),\, s_0\sim \mu_0,\, t\in\mathcal{T}\backslash \{T\}. %=0,\dots,T-1.
\end{equation}
Here $\mathbb{P}_{x}$ denotes the probability distribution of a random variable/vector $x$. 
\end{itemize} 
\end{definition}
% Moreover,  $\{\pi_t\}_{t\in\mathcal{T}}$ is called an MFNE of the finite-time horizon MFG  %problem \ref{setup:MFG} 
% if there exists a flow $\{L_t\}_{t\in\mathcal{T}}$ such that $(\{\pi_t\}_{t\in\mathcal{T}},\{L_t\}_{t\in\mathcal{T}})$ constitute an MFNE/mean-field solution of this problem.  %problem \ref{setup:MFG}.

Note that \eqref{setup:MFG} requires that the policy sequence $\{\pi_t\}_{t\in\mathcal{T}}$ is the best response to the flow $\{L_t\}_{t\in\mathcal{T}}$, while  \eqref{consistency_pop_flow} requires that the  flow $\{L_t\}_{t\in\mathcal{T}}$ is the corresponding mean-field flow induced when all players adopt the policy sequence $\{\pi_t\}_{t\in\mathcal{T}}$.\footnote{Condition 2) is also known  as the Fokker-Planck equation in the continuous-time mean-field game literature.} 
Also note that \eqref{consistency_pop_flow} can be written more explicitly as follows: %in the following recursive manner: 
\begin{equation}\label{consistency_def}
\begin{split}
    L_0(s,a)&=\mu_0(s)\pi_0(a|s), \\ L_{t+1}(s',a')&=\pi_{t+1}(a'|s')\sum_{s\in\mathcal{S}}\sum_{a\in\mathcal{A}}L_t(s,a)P_t(s'|s,a,L_t),\quad \forall t=0,\dots,T-1.
    \end{split}
\end{equation}

The following existence result for Nash equilibrium solutions holds as long as the transitions and rewards are continuous in $L_t$. The proof is based on the Kakutani fixed-point theorem; it is  almost identical to those in \cite{saldi2018markov, cui2021approximately}, except for replacing the state mean-field flow  with the state-action joint mean-field flow. %and hence omitted for brevity.  
%XXX. Mention the Naci Saldi and Kai Cui references and Kakutani fixed-point theorem. Also mention the existence of MDP optimal policy?
\begin{proposition}\label{existence}
Suppose that $P_t(s'|s,a,L_t)$ and $r_t(s,a,L_t)$ are both continuous in $L_t$ for any $s,s'\in\mathcal{S}$, $a\in\mathcal{A}$ and $t\in\mathcal{T}$. Then a Nash equilibrium solution exists. 
\end{proposition}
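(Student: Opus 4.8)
The plan is to apply the Kakutani fixed-point theorem to a set-valued ``best-response-then-induced-flow'' map on the space of mean-field flows. Set $\mathcal{L} = \prod_{t=0}^{T}\Delta(\mathcal{S}\times\mathcal{A})$, which is a nonempty, convex, compact subset of $\mathbb{R}^{(T+1)SA}$. For a fixed flow $L=\{L_t\}_{t\in\mathcal{T}}\in\mathcal{L}$, the kernels $P_t(\cdot|\cdot,\cdot,L_t)$ and rewards $r_t(\cdot,\cdot,L_t)$ are frozen, so $\mathcal{M}(L)$ is an ordinary finite-horizon MDP. I would define the correspondence $\Phi:\mathcal{L}\rightrightarrows\mathcal{L}$ by letting $\Phi(L)$ be the set of state-action flows $\{M_t\}_{t\in\mathcal{T}}$ that are induced, via \eqref{consistency_def} with transitions frozen at $L$, by some policy $\pi$ that is optimal for $\mathcal{M}(L)$. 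A fixed point $L^\star\in\Phi(L^\star)$ then yields an optimal policy $\pi^\star$ of $\mathcal{M}(L^\star)$ whose induced flow equals $L^\star$, i.e. a pair satisfying both conditions of Definition \ref{mfne}.

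To apply Kakutani I must check that $\Phi$ has nonempty, convex, compact values and a closed graph. The key observation, which gives convexity despite the nonlinear policy-to-flow map, is that once $L$ is frozen the set of induced flows coincides with the polytope
\[
F(L)=\Big\{M\in\mathcal{L}:\ \textstyle\sum_{a}M_0(s,a)=\mu_0(s),\ \sum_{a}M_{t+1}(s',a)=\sum_{s,a}M_t(s,a)P_t(s'|s,a,L_t)\Big\},
\]
the flow-conservation constraints being linear in $M$; one inclusion follows by marginalizing \eqref{consistency_def}, the other by reading off a policy $\pi_t(a|s)=M_t(s,a)/\sum_{a'}M_t(s,a')$ (defined arbitrarily where the denominator vanishes) and checking by induction that it reproduces $M$. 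Since the total reward $\sum_{t,s,a}M_t(s,a)\,r_t(s,a,L_t)$ is linear in $M$, the set $\Phi(L)$ is exactly the maximizing face of $F(L)$, hence nonempty (the polytope is nonempty and compact, e.g. the uniform policy is feasible), convex, and compact, and its elements are precisely the flows induced by optimal policies.

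The main work is the closed graph of $\Phi$. Take $L^{(n)}\to L^\infty$ and $M^{(n)}\in\Phi(L^{(n)})$ with $M^{(n)}\to M^\infty$; I want $M^\infty\in\Phi(L^\infty)$. Feasibility $M^\infty\in F(L^\infty)$ passes to the limit immediately because $P_t(s'|s,a,L^{(n)}_t)\to P_t(s'|s,a,L^\infty_t)$ by the assumed continuity of $P$ in $L$. The delicate half is preserving optimality, which amounts to lower hemicontinuity of $L\mapsto F(L)$: given any competitor $M\in F(L^\infty)$, I would recover an inducing policy $\pi$ from $M$ as above and let $\widetilde M^{(n)}$ be the flow induced by $\pi$ under the frozen kernels $P_t(\cdot|\cdot,\cdot,L^{(n)}_t)$. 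Because the induced flow is a finite composition of the (continuously varying) transition matrices, $\widetilde M^{(n)}\in F(L^{(n)})$ and $\widetilde M^{(n)}\to M$. Optimality of $M^{(n)}$ then gives $\sum_{t,s,a}M^{(n)}_t(s,a)r_t(s,a,L^{(n)}_t)\ge \sum_{t,s,a}\widetilde M^{(n)}_t(s,a)r_t(s,a,L^{(n)}_t)$, and letting $n\to\infty$ with the continuity of $r$ in $L$ yields optimality of $M^\infty$ against $M$; as $M$ was arbitrary, $M^\infty\in\Phi(L^\infty)$.

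With all Kakutani hypotheses verified, $\Phi$ admits a fixed point $L^\star$, and unpacking the definition of $\Phi$ produces an optimal policy $\pi^\star$ for $\mathcal{M}(L^\star)$ whose induced flow is $L^\star$, establishing a Nash equilibrium. I expect the closed-graph verification — specifically the approximation construction delivering lower hemicontinuity of the feasibility correspondence — to be the main obstacle; everything else (compactness, the linear-reformulation convexity argument, and feasibility of the uniform policy) is routine. This is essentially the Kakutani argument of \cite{saldi2018markov,cui2021approximately}, adapted by tracking the joint state-action flow $L_t\in\Delta(\mathcal{S}\times\mathcal{A})$ rather than the state marginal.
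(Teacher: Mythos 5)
Your proposal is correct and follows exactly the route the paper indicates: the paper gives no detailed proof, stating only that existence follows from the Kakutani fixed-point theorem as in \cite{saldi2018markov,cui2021approximately} with the state flow replaced by the joint state-action flow, which is precisely the argument you carry out. Your occupation-measure reformulation of the best-response correspondence (giving convexity via the maximizing face of the polytope $F(L)$) and the policy-based approximation for lower hemicontinuity are the standard ways to fill in the details, and both are sound.
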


Having established the existence of Nash equilibrium solution for the mean-field game (\eqref{setup:MFG} and \eqref{consistency_pop_flow}), the rest of the paper focuses on proposing a new analytical approach to find Nash equilibrium solution(s) for the mean-field game.  
This is motivated by  the well-known fact that  most mean-field games have  multiple Nash equilibrium solutions and are neither contractive nor (weakly) monotone, as the following simple example illustrates.

\paragraph{Example of multiple Nash equilibrium solutions.} 
Take $\mathcal{S}=\mathcal{A}=\{1,\dots,n\}$ and an arbitrary initial distribution $\mu_0$. At time $0$, for any $L_0\in\Delta(\mathcal{S}\times\mathcal{A})$ and any $i,j=1,\dots,n$, the rewards $r_0(i,j,L_0)=0$, and the transitions are deterministic 
such that 
%{\color{blue}$P_0(i'|i,j,L_0)=\frac{\mathbf{1}_{\{i'=j\}}+C_{0}\|L_0-\mathbf{1}_{\{(s,a)|s=i,a=i\}}\|_2^2}{1+nC_{0}\|L_0-\mathbf{1}_{\{(s,a)|s=i,a=i\}}\|_2^2}$ for some $C_0\geq 0$.}  
$P_0(i'|i,j,L_0)=\mathbf{1}_{\{i'=j\}}$. 
%, \ie, $P_0(j|i,j,L_0)=1$ and $P_0(i'|i,j,L_0)=0$ for all $i'\neq j$. 
When $t>0$, the rewards and the transition probabilities may depend on $L_t$. Specifically, if the population congregates at the same state $i$ and chooses to stay, \textit{i.e.}, $L_t(s,a)=\mathbf{1}_{\{s=i,a=i\}}$, then the rewards $r_t(i,j,L_t)=r^i\mathbf{1}_{\{i=j\}}$ for some $r^i>0$, and the transition probabilities will be deterministic $P_t(i'|i,j,L_t)=\mathbf{1}_{\{i'=j\}}$. If $L_t$ deviates from state $i$ and action $i$, the rewards and transition probabilities will be adjusted as follows: %(here for $t=1,\dots,T$, $c_{t}\geq0$ are some arbitrary constants):

\begin{itemize}
    %\item $r_0(i,j,L_0)=0$, \quad$\forall i,j=1,\dots,n$;
    \item $r_t(i,j,L_t)=\mathbf{1}_{\{i=j\}}(r^i-r^i\|L_t-\mathbf{1}_{\{(s,a)|s=i,a=i\}}\|_2^2/2)$, for any $i,j=1,\dots,n$ and  $t=1,\dots,T$.\footnote{Here for a totally ordered finite set $\mathcal{X}$, we use $\mathbf{1}_{\mathcal{X}}$ to denote the binary vector in $\mathbb{R}^{|\mathcal{X}|}$ which has entry value $1$ for those indices $x\in\mathcal{X}$ and has entry value $0$ otherwise.}
    %\item   $P_0(i'|i,j,L_0)=\mathbf{1}_{\{i'=j\}}$. 
    %$P_0(j|i,j,L_0)=1$ and $P_0(j'|i,j,L_0)=0$, $\forall  i,j, j'=1,\dots,n$ (with $j\neq j'$) and any mean-field $L_0$.
    \item $P_t(i'|i,j,L_t)=\dfrac{\mathbf{1}_{\{i'=j\}}+C_{t}\|L_t-\mathbf{1}_{\{(s,a)|s=i,a=i\}}\|_2^2}{1+nC_{t}\|L_t-\mathbf{1}_{\{(s,a)|s=i,a=i\}}\|_2^2}$ for any $i',i,j=1,\dots,n$ and {\color{black}$t=1,\dots,T$}.
\end{itemize}
Here {\color{black}$C_t$ $(t=1,\dots,T)$} are some non-negative constants.

It is easy to verify that for any 
{\color{black}$j^\star\in\text{argmax}_{i=1,\dots,n}r^i$}, %such that $r_{\max}^{j^\star}=\max_{i=1,\dots,n}r_{\max}^i$, 
if we define \begin{itemize}
    \item $\pi_t(a|s)=\mathbf{1}_{\{a=j^\star\}}$ for any $s,a=1,\dots,n$ and $t=0,\dots,T$;
    \item $L_0(s,a)=\mu_0(s)\pi_0(a|s)$ and $L_t(s,a)=\mathbf{1}_{\{s=j^\star,a=j^\star\}}$, for any $s,a=1,\dots,n$ and $t=1,\dots,T$,
\end{itemize} 
then $\pi=\{\pi_t\}_{t\in\mathcal{T}}$ and $L=\{L_t\}_{t\in\mathcal{T}}$ constitute a Nash equilibrium solution. Thus there are several distinct Nash equilibrium solutions with both $\pi$ and $L$ being different, defined by the index {\color{black}$j^\star\in \text{argmax}_{i=1,\dots,n}r^i$} as $\pi^{(j^\star)}$ and $L^{(j^\star)}$. As contractivity and strict monotonicity imply  the uniqueness of the Nash equilibrium solution,  the mean-field game above is neither contractive nor strictly monotone. 
To see that it is not weakly monotone either, consider any two mean-field flows $L_t^{(1)}=L_t^{(j_1^\star)}$ and $L_t^{(2)}=L_t^{(j_2^\star)}$ with {\color{black}$j_1^\star\neq j_2^\star\in\text{argmax}_{i=1,\dots,n}r^i$}. Then
\begin{equation*}
\begin{split}
&\sum_{t=0}^T\sum_{s=1}^n\sum_{a=1}^n(L_t^{(1)}(s,a)-L_t^{(2)}(s,a))(r_t(s,a,L_t^{(1)})-r_t(s,a,L_t^{(2)}))\\
&={\color{black}T(r^{j_1^\star}+r^{j_2^\star})=2T\max\nolimits_{i=1,\dots,n}r^i}>0, %=T\max_{i=1,\dots,n}r_{\max}^i>0,
\end{split}
\end{equation*}
violating the requirement of weak monotonicity. %(which requires the sum above to be smaller or equal to 0). 

We now propose a new analytical framework that enables finding multiple Nash equilibrium solutions,  without either the   restrictive uniqueness assumption of the Nash equilibrium solution or   contractivity \cite{guo2019learning,cui2021approximately} and monotonicity \cite{perrin2020fictitious,perolat2021scaling} conditions.

\section{MF-OMO: Optimization formulation of mean-field games}\label{MFG_opt_idea} %-based algorithms for solving MFGs}
To overcome the restrictive assumptions of contractivity and monotonicity for analyzing mean-field games, we first establish a new optimization framework
to study mean-field games, via two steps. %The key idea is to introduce occupation measures to translate finding MFNEs into solving an optimization problem. %The key idea is to translate finding MFNEs into solving an optimization problem with bounded variables and simple convex constraints. 
The first is an optimization reformulation of mean-field games with potentially unbounded variables and nonconvex constraints, which relies on the equivalence between an MDP problem and a linear program of \textit{occupation measures}. The second is to utilize a solution modification technique to obtain the final form of the optimization formulation with bounded variables and simple convex constraints. 
%and consists of two technical components, with the first being , and the second being 
%is the translation of 
%consistency constraint in MFG via the occupation measure.
We call this framework  \textbf{M}ean-\textbf{F}ield \textbf{O}ccupation \textbf{M}easure \textbf{O}ptimization (MF-OMO).

\paragraph{Occupation measure.} 
To start, let us  introduce a new variable $d_t(s,a)$ for any $t\in\mathcal{T}$, $s\in \mathcal{S},a\in\mathcal{A}$, which represents the  occupation measure of the representative agent under some policy sequence $\pi=\{\pi_t\}_{t\in\mathcal{T}}$ in $\mathcal{M}(L)$, the mean-field induced MDP,
\textit{i.e.}, $d_t(s,a)=\mathbb{P}(s_t=s,a_t=a)$, with $s_0\sim\mu_0$, $s_{t+1}\sim P_t(\cdot|s_t,a_t,L_t)$, $a_t\sim \pi_t(\cdot|s_t)$ for $t=0,\dots,T-1$. 
Given the occupation measure $d=\{d_t\}_{t\in\mathcal{T}}$, define a mapping $\Pi$ that retrieves the policy from the occupation measure. This set-valued mapping $\Pi$ maps from a sequence $\{d_t\}_{t\in\mathcal{T}}\subseteq\Delta(\mathcal{S}\times\mathcal{A})$ to a set of policy sequences 
    $\{\pi_t\}_{t\in\mathcal{T}}$:  %\subseteq\{\pi:\mathcal{S}\rightarrow\Delta(\mathcal{A})\}$: 
    for any $\{d_t\}_{t\in\mathcal{T}}\subseteq\Delta(\mathcal{S}\times\mathcal{A})$, $\pi\in\Pi(d)$ if and only if
    \begin{equation}\label{eq:relation}
        \pi_t(a|s)=\frac{d_t(s,a)}{\sum_{a'\in\mathcal{A}}d_t(s,a')}
    \end{equation}
    when $\sum_{a'\in\mathcal{A}}d_t(s,a')>0$, and $\pi_t(\cdot|s)$ is an arbitrary probability vector in $\Delta(\mathcal{A})$ when  $\sum_{a'\in\mathcal{A}}d_t(s,a')=0$.  

\paragraph{Linear program formulation of MDPs.}
Next, consider any finite-horizon MDP $\mathcal{M}$ (\eg, the mean-field induced MDP $\mathcal{M}(L)$) with a finite state space $\mathcal{S}$, a finite action space $\mathcal{A}$, an initial state distribution $\mu_0$, a reward function $r_t(s,a)$, and dynamics $P_t(s'|s,a)$ for $t=0,\dots,T$, $s,s'\in\mathcal{S}$ and $a\in\mathcal{A}$. For brievity, we use $\mathbb{P}^{\pi}(\cdot)$  for the state and/or action distribution generated by %conditioning on 
$s_0\sim\mu_0$, $s_{t+1}\sim P_t(\cdot|s_t,a_t)$, $a_t\sim\pi_t(\cdot|s_t)$ for $t=0,\dots,T-1$. 

Then, our first component of MF-OMO relies on a linear program formulation of an MDP problem, as follows.  
 \begin{lemma}\label{prop:MDP_LP}
Suppose that $\pi=\{\pi_t\}_{t\in\mathcal{T}}$ is an $\epsilon$-suboptimal policy for the MDP $\mathcal{M}$. Define $\mu_t(s):=\mathbb{P}^{\pi}(s_t=s)$ and $d_t(s,a):=\mu_t(s)\pi_t(a|s)$ for $s\in\mathcal{S}$ and $a\in\mathcal{A}$. Then $\pi\in\Pi(d)$ and $d=\{d_t\}_{t\in\mathcal{T}}$ is a feasible $\epsilon$-suboptimal solution to the following linear program:
   \begin{align}
       \text{maximize}_{d}\quad &\sum_{t\in\mathcal{T}}\sum_{s\in\mathcal{S}}\sum_{a\in\mathcal{A}} d_t(s,a)r_t(s,a) \notag\\
       \text{subject to} \quad &\sum_{s\in\mathcal{S}}\sum_{a\in\mathcal{A}}d_t(s,a)P_t(s'|s,a)=\sum_{a\in\mathcal{A}}d_{t+1}(s',a), \quad\forall s'\in\mathcal{S},t\in\mathcal{T}\backslash\{T\},\notag\\
      % &\sum_{s\in\mathcal{S}}\sum_{a\in\mathcal{A}}d_t(s,a)=1,\quad \forall t\in\mathcal{T},\notag\\
     &\sum_{a\in\mathcal{A}}d_0(s,a)=\mu_0(s), \quad \forall s\in\mathcal{S},\notag\\
       &d_t(s,a)\geq 0,\quad \forall s\in\mathcal{S},a\in\mathcal{A},t\in\mathcal{T}.\notag
    \end{align}
Conversely, suppose that $d=\{d_t\}_{t\in\mathcal{T}}$ is a feasible $\epsilon$-suboptimal solution to the above linear program. Then for any $\pi\in\Pi(d)$ defined by \eqref{eq:relation},  %$\pi_t(a|s):=\frac{d_t(s,a)}{\sum_{a'\in\mathcal{A}}d_t(s,a')}$ when $\sum_{a'\in\mathcal{A}}d_t(s,a')>0$ and let $\pi_t(\cdot|s)$ be any probability vector when $\sum_{a'\in\mathcal{A}}d_t(s,a')=0$, 
 $\pi=\{\pi_t\}_{t\in\mathcal{T}}$ is an $\epsilon$-suboptimal policy for the MDP $\mathcal{M}$. In addition, the value of the above linear program is equal to the value of the MDP $\mathcal{M}$.
 %, and again the corresponding objective value is equal to the optimal value of the MDP.
\end{lemma}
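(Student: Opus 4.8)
The plan is to establish the two asserted directions of the correspondence between policies and occupation measures, and then to read off both the value identity and the preservation of $\epsilon$-suboptimality as corollaries. Write $V_{\mu_0}^{\pi}$ for the expected total reward of $\pi$ in $\mathcal{M}$ and $V_{\mu_0}^{\star}$ for the optimal such value. The single computation I will reuse is that, whenever $d$ is genuinely the occupation measure of a policy $\pi$ in $\mathcal{M}$, the LP objective equals the policy value, namely $\sum_{t\in\mathcal{T}}\sum_{s,a} d_t(s,a) r_t(s,a) = \sum_{t\in\mathcal{T}} \mathbb{E}^{\pi}[r_t(s_t,a_t)] = V_{\mu_0}^{\pi}$.

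For the forward direction, given $\mu_t(s) = \mathbb{P}^{\pi}(s_t=s)$ and $d_t(s,a) = \mu_t(s)\pi_t(a|s)$, I would verify feasibility directly. Nonnegativity is immediate, and the initial constraint holds because $\mu_0(s) = \mathbb{P}^{\pi}(s_0=s)$ and $\sum_{a}\pi_0(a|s)=1$. The flow constraints are nothing but the law of total probability for the one-step dynamics: marginalizing over $(s,a)$ gives $\sum_{s,a} d_t(s,a) P_t(s'|s,a) = \mathbb{P}^{\pi}(s_{t+1}=s') = \mu_{t+1}(s') = \sum_{a} d_{t+1}(s',a)$. The membership $\pi\in\Pi(d)$ follows from $\sum_{a'} d_t(s,a') = \mu_t(s)$, so that \eqref{eq:relation} returns $\pi_t(a|s)$ exactly on states with $\mu_t(s)>0$ and is unconstrained otherwise.

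The converse direction is where the main work lies. Given a feasible $d$ and any $\pi\in\Pi(d)$, the key claim is that the occupation measure $\hat d$ actually generated by running $\pi$ in $\mathcal{M}$ satisfies $\hat d = d$; I would prove this by induction on $t$. Writing $m_t(s) = \sum_{a} d_t(s,a)$ for the state marginal, the base case uses the initial constraint $m_0 = \mu_0$ together with \eqref{eq:relation}: on states with $m_0(s)>0$ the conditional split matches $\pi_0(\cdot|s)$, while states with $m_0(s)=0$ force $d_0(s,\cdot)=0$ by nonnegativity and contribute nothing. For the inductive step, assuming $\hat d_t = d_t$, the propagated marginal is $\mathbb{P}^{\pi}(s_{t+1}=s') = \sum_{s,a} d_t(s,a) P_t(s'|s,a)$, which the flow constraint equates to $m_{t+1}(s')$; applying \eqref{eq:relation} once more, with the same zero-mass convention, gives $\hat d_{t+1}=d_{t+1}$. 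The delicate point throughout is precisely the handling of states of zero occupation mass, where the policy returned by $\Pi$ is arbitrary: the argument goes through because the flow constraints pin down the correct state marginals while the zero-mass states never enter any sum with positive weight. Once $\hat d = d$, the value identity above yields $V_{\mu_0}^{\pi} = \sum_{t,s,a} d_t(s,a) r_t(s,a)$.

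Finally, the value equality and the $\epsilon$-suboptimality transfer are consequences of these two directions. Applying the forward direction to an optimal policy produces a feasible $d$ whose LP objective equals $V_{\mu_0}^{\star}$, so the LP value is at least $V_{\mu_0}^{\star}$; applying the converse to an arbitrary feasible $d$ produces a policy whose value equals the LP objective at $d$ and hence is at most $V_{\mu_0}^{\star}$, so the LP value is at most $V_{\mu_0}^{\star}$. Thus the two optimal values coincide. Since in either direction the constructed object has LP objective (resp.\ policy value) exactly equal to the given policy value (resp.\ LP objective), the suboptimality gap to the common optimum $V_{\mu_0}^{\star}$ is preserved verbatim, so $\epsilon$-suboptimality of the input is equivalent to $\epsilon$-suboptimality of the output. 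I expect the induction in the converse direction, and in particular the bookkeeping at zero-mass states, to be the only genuinely nontrivial step.
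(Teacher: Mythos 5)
Your proof is correct. The paper does not actually spell out a proof of this lemma --- it only remarks that the argument is ``a suitable modification'' of the well-known occupation-measure LP correspondence for discounted and average-reward MDPs --- and what you have written is exactly that standard argument carried out for the finite-horizon case: feasibility and objective-value matching in the forward direction, an induction showing $\hat d = d$ (with the zero-mass states handled by nonnegativity forcing $d_t(s,\cdot)=0$) in the converse, and the value identity plus gap preservation read off as corollaries. The one step worth making explicit is that the forward direction is applied to an exactly optimal policy, whose existence in a finite-horizon finite MDP is guaranteed by dynamic programming; with that noted, the argument is complete.
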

The proof  of Lemma \ref{prop:MDP_LP} is a suitable modification 
for its well-known counterpart in the discounted and average reward settings  \cite{de2003linear, huang1997lp, wang2016online}.

\paragraph{A preliminary version of MF-OMO.}%optimization framework for MFGs.} %Occupation measure optimization formulation for MFGs.}
We now %The first step of MF-OMO is  to 
show that  $\pi=\{\pi_t\}_{t\in\mathcal{T}}$ is a Nash equilibrium solution if and only if there exist $d^\star=\{d_t^\star(s,a)\}_{t\in\mathcal{T}}$ and  $L=\{L_t(s,a)\}_{t\in\mathcal{T}}$, such that $\pi\in\Pi(d^\star)$ and the following two conditions  hold.

%We first introduce a {\color{red}feasibility?} optimization problem on $d=\{d_t(s,a)\}_{t\in\mathcal{T}}$ and $L=\{L_t(s,a)\}_{t\in\mathcal{T}}$ with three sets of conditions, which we will later show equivalent to solving the NE.
\begin{itemize}
    \item[\bf (A) ] Optimality of the representative agent: $d^\star=\{d_t^\star(s,a)\}_{t\in\mathcal{T}}$ solves the following LP problem 
    \begin{align}
       \text{maximize}_{d}\quad &\sum_{t\in\mathcal{T}}\sum_{s\in\mathcal{S}}\sum_{a\in\mathcal{A}} d_t(s,a)r_t(s,a,L_t) \\
       %\notag  \ \ \ \ \ \ \ \ \ \ \ \ \ \ \ \ \ \ \ \   (LPOPT(L))\\
      \text{subject to} \quad &\sum_{s\in\mathcal{S}}\sum_{a\in\mathcal{A}}d_t(s,a)P_t(s'|s,a,L_t)=\sum_{a\in\mathcal{A}}d_{t+1}(s',a),
            \label{cons_occ} \\ &\hspace{4.2cm}\forall s'\in\mathcal{S},t\in\mathcal{T}\backslash\{T\},\notag\\ 
            %=0,1,\dots,T-1,
    %   \label{cons_occ} \\
      % &\sum_{s\in\mathcal{S}}\sum_{a\in\mathcal{A}}x_t(s,a)=1,\quad \forall t\in\mathcal{T},\notag\\
      &\sum_{a\in\mathcal{A}}d_0(s,a)=\mu_0(s), \quad\forall s\in\mathcal{S},\notag\\
       &d_t(s,a)\geq 0,\quad \forall s\in\mathcal{S},a\in\mathcal{A},t\in\mathcal{T}.\notag
    \end{align}
    
        \item[\bf (B)] Consistency between the agent and the mean-field: %\begin{equation}
$d_t^\star(s,a)=L_t(s,a)$ for any $s\in\mathcal{S},a\in\mathcal{A},t\in\mathcal{T}$.
\end{itemize}
Here condition (A) corresponds to the optimality condition 1) in Definition \ref{mfne} according to Lemma \ref{prop:MDP_LP}. Condition (B) indicates that the occupation measure of the single agent matches the mean-field flow of the population, so that the agent is indeed \textit{representative}. Condition (B), when combined with condition (A), in fact implies the consistency condition 2) in Definition \ref{mfne}. Formally, we have the following lemma.
\begin{lemma}\label{condition_A_B_prime}
If $(\pi,L)$ is a Nash equilibrium solution of the mean-field game {\rm(}\eqref{setup:MFG} and \eqref{consistency_pop_flow}{\rm)}, then there exists $d$ such that $\pi\in\Pi(d^\star)$ and $(d^\star,L)$ with $d^\star=L$ satisfying conditions (A) and (B). On the other hand, if $(d^\star,L)$ satisfies conditions (A) and (B), 
%\ie, $d=L$ solves $\text{LPOPT}(L)$,   
then for any $\pi\in\Pi(L)$,  %$\pi_t(a|s)=\frac{L_t(s,a)}{\sum_{a'\in\mathcal{A}}L_t(s,a')}$ when $\sum_{a'\in\mathcal{A}}L_t(s,a')>0$ and let $\pi_t(\cdot|s)$ be any probability vector if $\sum_{a'\in\mathcal{A}}L_t(s,a')=0$, 
 $(\pi,L)$ is a Nash equilibrium solution of the mean-field game {\rm(}\eqref{setup:MFG} and \eqref{consistency_pop_flow}{\rm)}.
\end{lemma}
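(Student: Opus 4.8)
The plan is to prove both implications by tracking a single object: the occupation measure of the policy $\pi$ in the mean-field induced MDP $\mathcal{M}(L)$, namely $d_t(s,a):=\mathbb{P}^{\pi}(s_t=s,a_t=a)$ generated by $s_0\sim\mu_0$, $a_t\sim\pi_t(\cdot|s)$ and $s_{t+1}\sim P_t(\cdot|s_t,a_t,L_t)$. The unifying observation is that the flow constraints \eqref{cons_occ} of the linear program in condition (A), once combined with the policy-recovery relation \eqref{eq:relation}, are exactly the forward recursion \eqref{consistency_def} encoding the consistency condition 2) of Definition \ref{mfne}. Both directions then reduce to bookkeeping around this identification together with Lemma \ref{prop:MDP_LP}.

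For the forward direction, suppose $(\pi,L)$ is a Nash equilibrium and let $d^\star$ be the occupation measure of $\pi$ in $\mathcal{M}(L)$. The consistency condition 2) states precisely that $L_t=\mathbb{P}_{s_t,a_t}$ along this trajectory, that is $d^\star_t=L_t$ for every $t$, which is condition (B). Optimality condition 1) says $\pi$ is optimal for $\mathcal{M}(L)$, so by Lemma \ref{prop:MDP_LP} (with $\epsilon=0$) its occupation measure $d^\star$ solves the linear program in condition (A). Finally, writing $d^\star_t(s,a)=\mu_t(s)\pi_t(a|s)$ with $\mu_t(s)=\sum_{a'}d^\star_t(s,a')$ shows that \eqref{eq:relation} holds, giving $\pi\in\Pi(d^\star)$.

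For the converse, assume $(d^\star,L)$ satisfies (A) and (B), and fix any $\pi\in\Pi(L)=\Pi(d^\star)$. Optimality condition 1) is immediate from the converse part of Lemma \ref{prop:MDP_LP}: since $d^\star$ solves the linear program of $\mathcal{M}(L)$, every $\pi\in\Pi(d^\star)$ is optimal for $\mathcal{M}(L)$. The remaining work is to verify consistency condition 2), i.e. that $L_t(s,a)=\mathbb{P}^{\pi}(s_t=s,a_t=a)$, and I would do this by induction on $t$. The base case uses $\sum_a d^\star_0(s,a)=\mu_0(s)$ together with \eqref{eq:relation} to get $d^\star_0(s,a)=\mu_0(s)\pi_0(a|s)=\mathbb{P}^{\pi}(s_0=s,a_0=a)$. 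For the step, the flow constraint \eqref{cons_occ} identifies the state marginal $\sum_a d^\star_{t+1}(s',a)$ with the pushforward $\sum_{s,a}d^\star_t(s,a)P_t(s'|s,a,L_t)=\mathbb{P}^{\pi}(s_{t+1}=s')$, and then \eqref{eq:relation} splits this marginal as $d^\star_{t+1}(s',a')=\mathbb{P}^{\pi}(s_{t+1}=s')\pi_{t+1}(a'|s')$, closing the induction since $d^\star=L$ by (B).

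The main subtlety lies at states carrying zero mass, where $\sum_{a'}d^\star_t(s,a')=0$ and the policy $\pi_t(\cdot|s)$ returned by $\Pi$ is arbitrary. There \eqref{eq:relation} supplies no information, so the induction must instead invoke nonnegativity of $d^\star$: a vanishing marginal forces $d^\star_t(s,a)=0$ for all $a$, which matches $\mathbb{P}^{\pi}(s_t=s,a_t=a)=0$ regardless of the arbitrary policy choice, so consistency holds entry-wise. Checking that this boundary case is compatible with the inductive step, rather than the generic positive-mass computation, is the one point that genuinely requires care.
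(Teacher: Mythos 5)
Your proposal is correct and follows essentially the same route as the paper: both directions lean on Lemma \ref{prop:MDP_LP} for the optimality part and on the identification of the flow constraints \eqref{cons_occ} with the recursion \eqref{consistency_def} for the consistency part, with the only cosmetic difference being that you read condition (B) directly off the probabilistic statement \eqref{consistency_pop_flow} where the paper spells it out by induction. Your explicit treatment of the zero-mass states, where $\Pi$ returns an arbitrary $\pi_t(\cdot|s)$, is a point the paper's converse argument passes over silently, and it is handled correctly here.
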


To get the preliminary version of MF-OMO, notice that the linear program in condition (A) can be rewritten as
\begin{equation}\label{cond_A_rewrite}
    \begin{array}{ll}
        \text{minimize}_d &c_L^\top d\\
        \text{subject to} &A_Ld=b,\quad d\geq0,
    \end{array}
    \end{equation}
    where $c_L\in\mathbb{R}^{SA(T+1)}$ and $b\in\mathbb{R}^{ST+S}$ are %$A_L\in\mathbb{R}^{S(T+1)\times SA(T+1)}$ are defined by %we have
    \begin{equation}\label{kkt_param1}
    % c_L=[-r_0(\cdot,\cdot,L_0), \dots,-r_T(\cdot,\cdot,L_T)]\in\mathbb{R}^{SA(T+1)},\quad b=[0,\dots,0,\mu_0]\in\mathbb{R}^{ST+S},
    c_L=\left[
\begin{array}{c}
-r_0(\cdot,\cdot,L_0)\\
\vdots\\
-r_T(\cdot,\cdot,L_T)
\end{array}
\right],\quad
b=\left[
\begin{array}{c}
0\\
\vdots\\
0\\
\mu_0
\end{array}
\right],
    \end{equation}
    %\todo[inline]{ $c_L\in\mathbb{R}^{|\mathcal{S}||\mathcal{A}|(T+1)}$}
    $r_t(\cdot,\cdot,L_t)\in\mathbb{R}^{SA}$ is a flattened vector (with column-major order),  %of a matrix in $\mathbb{R}^{S\times A}$, 
    and the matrix $A_L\in\mathbb{R}^{S(T+1)\times SA(T+1)}$ is
    % proposed below.
%XXX. A simpler representation of $A_L$ that facilitates both coding and theoretical derivations. 
\begin{equation}\label{kkt_param2}
A_L=\left[
\begin{array}{ccccccc}
   W_0(L_0)  & -Z &  0 & 0 & \cdots &0 & 0\\
   0 & W_1(L_1)  & -Z &  0 & \cdots &0 & 0\\ 
   0 & 0 & W_2(L_2) & -Z & \cdots & 0 &0 \\
   \vdots & \vdots & \vdots & \vdots & \ddots & \vdots &\vdots \\
   0 & 0 & 0 & 0 & \cdots & W_{T-1}(L_{T-1}) & -Z\\
   Z & 0 & 0 & 0 & \cdots & 0 & 0
\end{array}\right].
\end{equation}
Here %$r_t(\cdot,\cdot,L_t)\in\mathbb{R}^{SA}$ is a flattened vector (with column-major order),
$W_t(L_t)\in \mathbb{R}^{S\times SA}$ is the matrix with the $l$-th row ($l=1,\dots,S$) being the  flattened vector  $[P_t(l|\cdot,\cdot,L_t)]\in\mathbb{R}^{SA}$ (with column-major order),  
and the matrix $Z$ is defined as 
\begin{equation}\label{eq:defn-Z}
 Z:=[\overbrace{I_{S},\dots,I_{S}]}^{A}\in \mathbb{R}^{S\times SA},  
\end{equation}
where $I_S$ is the identity matrix with dimension $S$. In addition, the variable $d$ is also flattened/vectorized in the same order as $c_L$. Accordingly, hereafter both $d^\star=\{d^\star_t\}_{t\in\mathcal{T}}$ and $L=\{L_t\}_{t\in\mathcal{T}}$ are viewed as flattened vectors (with column-major order) in $\mathbb{R}^{SA(T+1)}$ or a sequence of $(T+1)$ flattened vectors (with column-major order) in $\mathbb{R}^{SA}$, depending on the context, and we use $L_t(s,a)$ and $L_{s,a,t}$ (resp. $d_t(s,a)$ and $d_{s,a,t}$) alternatively.

By the strong duality of linear program \cite{luenberger1984linear}, $\{d_t^\star(s,a)\}$ is the optimal solution to the above linear program \eqref{cond_A_rewrite} if and only if the following KKT conditions hold for $d^\star$ and some $y$ and $z$:
\begin{equation}\label{eq:kkt}
\begin{split}
        &A_Ld^\star=b,\quad A_L^\top y+z=c_L,\\
        & z^\top d^\star=0,\quad d^\star\geq 0,\quad z\geq 0.
        \end{split}
    \end{equation}

Now, combining condition (A) (in the form of equation \eqref{eq:kkt}) with condition (B), we have the following preliminary version of MF-OMO.

%Now we show the equivalence between the solution to the above feasibility optimization problem and the Nash equilibrium.
\begin{theorem}  \label{thm:nash-opt}
Finding a Nash equilibrium solution of the mean-field game  {\rm(}\eqref{setup:MFG} and \eqref{consistency_pop_flow}{\rm)} is equivalent to solving the following %nonlinear equations with variables $y,z,L$: 
constrained optimization problem:
    \begin{equation}\label{mf-vmo-constr}
    \begin{array}{ll}
    \text{\rm minimize}_{y,z,L} \quad &0\\
     \text{\rm subject to}  &A_LL=b,\quad A_L^\top y+z=c_L,\\
        & z^\top L=0,\quad L\geq 0,\quad z\geq 0,
        \end{array}
    \end{equation}
%         \begin{equation}\label{mf-vmo-constr}
%     \begin{array}{l}
%  A_LL=b,\quad A_L^\top y+z=c_L,\\
%          z^\top L=0,\quad L\geq 0,\quad z\geq 0,
%         \end{array}
%     \end{equation}
where $A_L,c_L,b$ are set as \eqref{kkt_param1} and \eqref{kkt_param2}.
Specifically, if $(\pi,L)$ is a Nash equilibrium solution of the mean-field game {\rm(}\eqref{setup:MFG} and \eqref{consistency_pop_flow}{\rm)}, then there exist some $y,z$ such that $(y,z,L)$ solves the constrained optimization problem \eqref{mf-vmo-constr}. On the other hand, if $(y,z,L)$ solves the constrained optimization problem \eqref{mf-vmo-constr}, then for any $\pi\in \Pi(L)$,  %$\pi_t(a|s)=\frac{L_t(s,a)}{\sum_{a'\in\mathcal{A}}L_t(s,a')}$ when $\sum_{a'\in\mathcal{A}}L_t(s,a')>0$ and let $\pi_t(\cdot|s)$ be any probability vector if $\sum_{a'\in\mathcal{A}}L_t(s,a')=0$, 
$(\pi,L)$ is a Nash equilibrium solution of the mean-field game  {\rm(}\eqref{setup:MFG} and \eqref{consistency_pop_flow}{\rm)}.
\end{theorem}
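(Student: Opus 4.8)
The plan is to chain the two characterizations already prepared in the excerpt: Lemma \ref{condition_A_B_prime}, which reduces a Nash equilibrium to the pair of conditions (A) and (B), and the KKT system \eqref{eq:kkt}, which encodes optimality for the linear program of condition (A) through strong duality. The crucial simplification is that condition (B) identifies $d^\star$ with $L$; substituting $d^\star = L$ into \eqref{eq:kkt} produces exactly the constraint set of \eqref{mf-vmo-constr}. Since the objective of \eqref{mf-vmo-constr} is the constant $0$, ``solving'' it means precisely finding a feasible triple $(y,z,L)$, so the theorem amounts to showing that feasibility of \eqref{mf-vmo-constr} is equivalent to the Nash property.

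For the forward direction, I would begin with a Nash equilibrium $(\pi, L)$ and invoke Lemma \ref{condition_A_B_prime} to obtain $d^\star$ with $\pi \in \Pi(d^\star)$ satisfying (A) and (B), so that $d^\star = L$. Condition (A) states that $L = d^\star$ is optimal for the linear program \eqref{cond_A_rewrite}. I would then apply strong duality of linear programming to produce multipliers $y, z$ for which the KKT conditions \eqref{eq:kkt} hold at $d^\star = L$; rewriting these with the identification $d^\star = L$ gives $A_L L = b$, $A_L^\top y + z = c_L$, $z^\top L = 0$, $L \geq 0$, $z \geq 0$, i.e., $(y,z,L)$ is feasible for \eqref{mf-vmo-constr}.

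For the converse, I would take any feasible $(y,z,L)$ for \eqref{mf-vmo-constr}. Its constraints are verbatim the KKT system \eqref{eq:kkt} with $d^\star$ set to $L$. By the sufficiency of the KKT conditions for linear programs (weak duality together with complementary slackness), $L$ is then optimal for \eqref{cond_A_rewrite}, which is condition (A) with $d^\star = L$; condition (B) holds trivially by this same identification. The second half of Lemma \ref{condition_A_B_prime} then yields that, for every $\pi \in \Pi(L)$, the pair $(\pi, L)$ is a Nash equilibrium solution.

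I expect no serious obstacle, since the statement is largely a repackaging of Lemma \ref{condition_A_B_prime} and the duality relation \eqref{eq:kkt} once $d^\star$ and $L$ are merged. The only point needing care is justifying that strong duality applies in the forward direction, which requires the linear program \eqref{cond_A_rewrite} to be feasible and bounded for the given $L$. Feasibility holds because every policy induces an occupation measure; boundedness follows because the flow and initial constraints force $\sum_{s,a} d_t(s,a) = 1$ for each $t$, confining each $d_t$ to the probability simplex, while $r_{\max} < \infty$ keeps the objective finite. The converse direction needs only sufficiency of the KKT conditions, which holds unconditionally for linear programs.
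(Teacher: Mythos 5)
Your proposal follows exactly the paper's route: the paper obtains Theorem \ref{thm:nash-opt} by combining Lemma \ref{condition_A_B_prime} with the KKT characterization \eqref{eq:kkt} of the linear program \eqref{cond_A_rewrite} via strong duality, and then identifying $d^\star$ with $L$ through condition (B). Your added remarks on feasibility and boundedness of the linear program (needed for strong duality) are correct and fill in a detail the paper leaves implicit.
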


{\color{black}  
In the above derivation, 
    we have used  $L$ to represent the population mean-field flows and $d$ to denote the occupation measure of the representative agent. The optimal occupation measure (\ie, the solution to the linear program) associated with the representative agent is denoted by $d^\star$. % while $d$ is its optimal solution.
}

% \begin{remark}
% Note that when both the transitions and rewards are independent of the mean-field term $L$, the above constrained optimization problem reduces to a single-agent MDP  problem. Accordingly, the constrained optimization problem \eqref{mf-vmo-constr} reduces to XXX??? \eqref{eq:kkt}, which is the equivalent KKT conditions of the classic LP formulation of the MDP problem (\cf Lemma \ref{prop:MDP_LP}). 
% \end{remark}
\paragraph{MF-OMO.} 
%In general, even when $L$ is in the simplex, the auxiliary variables $y$ and $z$ in \eqref{mf-vmo-constr} can be unbounded. 
The above constrained optimization problem \eqref{mf-vmo-constr} in Theorem \ref{thm:nash-opt}
can be rewritten in a more computationally-efficient form  by adding bound constraints to the auxiliary variables $y$ and $z$ and moving potentially nonconvex constraints to the objective.  %Note that the boundedness of all variables is essential to the later analysis of the optimization framework in \S\ref{sec:expl-vs-mfomo} and \ref{solve_mfvmo}.  
Note that the consequent nonconvex objectives are in general much easier to handle than nonconvex constraints (\cf \eqref{mf-vmo-constr}). In addition, the boundedness of all variables is essential to the later analysis of the optimization framework in Sections \ref{sec:expl-vs-mfomo} and \ref{solve_mfvmo}. 
%We call this alternative  optimization form as MF-OMO.
\begin{theorem}  \label{thm:mfvmo-full}
Finding a Nash equilibrium solution of the mean-field game  {\rm(}\eqref{setup:MFG} and \eqref{consistency_pop_flow}{\rm)} is equivalent to solving the following optimization problem with bounded variables and simple convex constraints:
\begin{equation}\label{mfvmo-full}
\begin{array}{ll}
\text{minimize}_{y,z,L} & \|A_LL-b\|_2^2+\|A_L^\top y+z-c_L\|_2^2+z^\top L\\
\text{subject to} & L\geq 0, \quad {\color{black}\mathbf{1}^\top L_t=1, \,t\in\mathcal{T},} \\
& \mathbf{1}^\top z\leq SA(T^2+T+2)r_{\max}, \quad z\geq 0, \tag{MF-OMO}\\
& \|y\|_2\leq \frac{S(T+1)(T+2)}{2}r_{\max},
\end{array}
\end{equation}
where $A_L,c_L,b$ are set as \eqref{kkt_param1} and \eqref{kkt_param2}, and ${\bf 1}$ is the all-one vector (with appropriate dimensions).
Specifically, if $(\pi,L)$ is a Nash equilibrium solution of the mean-field game {\rm(}\eqref{setup:MFG} and \eqref{consistency_pop_flow}{\rm)}, then there exist some $y,z$ such that $(y,z,L)$ solves \eqref{mfvmo-full}. On the other hand, if $(y,z,L)$ solves \eqref{mfvmo-full} with the value of the objective function being $0$, then for any $\pi\in \Pi(L)$,  
$(\pi,L)$ is a Nash equilibrium solution of the mean-field game {\rm(}\eqref{setup:MFG} and \eqref{consistency_pop_flow}{\rm)}.
\end{theorem}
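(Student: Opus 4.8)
The plan is to read \eqref{mfvmo-full} as a ``penalized, bounded'' repackaging of the constrained feasibility system \eqref{mf-vmo-constr}, and to transfer the equivalence already established in Theorem~\ref{thm:nash-opt}. The pivotal structural remark is that, on the feasible set of \eqref{mfvmo-full}, the objective is a sum of three individually nonnegative pieces: the squared residuals $\|A_LL-b\|_2^2$ and $\|A_L^\top y+z-c_L\|_2^2$ are nonnegative by construction, while $z^\top L\ge 0$ because the constraints impose $z\ge 0$ and $L\ge 0$. Consequently a feasible triple $(y,z,L)$ attains objective value $0$ if and only if all three pieces vanish simultaneously, i.e. $A_LL=b$, $A_L^\top y+z=c_L$ and $z^\top L=0$. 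The whole theorem thus reduces to matching the zero-objective feasible points of \eqref{mfvmo-full} with the feasible points of \eqref{mf-vmo-constr}.

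I would dispatch the reverse direction first, as it is essentially immediate. If $(y,z,L)$ is feasible for \eqref{mfvmo-full} with objective value $0$, then by the remark above $A_LL=b$, $A_L^\top y+z=c_L$ and $z^\top L=0$, and the bound constraints already guarantee $L\ge 0$ and $z\ge 0$. These are precisely the constraints of \eqref{mf-vmo-constr}; the additional simplex normalization $\mathbf 1^\top L_t=1$ and the two norm bounds on $y$ and $z$ are simply extra valid inequalities that play no role here. Invoking the second half of Theorem~\ref{thm:nash-opt} then yields that $(\pi,L)$ is a Nash equilibrium solution for every $\pi\in\Pi(L)$.

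The forward direction is where the content lies. Given a Nash equilibrium solution $(\pi,L)$, Theorem~\ref{thm:nash-opt} produces some $(y,z)$ with $(y,z,L)$ satisfying the constraints of \eqref{mf-vmo-constr}, so that $(y,z,L)$ already has objective value $0$ in \eqref{mfvmo-full}; moreover the nonnegativity and normalization of $L$ are free since each $L_t\in\Delta(\mathcal{S}\times\mathcal{A})$. The genuine difficulty---and the main obstacle---is that the $(y,z)$ handed over by strong duality is only guaranteed to exist, not to be bounded: the dual-feasible set $\{(y,z):A_L^\top y+z=c_L,\ z\ge 0\}$ can be unbounded, so an arbitrary complementary certificate may violate the prescribed bounds $\|y\|_2\le \tfrac{S(T+1)(T+2)}{2}r_{\max}$ and $\mathbf 1^\top z\le SA(T^2+T+2)r_{\max}$. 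The resolution is to not accept an arbitrary certificate but to invoke the solution-modification result, Proposition~\ref{interpretation_y_z}, which replaces $(y,z)$ by the canonical certificate in which $y$ is assembled from the value functions $\{V_t^\star(L)\}_{t\in\mathcal{T}}$ of the mean-field-induced MDP $\mathcal{M}(L)$ and $z$ from the corresponding Bellman/advantage gaps; this certificate still satisfies the KKT system \eqref{eq:kkt}, hence keeps the objective at $0$.

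It then remains to verify the explicit constants, a bounded-reward bookkeeping exercise. Since $|r_t|\le r_{\max}$, every entry $[V_t^\star(L)]_s$ has magnitude at most $(T-t+1)r_{\max}$, so $\|V_t^\star(L)\|_1\le S(T-t+1)r_{\max}$; arranging the value-function blocks as $y$ and using $\|y\|_2\le\|y\|_1\le\sum_{t=0}^{T}S(T-t+1)r_{\max}=\tfrac{S(T+1)(T+2)}{2}r_{\max}$ reproduces the stated $y$-bound exactly. For $z$, each component is an advantage gap $[V_t^\star(L)]_s-Q_t^\star(s,a,L)\ge 0$, where $Q_t^\star$ is the optimal state-action value, and is controlled by reward-to-go differences; summing these over $(t,s,a)$ produces the bound $\mathbf 1^\top z\le SA(T^2+T+2)r_{\max}$. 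I expect the conceptual crux to be entirely the unboundedness issue and its resolution through Proposition~\ref{interpretation_y_z}; once the value-function certificate is in hand, checking that it satisfies \eqref{eq:kkt} and obeys the two norm bounds is routine, with only the precise counting of reward-to-go terms in the $z$-bound requiring care.
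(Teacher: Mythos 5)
Your proposal is correct and follows essentially the same route as the paper: the paper derives Theorem \ref{thm:mfvmo-full} directly from Theorem \ref{thm:nash-opt} combined with the solution modification of Proposition \ref{interpretation_y_z} and the resulting norm bounds in Corollary \ref{y_z_bound}, which is exactly your resolution of the unboundedness of the dual certificate via the value-function/Bellman-gap construction, together with the observation that the objective is a sum of nonnegative terms on the feasible set so that zero objective value is equivalent to the KKT system \eqref{eq:kkt}. No gaps.
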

% \begin{remark}
% Note that nonconvex constraints are in general much harder to handle than nonconvex objectives. In addition, the boundedness of all variables is essential to the later analysis of the optimization framework in \S\ref{sec:expl-vs-mfomo} and \ref{solve_mfvmo}. %: both in 
% \end{remark}

Theorem \ref{thm:mfvmo-full} is immediate  from the following two results, which show how the bounds on the auxiliary variables $y$ and $z$ in \eqref{mfvmo-full} are obtained. Note that only $L$ is needed to construct the Nash equilibrium solution, hence there is no loss to add additional bounds on $y$ and $z$ according to Corollary \ref{y_z_bound}. 
%To see how the bounds of $y$ and $z$ are obtained, we show the following results on  

The interpretation of $y$ and $z$ is motivated by the following remark.

\begin{proposition}[Solution modification/selection]\label{interpretation_y_z}
Suppose that $(y,z,L)$ is a solution to \eqref{mf-vmo-constr}. Define $\hat{y}$ as
\[
\hat{y}=[V_1^\star(L), V_2^\star(L), \dots,V_T^\star(L), -V_0^\star(L)]\in\mathbb{R}^{S(T+1)},
\]
where $V_t^\star(L)\in\mathbb{R}^S$ is the vector of value function starting from step $t$ for the induced MDP
%defined as %the vector of optimal expected reward-to-go/value function starting from step $t$ for the MDP
$\mathcal{M}(L)$ (\cf Section \ref{prob_setup}).  
%, \ie, $[V_t^\star(L)]_s=\max_{\pi}\mathbb{E}[\sum_{\tau=t}^Tr(s_\tau, a_\tau, L_\tau)|s_t=s,\pi]$ ($s\in\mathcal{S}$). 
In addition, define $\hat{z}=[\hat{z}_0,\hat{z}_1,\dots,\hat{z}_T]\in\mathbb{R}^{SA(T+1)}$, where $\hat{z}_t$ is the Bellman residual/gap\footnote{This is also known as the (optimal) advantage function in the reinforcement learning literature.}, \ie, $\hat{z}_t=[\hat{z}_t^1,\dots,\hat{z}_t^A]$ ($t=0,\dots,T$) with 
\[
\hat{z}_t^a=V_t^\star(L)-r_t(\cdot,a,L_t)-P_t^a(L_t)V_{t+1}^\star(L)\in\mathbb{R}^S,\quad t=0,\dots,T-1,\,a\in\mathcal{A}
\]
and $\hat{z}_T^a=V_T^\star(L)-r_T(\cdot,a,L_T)\in\mathbb{R}^S$ ($a\in\mathcal{A}$), where $P_t^a(L_t)\in\mathbb{R}^{S\times S}$ is defined by $[P_t^a(L_t)]_{s,s'}=P_t(s'|s,a,L_t)$. 
Then $(\hat{y},\hat{z},L)$ is also a solution to \eqref{mf-vmo-constr}. %Here $P_t^a(L_t)\in\mathbb{R}^{S\times S}$ is defined as $[P_t^a(L_t)]_{s,s'}=P_t(s'|s,a,L_t)$
\end{proposition}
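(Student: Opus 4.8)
The plan is to verify directly that the modified triple $(\hat y,\hat z,L)$ satisfies every constraint of \eqref{mf-vmo-constr}, namely $A_L L=b$, $A_L^\top\hat y+\hat z=c_L$, $\hat z^\top L=0$, $L\ge 0$, and $\hat z\ge 0$. Two of these, $A_L L=b$ and $L\ge 0$, involve only $L$, which is unchanged, and so are inherited from the hypothesis that $(y,z,L)$ solves \eqref{mf-vmo-constr}. It therefore remains to check the three conditions involving $\hat y$ and $\hat z$.

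First I would establish the dual-feasibility identity $A_L^\top\hat y+\hat z=c_L$ by a blockwise computation. Partitioning $\hat y$ into the $(T+1)$ blocks dictated by the row structure of $A_L$ (so that block $t$ equals $V_{t+1}^\star(L)$ for $t=0,\dots,T-1$ and the last block equals $-V_0^\star(L)$), and using the transpose identities that every length-$S$ sub-block of $Z^\top v$ equals $v$ while the $a$-th sub-block of $W_t(L_t)^\top v$ equals $P_t^a(L_t)v$, the $\tau$-th column block of $A_L^\top\hat y$ is seen to be, sub-block by sub-block in $a$, exactly $P_\tau^a(L_\tau)V_{\tau+1}^\star(L)-V_\tau^\star(L)$ for $0\le\tau\le T-1$ and $-V_T^\star(L)$ for $\tau=T$. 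Matching against $c_L=[-r_0(\cdot,\cdot,L_0);\dots;-r_T(\cdot,\cdot,L_T)]$ shows that the required slack $\hat z=c_L-A_L^\top\hat y$ coincides entry by entry with the Bellman residual defining $\hat z$ in the statement. The only content of this step is the correct reading of the column-major flattening and the block pattern of $A_L$.

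Next I would prove $\hat z\ge 0$. Writing $Q_t^\star(s,a):=r_t(s,a,L_t)+[P_t^a(L_t)V_{t+1}^\star(L)]_s$ for $t<T$ and $Q_T^\star(s,a):=r_T(s,a,L_T)$, the entry $\hat z_t^a(s)$ equals $[V_t^\star(L)]_s-Q_t^\star(s,a)$. The finite-horizon Bellman optimality equation for $\mathcal M(L)$ gives $[V_t^\star(L)]_s=\max_{a}Q_t^\star(s,a)$, whence $\hat z_t^a(s)\ge 0$ for all $t,s,a$; that is, $\hat z$ is the nonnegative optimal advantage/Bellman gap. Finally, for the complementary-slackness relation $\hat z^\top L=0$, I would first note that since $(y,z,L)$ solves \eqref{mf-vmo-constr}, its constraints are exactly the KKT system \eqref{eq:kkt}, so by strong duality of \eqref{cond_A_rewrite} the vector $L$ is an optimal occupation measure for $\mathcal M(L)$. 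Then I would compute $\hat z^\top L=(c_L-A_L^\top\hat y)^\top L=c_L^\top L-\hat y^\top(A_L L)=c_L^\top L-\hat y^\top b$, using $A_L L=b$. Here $c_L^\top L=-\sum_{t,s,a}r_t(s,a,L_t)L_t(s,a)$ equals $-V_{\mu_0}^\star(L)$ because $L$ attains the LP optimum and the LP value equals the MDP value by Lemma~\ref{prop:MDP_LP}, while $\hat y^\top b=\mu_0^\top(-V_0^\star(L))=-V_{\mu_0}^\star(L)$ by the definitions of $b$ and $\hat y$. Hence $\hat z^\top L=-V_{\mu_0}^\star(L)+V_{\mu_0}^\star(L)=0$.

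I expect the main obstacle to be this last step, since it is the one place where the hypothesis that $(y,z,L)$ is genuinely a \emph{solution} (not merely constraint-feasible) is used: it enters through the optimality of $L$ for the linear program and the identification of the dual objective $b^\top\hat y$ with $-V_{\mu_0}^\star(L)$. Equivalently, one must recognize $\hat y$ as a dual-optimal vector and invoke LP complementary slackness; the one-line computation above packages both facts together but still rests on strong duality and on Lemma~\ref{prop:MDP_LP}. The preceding two steps are, by contrast, either mechanical (dual feasibility) or an immediate consequence of the dynamic programming principle ($\hat z\ge 0$).
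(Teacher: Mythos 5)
Your proposal is correct. The first two steps (the blockwise verification that $A_L^\top\hat y+\hat z=c_L$ and the use of the Bellman optimality equations to get $\hat z\ge 0$) coincide with the paper's argument. Where you genuinely diverge is the complementary-slackness step $\hat z^\top L=0$. The paper proves it entirely inside LP duality: it first shows by backward induction on $t$ that $\hat y_{t-1}\le y_{t-1}$ for $t=1,\dots,T$ and $\hat y_T\ge y_T$, then observes that the constraints of \eqref{mf-vmo-constr} force $b^\top y=c_L^\top L$ so that $y$ is optimal for the dual program \eqref{YOPT}, and finally sandwiches $b^\top\hat y=b^\top y$ to conclude $L^\top\hat z=c_L^\top L-b^\top\hat y=0$. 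You instead identify both sides directly with the MDP value: primal optimality of $L$ (which, as you say, follows from the KKT system \eqref{eq:kkt} via strong duality) plus the value identity in Lemma \ref{prop:MDP_LP} gives $c_L^\top L=-V_{\mu_0}^\star(L)$, while the explicit form of $\hat y$ and $b$ gives $b^\top\hat y=\mu_0^\top(-V_0^\star(L))=-V_{\mu_0}^\star(L)$, and the two cancel. Your route is shorter and leans on a lemma already available in the paper; the paper's route is self-contained within the LP (it never needs the LP-value-equals-MDP-value statement) and yields as a by-product the componentwise comparison between $\hat y$ and an arbitrary dual solution $y$, i.e., that $\hat y$ is an extremal dual optimizer, which is not needed for the proposition but is not obtainable from your argument. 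Both are sound.
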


The following corollary follows immediately by the definitions of $\hat{y}$ and $\hat{z}$ in Proposition \ref{interpretation_y_z}. 
\begin{corollary}\label{y_z_bound}
Suppose that $(y,z,L)$ is a solution to \eqref{mf-vmo-constr}. Then there exists a modified solution $(\hat{y}, \hat{z},L)$, such that the following bounds hold:
\[
\|\hat{y}\|_2\leq \|\hat{y}\|_1\leq \frac{S(T+1)(T+2)}{2}r_{\max},\quad \|\hat{z}\|_2\leq \|\hat{z}\|_1\leq SA(T^2+T+2)r_{\max}. %(T+1)(T+3)r_{\max}.
\]
\end{corollary}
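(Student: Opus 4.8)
The plan is to lean entirely on Proposition \ref{interpretation_y_z}, which already guarantees that the explicitly-constructed triple $(\hat{y},\hat{z},L)$ solves \eqref{mf-vmo-constr}. Hence the corollary reduces to a pure norm computation: I only need to bound $\|\hat{y}\|_1$ and $\|\hat{z}\|_1$ entrywise, after which the inequalities $\|\hat{y}\|_2\le\|\hat{y}\|_1$ and $\|\hat{z}\|_2\le\|\hat{z}\|_1$ are the standard relation between the $\ell_2$- and $\ell_1$-norms. The single analytic ingredient I need is a uniform magnitude bound on the optimal value functions of the induced MDP $\mathcal{M}(L)$. Since $[V_t^\star(L)]_s$ is the optimal expected sum of the rewards $r_t,r_{t+1},\dots,r_T$ collected from time $t$ onward, and each reward is bounded in absolute value by $r_{\max}$, I would first establish $|[V_t^\star(L)]_s|\le (T-t+1)r_{\max}$ for every $s\in\mathcal{S}$ and $t\in\mathcal{T}$, formally by backward induction on $t$ through the Bellman optimality recursion.

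For $\hat{y}$, which simply stacks the blocks $V_1^\star(L),\dots,V_T^\star(L),-V_0^\star(L)$, I would sum the blockwise bounds:
\[
\|\hat{y}\|_1=\sum_{t=0}^T\|V_t^\star(L)\|_1\le\sum_{t=0}^T S(T-t+1)r_{\max}=Sr_{\max}\sum_{k=0}^{T}(k+1)=\frac{S(T+1)(T+2)}{2}r_{\max},
\]
where the re-indexing $k=T-t$ turns the sum into $\sum_{k=0}^T(k+1)=(T+1)(T+2)/2$. This matches the claimed bound on $\|\hat{y}\|_1$ exactly.

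For $\hat{z}$ I would bound each block entry separately. For $t<T$, writing $[\hat{z}_t^a]_s=[V_t^\star(L)]_s-r_t(s,a,L_t)-[P_t^a(L_t)V_{t+1}^\star(L)]_s$, I apply the triangle inequality together with the value bound above and the fact that each row of $P_t^a(L_t)$ is a probability vector, so that $|[P_t^a(L_t)V_{t+1}^\star(L)]_s|\le\|V_{t+1}^\star(L)\|_\infty\le(T-t)r_{\max}$; the terminal block $\hat{z}_T^a=V_T^\star(L)-r_T(\cdot,a,L_T)$ is treated separately using $\|V_T^\star(L)\|_\infty\le r_{\max}$. Summing $|[\hat{z}_t^a]_s|$ over $s\in\mathcal{S}$, $a\in\mathcal{A}$ and $t\in\mathcal{T}$ and evaluating the resulting arithmetic series then produces the stated bound.

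The one place demanding care is the tracking of the constant for $\hat{z}$, and I expect this to be the main (purely bookkeeping) obstacle rather than any conceptual step. A naive termwise triangle-inequality estimate gives $|[\hat{z}_t^a]_s|\le 2(T-t+1)r_{\max}$ for $t<T$, which upon summation overshoots to a bound proportional to $(T+1)(T+2)$; in fact single entries can attain $2(T-t+1)r_{\max}$, so matching the sharper coefficient $T^2+T+2$ in the statement cannot be done by the termwise estimate alone. Closing this gap would require exploiting the additional structure that the Bellman gap $\hat{z}_t^a$ is nonnegative and equals the difference $[V_t^\star(L)]_s-Q_t^\star(s,a)$, so that the advantages at strongly sub-optimal states are forced to be small, yielding a global (rather than entrywise) control of the sum. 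I would therefore isolate this constant-sharpening as the delicate point; all structural content and the feasibility of $(\hat{y},\hat{z},L)$ are already delivered by Proposition \ref{interpretation_y_z}, leaving only these elementary magnitude estimates.
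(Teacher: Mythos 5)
Your approach coincides with the paper's: Proposition \ref{interpretation_y_z} supplies feasibility of $(\hat{y},\hat{z},L)$, and the corollary is then the entrywise estimate $|[V_t^\star(L)]_s|\leq (T-t+1)r_{\max}$ pushed through the definitions of $\hat{y}$ and $\hat{z}$, followed by $\|\cdot\|_2\leq\|\cdot\|_1$. Your $\|\hat{y}\|_1$ computation is exactly the paper's and is tight. For $\hat{z}$, the paper's own one-line proof is precisely your ``naive'' termwise estimate, $|\hat{z}_t^a(s)|\leq\left((T-t+1)+1+(T-t)\right)r_{\max}$ for $t<T$ and $|\hat{z}_T^a(s)|\leq 2r_{\max}$, which sums to $SA(T^2+3T+2)r_{\max}=SA(T+1)(T+2)r_{\max}$ --- the same quantity you obtain. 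So the discrepancy you flag is not a gap in your argument but an arithmetic slip in the stated constant: $T^2+T+2$ should read $T^2+3T+2$. Moreover, the refinement you contemplate (using nonnegativity of the advantages for a global control of the sum) cannot rescue the smaller constant, because it is genuinely unattainable. Take $T=1$, $S=A=10$, and an $L$-independent game with one absorbing ``hell'' state whose rewards are all $-r_{\max}$, while each of the other nine states has one action with reward $+r_{\max}$ leading to one of these nine states and nine actions with reward $-r_{\max}$ leading to hell. Then $\hat{z}_0^a(s)=4r_{\max}$ and $\hat{z}_1^a(s)=2r_{\max}$ for each of the $9\times 9$ suboptimal state--action pairs at each time, giving $\|\hat{z}\|_1=486\,r_{\max}>400\,r_{\max}=SA(T^2+T+2)r_{\max}$, whereas $486\leq 600=SA(T+1)(T+2)$. (Nonnegativity of $\hat{z}$ only forces one zero-advantage action per state and zero advantages in the hell cone, which yields at best a factor $(S-1)(A-1)$ in place of $SA$ and does not reduce $(T+1)(T+2)$ to $T^2+T+2$.) Your proof therefore establishes the corollary with the corrected constant $SA(T+1)(T+2)r_{\max}$, which is all that the downstream results require after the same constant is adjusted in \eqref{mfvmo-full} and in $\Theta$; you should not spend further effort trying to reach $T^2+T+2$.
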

The key is to notice that $|[V_t^\star(L)]_s|\leq (T-t+1)r_{\max}$, %(for any $s\in\mathcal{S},\, t\in\mathcal{T}$),
$|\hat{z}_t^a(s)|\leq (T-t+1)r_{\max}+r_{\max}+(T-t)r_{\max}$ and $|\hat{z}_T^a(s)|\leq 2r_{\max}$, for any $s\in\mathcal{S},\,a\in\mathcal{A},\,t=0,\dots,T-1$. 

\begin{remark}\label{remark_mfvmo_form}
Note that the constraint  {\color{black}$\mathbf{1}^\top L_t=1$, $t\in\mathcal{T}$} is added to ensure the well-definedness of  $P_t$ and $r_t$  for non-optimal but feasible variables. As otherwise $P_t(s,a,\cdot)$ and $r_t(s,a,\cdot)$ are undefined outside of the simplex $\Delta(\mathcal{S}\times\mathcal{A})$. The choice of norms on $y$ and $z$ is to facilitate projection evaluation and reparametrization, where the first (projection evaluation) will be clear in Section \ref{solve_mfvmo} and the second (reparametrization) is explained in Appendix \ref{popular_tricks}. %in the longer version of the paper \cite{guo2022mf} due to space limit. %as will be clear in  \S\ref{solve_mfvmo}. 
In particular, the choice of $\ell_1$-norm for $z$ is to obtain a simplex-alike specialized polyhedral constraint paired with $z\geq 0$,  and the $\ell_2$-norm for $y$ is to obtain closed-form projections (by normalization) and simple reparametrizations. Nevertheless, all our subsequent results in Sections \ref{sec:expl-vs-mfomo} and \ref{solve_mfvmo} hold (up to changes of constants) for other choices including $\ell_2$-norm for $z$ and $\ell_1$-norm for $y$, as long as either efficient projections or reparametrizations are feasible for $y$ and $z$.  %(with potential changes of the right-hand side bounds when going beyond $\ell_1$ and $\ell_2$ norms)
\end{remark}

\begin{remark}
Note that whenever a Nash equilibrium solution exists for the mean-field game {\rm(}\eqref{setup:MFG} and \eqref{consistency_pop_flow}{\rm)},  the optimal value of the objective function of \eqref{mfvmo-full} is obviously $0$, and vice versa. To see more explicitly   the relationship between \eqref{mfvmo-full} and  Nash equilibrium solution(s) of mean-field game {\rm(}\eqref{setup:MFG} and \eqref{consistency_pop_flow}{\rm)},  denote the feasible set of \eqref{mfvmo-full} as $\Theta$, \ie, 
\begin{equation}\label{Theta_feasible}
\begin{split}
\Theta:=\{(y,z,L)\,|\,&L\geq 0,{\color{black}\mathbf{1}^\top L_t=1,\,t\in\mathcal{T},}\, \\
&\|y\|_2\leq S(T+1)(T+2)r_{\max}/2,\\
&\mathbf{1}^\top z\leq SA(T^2+T+2)r_{\max},z\geq 0\}.
\end{split}
\end{equation}
Then \eqref{mfvmo-full} can be rewritten as minimizing the following objective function over $(y,z,L)\in\Theta$:
\begin{equation}\label{expansion-mfvmo}
\begin{array}{ll}
%\text{minimize}_{(y,z,L)\in\Theta} 
f^{\text{\rm MF-OMO}}(y,z,L)=& \sum\limits_{s\in\mathcal{S}}\left(\sum\limits_{a\in\mathcal{A}}L_0(s,a)-\mu_0(s)\right)^2\\
&+\sum\limits_{s'\in\mathcal{S}}\sum\limits_{t=0}^{T-1}\left(\sum\limits_{a\in\mathcal{A}}L_{t+1}(s',a)-\sum\limits_{s\in\mathcal{S},a\in\mathcal{A}}L_t(s,a)P_t(s'|s,a,L_t)\right)^2\\
&+\sum\limits_{s\in\mathcal{S},a\in\mathcal{A}}\left(y_{T-1}(s)-r_T(s,a,L_T)-z_T(s,a)\right)^2 \\
&+\sum\limits_{s\in\mathcal{S},a\in\mathcal{A}}\sum\limits_{t=0}^{T-2}\left(y_t(s)-r_{t+1}(s,a,L_{t+1})\right.\\
&\qquad\qquad\qquad\left.-\sum_{s'\in\mathcal{S}}P_{t+1}(s'|s,a,L_{t+1})y_{t+1}(s')-z_{t+1}(s,a)\right)^2\\
&+\sum\limits_{s\in\mathcal{S},a\in\mathcal{A}}\left(y_T(s)+r_0(s,a,L_0)+\sum\limits_{s'\in\mathcal{S}}P_0(s'|s,a,L_0)y_0(s')+z_0(s,a)\right)^2\\
&+\sum\limits_{s\in\mathcal{S},a\in\mathcal{A},t\in\mathcal{T}}z_t(s,a)L_t(s,a).
\end{array}
\end{equation}
Here the first two rows expand the term $\|A_LL-b\|_2^2$, corresponding to the consistency condition 2) in Definition \ref{mfne}. The next four rows expand the term $\|A_L^\top y+z-c_L\|_2^2$, corresponding to the Bellman residual by the interpretations of $y$ and $z$ in Proposition \ref{interpretation_y_z} and hence the optimality condition 1) in Definition \ref{mfne}. The last row expands the complementarity term $z^\top L$, connecting the two conditions into a single optimization problem.

This MF-OMO formulation is in sharp contrast to most existing approaches in the literature of discrete-time mean-field games such as \cite{guo2019learning, perrin2020fictitious, perolat2021scaling}, which alternate between a full policy optimization/evaluation given a fixed mean-field flow and a full mean-field flow forward propagation given a fixed policy sequence. Moreover, both the policy optimization/evaluation and the mean-field propagation lead to coupling over the full time horizon $\mathcal{T}$. As a result, the convergence of these algorithms requires assumptions of contractivity and monotonicity.  
%these algorithms are reminiscent of those for general-sum two-player games, for which the convergence requires assumptions such as contractivity and monotonicity.   %the analysis of these algorithms are restricted to 
%both of which lead to coupling over the full time horizon $\mathcal{T}$. %As a result,  %solving a Bellman equation related problem with a fixed mean-field sequence and solving the consistency condition problem with a fixed policy sequence. 
In contrast, \eqref{expansion-mfvmo} shows that MF-OMO is fully decoupled over the time horizon, which better facilitates parallel and distributed optimization algorithms and enables more efficient sub-samplings for stochastic optimization algorithms, especially for large time-horizon problems. 
%parallelism (for parallel and distributed optimization algorithms) and sub-sampling (for stochastic optimization algorithms) than these existing approaches, especially for large time-horizon problems. 
\end{remark}

% XXX. Comment on why using $\ell_2$-norm for $y$ but $\ell_1$-norm for $z$: Both enable easier projections and reparametrizations. See the next section for more details. Note that the $\ell_1$-norm of $z$ is used to pair with $z\geq 0$ to give the simplex. We do have efficient projections for $\ell_1$ balls \cite{duchi2008efficient,songsiri2011projection,condat2016fast} but no efficient reparametrization. 

\section{Connecting exploitability with MF-OMO suboptimality}\label{sec:expl-vs-mfomo}
%\paragraph{Connecting MF-OMO suboptimality with exploitability.}
%\subsubsection{Bounding exploitability with MF-OMO suboptimality} 
Having established the equivalence between  minimizers of \eqref{mfvmo-full} and Nash equilibrium solutions of mean-field games, we now 
 connect the exploitablity of mean-field games with  suboptimal solutions to \eqref{mfvmo-full}. 
 
 The concept of exploitability in game theory is   used to characterize the difference between any policy and a Nash equilibrium solution. More precisely,  define a mapping $\Gamma$ that maps any policy sequence $\{\pi_t\}_{t\in\mathcal{T}}$ to $\{L_t\}_{t\in\mathcal{T}}$ when all players take such a policy sequence. Following the consistency condition \eqref{consistency_def}, such $\Gamma$ can be defined recursively, starting with the initialization %$\Gamma(\pi)_0(s,a):=L_0(s,a)=\mu_0(s)\pi_0(a|s)$,
\begin{equation}\label{consistency_explicit_1}
\Gamma(\pi)_0(s,a):=\mu_0(s)\pi_0(a|s), %L_0(s,a)=
\end{equation} 
such that 
\begin{equation}\label{consistency_explicit_2}
\Gamma(\pi)_{t+1}(s,a):=\pi_{t+1}(a|s)\sum_{s'\in\mathcal{S}}\sum_{a'\in\mathcal{A}}\Gamma(\pi)_t(s',a')P_t(s|s',a',\Gamma(\pi)_t),\quad \forall t=0,\dots,T-1. %L_{t+1}(s',a')=
\end{equation} %where $\mu_t(s)=\sum_{a\in\mathcal{A}}\Gamma(\pi)_t(s,a)=\sum_{a\in\mathcal{A}}L_t(s,a)$. XXX. Why mention $\mu_t$? Try remove $L_t$ notation here %(and similarly $Q$ notation below)
%to avoid confusion with the unrolled operators below.
For any given policy sequence $\pi=\{\pi_t\}_{t\in\mathcal{T}}$, let $L=\{L_t\}_{t\in\mathcal{T}}=\Gamma(\pi)$ be the induced mean-field flow. 
%We use $V_{\mu_0}^{\pi}(L)$ to denote the value of the representative player taking policy sequence $\pi$ under population distribution sequence $L$ (and starting from initial state distribution $\mu_0$, \cf \eqref{optimality_agent}), \ie, $V_{\mu_0}^{\pi}(L)=\sum_{s\in\mathcal{S}}\mu_0(s)[V_0^{\pi}(L)]_s$.
Then exploitability characterizes the sub-optimality of the policy $\pi$ under $L$ as follows, %(for which the consistency condition automatically holds by definition):
\begin{equation}\label{def:expl}
\text{Expl}(\pi):=V_{\mu_0}^\star(\Gamma(\pi))-V_{\mu_0}^{\pi}(\Gamma(\pi))=\max_{\pi'}V_{\mu_0}^{\pi'}(\Gamma(\pi))-V_{\mu_0}^{\pi}(\Gamma(\pi)).
\end{equation}
In particular, $(\pi,L)$ is a Nash equilibrium solution if and only if $L=\Gamma(\pi)$ and $\text{Expl}(\pi)=0$; %Exploitability also characterizes the $\epsilon$-Nash equilibrium when $\pi$ is not a mean-field NE. 
and a policy $\pi$ is an $\epsilon$-Nash equilibrium solution if $\text{Expl}(\pi)\leq \epsilon$.  

However, computing and optimizing exploitability directly is not easy. First, $\text{Expl}(\pi)$ is generally nonconvex and nonsmooth, even if $r_t(s,a,\cdot)$ and $P_t(s,a,\cdot)$ are differentiable in $L_t$. 
Secondly, a single evaluation of $\text{Expl}(\pi)$ requires a full policy optimization when calculating $\max_{\pi'}V_{\mu_0}^{\pi'}(\Gamma(\pi))$ and a full policy evaluation when calculating $V_{\mu_0}^{\pi}(\Gamma(\pi))$.
In this section, we will show that in order to find an $\epsilon$-Nash equilibrium solution, it is sufficient to solve \eqref{mfvmo-full} to $O(\epsilon^2)$ %$\epsilon/f(S,A,T,C_P,C_r,r_{\max})$
sub-optimality.

We will need additional notation here before presenting the precise statement.
Denote $P_t(\cdot|\cdot,\cdot,L_t)\in\mathbb{R}^{S\times S\times A}$ as a tensor of dimensions $S\times S\times A$. 
%, and $r_t(\cdot,\cdot,L_t)\in\mathbb{R}^{SA}$ as a flattened vector (with column-major order) of a matrix of dimensions $S\times A$. 
For a sequence of $T+1$  tensors $p=\{p_t(\cdot|\cdot,\cdot)\}_{t\in\mathcal{T}}\in\mathbb{R}^{S\times S\times A}$ with dimensions $S\times S\times A$ (\eg, $p_t(s'|s,a)$ can be $P_t(s'|s,a,L_t)$) and a sequence of $T+1$ vectors  $x=\{x_t(\cdot,\cdot)\}_{t\in\mathcal{T}}\in\mathbb{R}^{SA}$ with dimensions $S\times A$ (\eg, $x_t(s,a)$ can be $r_t(s,a,L_t)$, $d_t(s,a)$ or $L_t(s,a)$, etc.), define  
\begin{equation*}
\begin{split}
&\|p\|_{\infty,1}=\max_{s\in\mathcal{S},a\in\mathcal{A},t=0,\dots,T-1}\sum_{s'\in\mathcal{S}}|p_t(s'|s,a)|,\quad \|x\|_{1,\infty}=\sum_{t\in\mathcal{T}}\max_{s\in\mathcal{S},a\in\mathcal{A}}|x_t(s,a)|.
%&\|r\|_2=\sqrt{\sum\nolimits_{t\in\mathcal{T},s\in\mathcal{S},a\in\mathcal{A}}(r_t(s,a))^2}.
\end{split}
\end{equation*}
%and we define $\|r\|_2=\sqrt{\sum\nolimits_{t\in\mathcal{T},s\in\mathcal{S},a\in\mathcal{A}}(r_t(s,a))^2}$. Note that the last norm is the vector norm of the flattened/vectorized $r$. 
Similarly, for any tensor $p_t(\cdot|\cdot,\cdot)\in\mathbb{R}^{S\times S\times A}$ with dimensions $S\times S\times A$, define %and any matrix $r_t(\cdot,\cdot)\in\mathbb{R}^{S\times A}$ with dimensions $S\times A$, we define
\begin{equation*}
\begin{split}
&\|p_t\|_{\infty,1}=\max_{s\in\mathcal{S},a\in\mathcal{A}}\sum_{s'\in\mathcal{S}}|p_t(s'|s,a)|. %,\quad \|r_t\|_{\infty}=\max_{s\in\mathcal{S},a\in\mathcal{A}}|r_t(s,a)|,\\ &\|r_t\|_{1}=\sum_{s\in\mathcal{S},a\in\mathcal{A}}|r_t(s,a)|,\quad \|r_t\|_2=\sqrt{\sum_{s\in\mathcal{S},a\in\mathcal{A}}(r_t(s,a))^2}.
\end{split}
\end{equation*}
Denote also $\|\cdot\|_1$, $\|\cdot\|_2$ and $\|\cdot\|_\infty$ respectively for the standard $\ell_1$, $\ell_2$ and $\ell_\infty$ vector and matrix norms. 
Then we will see that a near-optimal solution to \eqref{mfvmo-full} will be close to a Nash equilibrium solution, assuming  the dynamics and rewards are Lipschitz continuous in $L_t$. 
\begin{theorem}\label{VMO-vs-MFNE}
Suppose that for any $t\in\mathcal{T}$, $P_t(\cdot|\cdot,\cdot,L_t)$ and  $r_t(\cdot,\cdot,L_t)$ are $C_P(>0)$ and $C_r(>0)$ Lipschitz continuous in $L_t$, respectively, i.e., 
\begin{equation*}
 \|P_t^{L^1}-P_t^{L^2}\|_{\infty,1}\leq C_P\|L_t^1-L_t^2\|_1,\quad \|r_t^{L^1}-r_t^{L^2}\|_{\infty}\leq C_r\|L_t^1-L_t^2\|_1,
\end{equation*}
where $P_t^L:=P_t(\cdot|\cdot,\cdot,L_t)$ and $r_t^L:=r_t(\cdot,\cdot,L_t)$, and $L^i:=\{L_t^i\}_{t\in\mathcal{T}}$ ($i=1,2$) are two arbitrary mean-field flows. 
% \[
%  \|P_{L_1}-P_{L_2}\|_{\infty,1}\leq C_P\|L_1-L_2\|_1,\quad \|r_{L_1}-r_{L_2}\|_{1,\infty}\leq C_r\|L_1-L_2\|_1,
% \]
%where $P_L:=\{P(\cdot|\cdot,\cdot,L_t)\}_{t=0}^T$ and $r_L:=\{r(\cdot,\cdot,L_t)\}_{t=0}^T$ for $L=\{L_t\}_{t=0}^T$.   %for any $s,s'\in\mathcal{S}$, $a\in\mathcal{A}$ and $t\in\mathcal{T}$. %and that $r_{\max}=\max_{t=0,\dots,T,s\in\mathcal{S},a\in\mathcal{A},L\in\Delta(\mathcal{S}\times\mathcal{A})}r_t(s,a,L)$. 
Let $y,z,L$ be a feasible solution to \eqref{mfvmo-full}, with the value of its objective function being $f^{\text{\rm MF-OMO}}(y,z,L)\leq \epsilon^2$ for some $\epsilon\geq 0$. Then for any $\pi\in\Pi(L)$, we have 
\begin{equation*}
\text{\rm Expl}(\pi)\leq f(S,A,T,C_P,C_r,r_{\max}) \epsilon+\epsilon^2,
\end{equation*}
where %$f(S,A,T,C_P,C_r,r_{\max})$ is a problem dependent constant, defined as
\begin{equation*}
\begin{split}
f(S,A,T,C_P,C_r,r_{\max}):=&\,\,T(T+1)r_{\max}((C_P+1)^{T+1}-1)\sqrt{S}\\
&\,+2C_r\dfrac{(C_P+1)^{T+2}-(T+2)C_P-1}{C_P^2}\sqrt{S}\\
&\, +S^{\frac{3}{2}}A(T+2)^3r_{\max}+\sqrt{SA(T+1)}+\sqrt{T}.
\end{split}
\end{equation*}
%where $\pi\in\Pi(L)$.
\end{theorem}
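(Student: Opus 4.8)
The plan is to bound the exploitability by inserting the induced MDP $\mathcal{M}(L)$ as an intermediary, exploiting that the three groups of terms in the expanded objective \eqref{expansion-mfvmo} control, respectively, how far $L$ is from being self-consistent, how far $\pi$ is from optimal inside $\mathcal{M}(L)$, and the complementarity slack. Writing $d:=\Gamma(\pi)$ for the flow actually induced by $\pi$ (so that $\text{Expl}(\pi)=V_{\mu_0}^\star(d)-V_{\mu_0}^{\pi}(d)$ by \eqref{def:expl}), I would use the telescoping decomposition
$$\text{Expl}(\pi)=\big(V_{\mu_0}^\star(d)-V_{\mu_0}^\star(L)\big)+\big(V_{\mu_0}^\star(L)-V_{\mu_0}^{\pi}(L)\big)+\big(V_{\mu_0}^{\pi}(L)-V_{\mu_0}^{\pi}(d)\big),$$
bounding the two outer differences by perturbation of the MDP value with respect to the mean-field, and the middle difference by the suboptimality of $\pi$ within the fixed MDP $\mathcal{M}(L)$.

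For the outer terms I would first control $\sum_t\|L_t-d_t\|_1$ using the consistency part (the first two rows of \eqref{expansion-mfvmo}), which being at most $\epsilon^2$ says $L$ satisfies \eqref{consistency_explicit_1}--\eqref{consistency_explicit_2} up to an $\ell_2$-residual of size $\epsilon$. Since $\pi\in\Pi(L)$ gives $L_t(s,a)=\mu_t^L(s)\pi_t(a|s)$ with $\mu_t^L(s)=\sum_a L_t(s,a)$, subtracting the exact recursion for $d=\Gamma(\pi)$ and splitting the propagation difference
$$\sum_{s,a}L_t(s,a)P_t(s'|s,a,L_t)-\sum_{s,a}d_t(s,a)P_t(s'|s,a,d_t)$$
into a transport-of-error piece (bounded by $\|L_t-d_t\|_1$) and a Lipschitz-of-dynamics piece (bounded by $C_P\|L_t-d_t\|_1$, using $\sum_{s,a}d_t(s,a)=1$) yields a Gr\"onwall recursion $\|L_{t+1}-d_{t+1}\|_1\lesssim \epsilon+(1+C_P)\|L_t-d_t\|_1$. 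Unrolling and summing gives $\sum_t\|L_t-d_t\|_1\lesssim \epsilon\,((C_P+1)^{T+1}-1)/C_P$, which is precisely the source of the geometric $(C_P+1)^{T+1}$ growth in $f$. Feeding $\|L-d\|_1$ through the Lipschitz hypotheses and then Lemma \ref{perturbation} bounds both outer value differences by a constant times $\sum_t\|L_t-d_t\|_1$, hence by $O(\epsilon)$, with the $r_{\max}$- and $C_r$-weighted exponential terms of $f$ arising here.

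For the middle term I would invoke the interpretation of $y$ and $z$ from Proposition \ref{interpretation_y_z}: the next four rows of \eqref{expansion-mfvmo} force $\|A_L^\top y+z-c_L\|_2\le\epsilon$, so $y,z$ are $\epsilon$-approximate value functions and advantages for $\mathcal{M}(L)$. Via the performance-difference identity $V_{\mu_0}^\star(L)-V_{\mu_0}^{\pi}(L)=\sum_{t,s,a}d_t^{\pi,L}(s,a)\,\hat z_t(s,a)$, where $d^{\pi,L}$ is the occupation of $\pi$ in the \emph{fixed} MDP $\mathcal{M}(L)$ and $\hat z$ is the true advantage, I would replace $\hat z$ by $z$ at cost $O(\epsilon)$ from the Bellman residual and replace $d^{\pi,L}$ by $L$ at cost $O(\epsilon)$ from the consistency residual (note that this last comparison uses the same $L$-dependent dynamics, so it grows only linearly in $T$ and produces no extra $C_P$-exponential), leaving the complementarity slack $z^\top L\le\epsilon^2$. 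Equivalently, the same conclusion follows from linear-program strong duality (Lemma \ref{prop:MDP_LP}) by writing the duality gap as $c_L^\top L-b^\top y=y^\top(A_LL-b)+z^\top L$ and bounding each piece. This is exactly where the isolated $\epsilon^2$ term in the statement originates, since $z^\top L$ enters the objective at level $\epsilon^2$ rather than $\epsilon$.

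Combining the three estimates gives $\text{Expl}(\pi)\le(\text{const})\,\epsilon+\epsilon^2$, and the residual work is bookkeeping to pin down $f(S,A,T,C_P,C_r,r_{\max})$. I expect the main obstacle to be the careful tracking of dimension-dependent constants across the $\ell_1$/$\ell_2$/$\ell_\infty$ conversions and the time horizon, so that $\sqrt S$, $S^{3/2}A(T+2)^3$, and the geometric $(C_P+1)^{T+1}$ factors emerge with the correct powers; this requires combining the Gr\"onwall estimate, the perturbation constant from Lemma \ref{perturbation}, and the a priori bounds $|[V_t^\star(L)]_s|\le(T-t+1)r_{\max}$ and the advantage bounds underlying Corollary \ref{y_z_bound}. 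The three structural steps are conceptually routine; the technical heart is ensuring the error propagation through the $L$-dependent dynamics in the outer terms does not inflate the constant beyond the stated geometric growth.
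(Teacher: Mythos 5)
Your proposal matches the paper's proof in all essential respects: the same three-term telescoping decomposition of $\text{Expl}(\pi)$ through the intermediate MDP $\mathcal{M}(L)$, the same linear-in-$T$ induction comparing $L$ to the occupation measure of $\pi$ in $\mathcal{M}(L)$, the same Gr\"onwall-type induction with $(C_P+1)$ growth comparing $L$ to $\Gamma(\pi)$, Lemma \ref{perturbation} for the outer value differences, and the perturbed-LP/strong-duality argument (your ``duality gap'' computation, with the residual $\Delta=A_L^\top y+z-c_L$ handled via the perturbed dual) for the middle term. You also correctly locate the origin of each constant in $f$ and of the isolated $\epsilon^2$ term, so this is essentially the paper's argument.
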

\begin{remark}
In the special case when the transition dynamics $P_t(s'|s,a,L_t)$ are independent of $L_t$, as generally assumed  in the literature of discrete-time mean-field games \cite{elie2020convergence, perrin2020fictitious, perolat2021scaling}, we have $C_P=0$ and a much tighter  constant 
\[
f(S,A,T,C_P,C_r,r_{\max})=C_r(T+1)(T+2)\sqrt{S}+S^{\frac{3}{2}}A(T+2)^3r_{\max}+\sqrt{SA(T+1)}+\sqrt{T}.
\]
% \[
% f(S,A,T,C_P,C_r,r_{\max})=C_r(T+1)(T+2)\sqrt{S}+S^{\frac{3}{2}}A(T+2)^3r_{\max}+\sqrt{SA(T+1)}+\sqrt{T}.
% \]
 Note that, this tighter constant is polynomial in all the problem parameters.

\end{remark}

Conversely, 
one can characterize  an $\epsilon$-Nash equilibrium solution in terms of the sub-optimality of \eqref{mfvmo-full}, without the additional assumption of  the Lipschitz continuity  in Theorem \ref{VMO-vs-MFNE}.
\begin{theorem}\label{MFNE-vs-VMO}
Let $\pi=\{\pi_t\}_{t\in\mathcal{T}}$ be an $\epsilon$-Nash equilibrium solution, \ie, $\text{\rm Expl}(\pi)\\\leq\epsilon$. Define  $L=\Gamma(\pi)$, \[
y=[V_1^\star(L), V_2^\star(L), \dots,V_T^\star(L), -V_0^\star(L)]\in\mathbb{R}^{S(T+1)},
\]
and $z=[z_0,z_1,\dots,z_T]\in\mathbb{R}^{SA(T+1)}$ with $z_t=[z_t^1,\dots,z_t^A]$ ($t=0,\dots,T$), where
\[
z_t^a=V_t^\star(L)-r_t(\cdot,a,L_t)-P_t^a(L_t)V_{t+1}^\star(L)\in\mathbb{R}^S, \quad t=0,\dots,T-1,\,a\in\mathcal{A}
\]
and $z_T^a=V_T^\star(L)-r_T(\cdot,a,L_T)\in\mathbb{R}^S$ ($a\in\mathcal{A}$). Then $y,z,L$ is a feasible solution to \eqref{mfvmo-full} and $f^{\text{\rm MF-OMO}}(y,z,L)\leq \epsilon$. 
\end{theorem}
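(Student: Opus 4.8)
The plan is to show that at the proposed triple $(y,z,L)$ the two quadratic penalties in $f^{\text{\rm MF-OMO}}$ vanish identically, while the bilinear complementarity term $z^\top L$ reproduces exactly the exploitability $\text{\rm Expl}(\pi)$, which is bounded by $\epsilon$ by hypothesis. Feasibility for \eqref{mfvmo-full} will then come essentially for free from the constructions already analyzed in Proposition \ref{interpretation_y_z} and Corollary \ref{y_z_bound}.

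First I would record that $L=\Gamma(\pi)$ coincides with the occupation measure of $\pi$ in the induced MDP $\mathcal{M}(L)$: the recursions \eqref{consistency_explicit_1}--\eqref{consistency_explicit_2} defining $\Gamma(\pi)$ are term-for-term the forward occupation-measure recursion $\mathbb{P}^\pi(s_t=s,a_t=a)$ in $\mathcal{M}(L)$, so $L_t(s,a)=\mathbb{P}^\pi(s_t=s,a_t=a)$. A one-line induction then gives $L\ge 0$ and $\mathbf{1}^\top L_t=1$ for every $t$, and, reading off the block structure of $A_L$ in \eqref{kkt_param2}, these same recursions are precisely the equality constraints $A_L L=b$. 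Hence the first penalty $\|A_L L-b\|_2^2=0$, i.e.\ the first two rows of \eqref{expansion-mfvmo} vanish, and the $L$-constraints of $\Theta$ in \eqref{Theta_feasible} hold.

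Next I would observe that the prescribed $y$ and $z$ are verbatim the modified variables $\hat y,\hat z$ built from $L$ in Proposition \ref{interpretation_y_z}. Consequently $A_L^\top y+z=c_L$ identically, so the second penalty $\|A_L^\top y+z-c_L\|_2^2=0$ (rows three through six of \eqref{expansion-mfvmo}); the norm bounds placing $y,z$ in $\Theta$ are exactly the content of Corollary \ref{y_z_bound}; and $z\ge 0$ since each entry $[V_t^\star(L)]_s-r_t(s,a,L_t)-\sum_{s'}P_t(s'|s,a,L_t)[V_{t+1}^\star(L)]_{s'}$ is the gap between the optimal value $[V_t^\star(L)]_s$ and the quantity it maximizes over $a$ in the Bellman equation, hence nonnegative. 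Thus $(y,z,L)$ is feasible and $f^{\text{\rm MF-OMO}}(y,z,L)=z^\top L$.

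The crux is the identity $z^\top L=\text{\rm Expl}(\pi)$. I would expand, with the convention $V_{T+1}^\star(L):=0$,
\[
z^\top L=\sum_{t=0}^T\sum_{s,a}L_t(s,a)\Big([V_t^\star(L)]_s-r_t(s,a,L_t)-\sum_{s'}P_t(s'|s,a,L_t)[V_{t+1}^\star(L)]_{s'}\Big),
\]
and split it into three sums. The reward sum equals $V_{\mu_0}^\pi(\Gamma(\pi))$ because $L$ is the occupation measure of $\pi$. For the two value-function sums I would use the already-verified consistency relation $\sum_{s,a}L_t(s,a)P_t(s'|s,a,L_t)=\sum_a L_{t+1}(s',a)$ to rewrite the transition sum as $\sum_{t=1}^{T}\sum_s(\sum_a L_t(s,a))[V_t^\star(L)]_s$, which telescopes against $\sum_{t=0}^{T}\sum_s(\sum_a L_t(s,a))[V_t^\star(L)]_s$ and leaves only the $t=0$ term $\sum_s\mu_0(s)[V_0^\star(L)]_s=V_{\mu_0}^\star(\Gamma(\pi))$. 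Combining yields $z^\top L=V_{\mu_0}^\star(\Gamma(\pi))-V_{\mu_0}^\pi(\Gamma(\pi))=\text{\rm Expl}(\pi)\le\epsilon$, completing the argument. The only delicate point, and the step I expect to require the most care, is this telescoping: one must match the boundary contributions (the $t=T$ term killed by $V_{T+1}^\star(L)=0$ and the surviving $t=0$ term tied to $\mu_0$) against the consistency relation; every other step is direct substitution into structural identities already established in the excerpt.
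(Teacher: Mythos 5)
Your proposal is correct and follows essentially the same route as the paper: feasibility and the vanishing of the two quadratic penalties come from the constructions in Proposition \ref{interpretation_y_z} and Corollary \ref{y_z_bound}, leaving $f^{\text{MF-OMO}}(y,z,L)=z^\top L$. The only cosmetic difference is in evaluating $z^\top L$: the paper obtains $z^\top L=c_L^\top L-b^\top y$ in one line from the two equality constraints and identifies these terms with $-V_{\mu_0}^{\pi}(L)$ and $-V_{\mu_0}^{\star}(L)$, whereas you carry out the equivalent entrywise expansion and telescoping by hand; both yield $z^\top L=\text{Expl}(\pi)\leq\epsilon$.
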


\section{Finding Nash equilibrium solutions via solving MF-OMO}\label{solve_mfvmo}

The optimization formulation \eqref{mfvmo-full} enables applications of  families of optimization algorithms to find  multiple Nash equilibrium solutions. In this section, we first present the projected gradient descent algorithm and establish its convergence guarantees to the Nash equilibrium solutions of general mean-field games. We then present a finite-time convergent algorithm for solving a special class of mean-field games with linear rewards and mean-field independent dynamics. 
%we present two deterministic algorithms, projected gradient descent (for generic MFGs) and (for MFGs with linear rewards and mean-field independent),  with convergence guarantees to the MFNE(s) of MFGs. 
More discussion on stochastic algorithms,  convergence to stationary points as well as practical tricks such as reparametrization, acceleration and variance reduction can be found in Appendix \ref{sgd_and_stationary_convergence}. %in the longer version of the paper \cite{guo2022mf}.  %\ref{sgd_and_stationary_convergence}. 

To start, we will need the following  assumption. 
\begin{assumption}\label{C2smooth}
$P_t(s,a,L_t)$ and $r_t(s,a,L_t)$ are both second-order continuously differentiable in $L_t$ within some open set containing $\Delta(\mathcal{S}\times\mathcal{A})$ for any $s\in\mathcal{S},a\in\mathcal{A}$.  
\end{assumption}
%Note that in particular, the conservative treasure hunting example in this paper obviously satisfies this assumption. 
This assumption is easily verifiable and holds for  numerical examples in many existing works including \cite{guo2020general,cui2021approximately} and for theoretical studies of  \cite{perrin2020fictitious,perolat2021scaling}.

For notation simplicity,  denote the concatenation of $y,z,L$ as $\theta$, and recall  the earlier notation for  feasible set of \eqref{mfvmo-full} as $\Theta$ (\cf \eqref{Theta_feasible}).
Then we see that Assumption \ref{C2smooth}, together with the compactness of the probability simplex of $L_t$ and the norm bounds on $y$ and $z$ in \eqref{mfvmo-full}, immediately leads to the following proposition.
\begin{proposition}\label{strong_smooth_fact}
Under Assumption \ref{C2smooth}, $f^{\text{\rm MF-OMO}}(\theta)$ is $M$-strongly smooth for some positive constant $M>0$,  for any $\theta\in\Theta$. That is, for any $\theta, \theta'\in\Theta$,  we have 
$\|\nabla_{\theta} f^{\text{\rm MF-OMO}}(\theta)-\nabla_{\theta} f^{\text{\rm MF-OMO}}(\theta')\|_2\leq M\|\theta-\theta'\|_2$.
\end{proposition}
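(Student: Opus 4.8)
The plan is to exploit the compactness and convexity of the feasible set $\Theta$ together with the $C^2$ regularity supplied by Assumption \ref{C2smooth}, thereby reducing strong smoothness to a uniform bound on the Hessian of $f^{\text{\rm MF-OMO}}$.

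First I would record two structural facts about $\Theta$ (\cf \eqref{Theta_feasible}). It is \emph{convex}, being the intersection of the product of probability simplices $\{L_t\geq 0,\ \mathbf{1}^\top L_t=1\}$, the Euclidean ball $\{\|y\|_2\leq S(T+1)(T+2)r_{\max}/2\}$, and the truncated simplex-type set $\{z\geq 0,\ \mathbf{1}^\top z\leq SA(T^2+T+2)r_{\max}\}$, each of which is convex. It is also \emph{compact}, since all three pieces are closed and bounded. Convexity guarantees that for any $\theta,\theta'\in\Theta$ the entire segment $[\theta,\theta']$ stays inside $\Theta$, which is exactly what is needed to pass from a Hessian bound to a Lipschitz gradient via the mean value inequality.

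Second, I would establish that $f^{\text{\rm MF-OMO}}$ is twice continuously differentiable on an open neighborhood of $\Theta$. The only non-polynomial dependence in the objective enters through the entries of $A_L$ and $c_L$, namely $P_t(s'|s,a,L_t)$ and $r_t(s,a,L_t)$; by Assumption \ref{C2smooth} these are $C^2$ in $L_t$ on an open set $\mathcal{U}_t\supseteq\Delta(\mathcal{S}\times\mathcal{A})$. Setting $\mathcal{U}=\{(y,z,L):L_t\in\mathcal{U}_t,\ t\in\mathcal{T}\}$, an open set containing $\Theta$, and reading off the expanded form \eqref{expansion-mfvmo}, every summand of $f^{\text{\rm MF-OMO}}$ is a finite sum, product, or square of the coordinates $y_t(s),z_t(s,a),L_t(s,a)$ (which are linear, hence $C^\infty$) and of $P_t(\cdot|\cdot,\cdot,L_t),r_t(\cdot,\cdot,L_t)$ (which are $C^2$ on $\mathcal{U}$). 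Since sums, products, and compositions of $C^2$ functions are $C^2$, the whole objective is $C^2$ on $\mathcal{U}$; in particular its Hessian $\nabla^2 f^{\text{\rm MF-OMO}}$ is continuous on $\mathcal{U}$.

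Finally I would combine the two facts. By continuity of $\nabla^2 f^{\text{\rm MF-OMO}}$ and compactness of $\Theta\subseteq\mathcal{U}$, the quantity $M:=\max_{\theta\in\Theta}\|\nabla^2 f^{\text{\rm MF-OMO}}(\theta)\|_2$ is finite. For arbitrary $\theta,\theta'\in\Theta$, convexity places the segment $\theta_\tau:=\theta'+\tau(\theta-\theta')$ ($\tau\in[0,1]$) inside $\Theta$, so the fundamental theorem of calculus gives $\nabla f^{\text{\rm MF-OMO}}(\theta)-\nabla f^{\text{\rm MF-OMO}}(\theta')=\int_0^1\nabla^2 f^{\text{\rm MF-OMO}}(\theta_\tau)(\theta-\theta')\,d\tau$, whence $\|\nabla f^{\text{\rm MF-OMO}}(\theta)-\nabla f^{\text{\rm MF-OMO}}(\theta')\|_2\leq M\|\theta-\theta'\|_2$, i.e.\ $M$-strong smoothness (taking any larger positive constant if needed). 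The main obstacle — and the only place where genuine care is required — is the second step: one must confirm that the constraint $\mathbf{1}^\top L_t=1$ together with $L\geq 0$ keeps each $L_t$ inside the region $\mathcal{U}_t$ where $P_t$ and $r_t$ are defined and $C^2$ (outside the simplex they are \emph{a priori} meaningless, \cf Remark \ref{remark_mfvmo_form}), and one must track the bilinear term $z^\top L$ and the products $L_t(s,a)P_t(s'|s,a,L_t)$ and $P_{t+1}(s'|s,a,L_{t+1})y_{t+1}(s')$ to be sure no non-smoothness is introduced; everything after the Hessian bound is routine.
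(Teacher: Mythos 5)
Your proposal is correct and follows essentially the argument the paper intends: the paper states that the proposition is ``immediate'' from Assumption \ref{C2smooth} together with the compactness of the simplex and the norm bounds on $y$ and $z$, and your write-up simply makes explicit the standard chain (convexity and compactness of $\Theta$, $C^2$ regularity of $f^{\text{\rm MF-OMO}}$ on an open neighborhood of $\Theta$, a uniform Hessian bound, and the fundamental theorem of calculus along the segment). The points you flag as requiring care --- that the constraints $L\geq 0$, $\mathbf{1}^\top L_t=1$ confine $L_t$ to the simplex where $P_t$ and $r_t$ are $C^2$, and that the remaining terms are polynomial in $(y,z,L)$ --- are exactly the observations the paper relies on, so no gap remains.
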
%in the feasible set of \eqref{mfvmo-full}

In addition, Assumption \ref{C2smooth}, together with the compactness of $\Theta$, implies the continuity condition in Proposition \ref{existence} and the Lipschitz continuity condition in Theorem \ref{VMO-vs-MFNE}. 
Moreover, optimal solution(s) exist for \eqref{mfvmo-full} under Assumption \ref{C2smooth}. % due to the continuity of the objective and the compactness of the constraint set $\Theta$. 
We will denote the optimal solution set of \eqref{mfvmo-full} %the set of MFNEs 
as $\Theta^\star$ hereafter. %$f^\star$. 

\subsection{PGD and convergence to Nash equilibrium solutions}
\paragraph{Projected gradient descent (PGD).} The algorithm of projected gradient descent  updates $\theta$ with the following iterative process:
\begin{equation}\label{pgd_mfvmo}
    \theta_{k+1}=\theta_k-\eta_kG_{\eta_k}(\theta_k)=\textbf{Proj}_{\Theta}(\theta_k-\eta_k \nabla_{\theta}f^{\text{MF-OMO}}(\theta_k)).
\end{equation}
Here {\color{black}$\eta_k>0$ is the step-size of the $k$-th iteration for which appropriate ranges are specified below}, and the projection operator $\textbf{Proj}_{\Theta}$ projects $L$ to the probability simplex, %an $SA(T+1)-1$-dimensional simplex, 
$z$ to a specialized polyhedra $\{z\,|\,z\geq0, \|z\|_1\leq SA(T^2+T+2)r_{\max}\}$, 
and $y$ to the $\ell_2$-normed ball with norm $S(T+1)(T+2)r_{\max}/2$. All these projections can be efficiently evaluated in linear time and (almost) closed-form (\cf %\cite{duchi2008efficient, liu2009efficient, chen2011projection,songsiri2011projection,wang2013projection,condat2016fast} 
\cite{duchi2008efficient, liu2009efficient,songsiri2011projection,condat2016fast}
for projection onto the probability simplex and the specialized polyhedra, and \cite{parikh2014proximal} for the projection onto the $\ell_2$-normed balls). For simplicity, we always assume that the initialization $\theta_0$ is feasible, \ie, $\theta_0\in\Theta$.

\paragraph{Convergence to Nash equilibrium solutions.}
%Typically only convergence to stationary points can be obtained for algorithms solving  nonconvex optimization problems. 
To find a Nash equilibrium solution of the mean-field game (\eqref{setup:MFG} and \eqref{consistency_pop_flow}) by  the PGD algorithm, we will need an additional definability assumption  stated below, {\color{black}which is one of the most commonly adopted assumptions in the existing convergence theory of nonconvex optimization.}
\begin{assumption}\label{definability}
For any $s\in\mathcal{S},a\in\mathcal{A}$, $P_t(s,a,L_t)$ and $r_t(s,a,L_t)$ (as functions of $L_t$) are both restrictions of definable functions on the log-exp structure\footnote{\color{black}For simplicity, below we say that a function is definable if it’s definable on the log-exp
structure.} to $\Delta(\mathcal{S}\times\mathcal{A})$.
\end{assumption}

%, and we omit them here for brevity. 
Definable functions cover a broad class  of (both smooth and nonsmooth) functions, including all semialgebraic functions, all analytic functions on definable compact sets\footnote{\color{black}A set is definable if it can be defined by the image/range of a definable function.}, and the exponential and logarithm functions. Moreover, any finite combination of definable functions via summation, subtraction, product, affine mapping, composition, (generalized) inversion, max and min, partial supremum and partial infimum, as well as reciprocal (restricted to a compact domain) inside their domain of definitions are definable. {\color{black}Generally speaking, definable functions include all functions that are ``programmable'', such as those that can be defined in \texttt{NumPy}, \texttt{SciPy} and \texttt{PyTorch}.} The precise definition of definability for both sets and functions and the log-exp structure  can be found in Appendix \ref{definability_appendix} %in the longer version of the paper \cite{guo2022mf} %\ref{definability_appendix} 
(see also \cite[Section 4.3]{attouch2010proximal} and \cite{coste2000introduction-ominimal}).

Under Assumptions \ref{C2smooth} and \ref{definability}, we will show that the PGD algorithm converges to Nash equilibrium solution(s) when the initialization is close. Moreover, if the initialization is close to some isolated Nash equilibrium solution (\eg, when there is a finite number of Nash equilibrium solutions), the iterates converge to that specific Nash equilibrium solution. As a result, different initializations will enable us to retrieve all possible Nash equilibrium solutions when there are finitely many of them. The proof is left to Appendix \ref{local_conv_global_proof}. %B.2 in the longer version of the paper \cite{guo2022mf} due to space limit. %\ref{local_conv_global_proof}.
%states that this is indeed achievable when the initialization is close. 
%XXX. Definability. 

\begin{theorem}[Convergence to Nash equilibrium solutions]\label{local_conv_global}
Under Assumptions \ref{C2smooth} and \ref{definability}, let $\theta_k=(y^k,z^k,L^k)$ {\rm(}$k\geq 0${\rm)} be the sequence generated by PGD \eqref{pgd_mfvmo} with $\eta_k=\eta\in(0,2/M)$. Then for any Nash equilibrium solution $(\pi^\star,L^\star)$, there exists $\epsilon_0>0$, such that for any $L^0\in\Delta(\mathcal{S}\times\mathcal{A})$ with $\|L^0-L^\star\|_1\leq \epsilon_0$, if we initialize PGD with $\theta_0=(y^0,z^0,L^0)$, where $y^0=[V_1^\star(L^0), V_2^\star(L^0), \dots,V_T^\star(L^0), -V_0^\star(L^0)]\in\mathbb{R}^{S(T+1)}$, $z^0=[z_{0,0},\dots,z_{0,T}]\in\mathbb{R}^{SA(T+1)}$, $z_{0,t}=[z_{0,t}^{1},\dots,z_{0,t}^{A}]$ {\rm(}$t\in\mathcal{T}${\rm)}, with 
\[
z_{0,t}^{a}=V_t^\star(L^0)-r_t(\cdot,a,L_t^0)-P_t^a(L_t^0)V_{t+1}^\star(L^0)\in\mathbb{R}^S,\quad t=0,\dots,T-1,\,a\in\mathcal{A}
\]
and 
$z_{0,T}^a=V_T^\star(L^0)-r_T(\cdot,a,L_T^0)\in\mathbb{R}^S$ {\rm(}$a\in\mathcal{A}${\rm)}, then
%$(\pi^0,L^0)$ is an $\epsilon_0$-MFNE, 
we have the following:
\begin{itemize}
    \item $\lim_{k\rightarrow\infty}{\bf dist}(\theta_k,\Theta^\star)=0$, and $\|L^k-L^\star\|_2\leq g(\epsilon_0)$ for any $k\geq 0$. Here %$C>0$ is some problem dependent constant, and
    $g:\mathbb{R}_+\rightarrow\mathbb{R}_+$ is a function such that $\lim_{\epsilon\rightarrow0}g(\epsilon)=0$.\footnote{The closed-form definition of $g$ is left to Appendix \ref{local_conv_global_proof} %B.2 in the longer version of the paper \cite{guo2022mf} due to space limit %\ref{local_conv_global_proof} 
    and can be found in \eqref{g_def} there.} %(42) there.} 
    {\color{black}Here ${\bf dist}$ is the point-set distance defined as ${\bf dist}(\theta_k,\Theta^\star)=\inf_{\theta\in\Theta^\star}\|\theta_k-\theta\|_2$.}
 %\eqref{g_def}.} %XXX.
    \item The limit point set of $\{\theta_k\}_{k\geq 0}$ is nonempty, and for any limit point $\bar{\theta}=(\bar{y},\bar{z},\bar{L})$ of the iteration sequence $\{\theta_k\}_{k\geq 0}$, any $\bar{\pi}\in\Pi(\bar{L})$ is a Nash equilibrium solution of  the mean-field game {\rm(}\eqref{setup:MFG} and \eqref{consistency_pop_flow}{\rm)}.  
    \item For any sequence of $\pi^k\in\Pi(L^k)$,  $\lim_{k\rightarrow\infty}\text{\rm Expl}(\pi^k)=0$.
\end{itemize} 

In addition, for any isolated Nash equilibrium solution $(\pi^\star,L^\star)$ (\ie, there exists $\epsilon$ such that any $(\pi,L)$ with $\|L-L^\star\|_2\leq \epsilon$ is not a Nash equilibrium solution), if we initialize PGD with $\theta_0=(y^0,z^0,L^0)$ in the same way as above, then $\lim_{k\rightarrow\infty}L^k=L^\star$ and $\lim_{k\rightarrow\infty}\text{\rm Expl}(\pi^k)=0$ for any sequence $\pi^k\in\Pi(L^k)$. %with $\theta_k=(y^k,z^k,L^k)$.
%$\exists\epsilon_0>0$, such that  for any $L^0\in\Delta(\mathcal{S}\times\mathcal{A})$ with $\|L^0-L^\star\|_1\leq\epsilon_0$, if we initialize PGD with $\theta_0=(y^0,z^0,L^0)$, where $y^0$ and $z^0$ are defined as in Theorem \ref{local_conv_global}, then $\lim_{k\rightarrow\infty}L^k=L^\star$ and $\lim_{k\rightarrow\infty}\text{Expl}(\pi^k)=0$ for any sequence $\pi^k\in\Pi(L^k)$. %with $\theta_k=(y^k,z^k,L^k)$.
\end{theorem}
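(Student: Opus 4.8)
The plan is to recognize \eqref{pgd_mfvmo} as projected-gradient descent on $F := f^{\text{MF-OMO}} + \iota_\Theta$ and to run the standard Kurdyka--{\L}ojasiewicz (KL) convergence machinery, but \emph{localized} around one carefully chosen \emph{global} minimizer so that the limit has objective value exactly $0$ and hence is a genuine Nash equilibrium by Theorem \ref{thm:mfvmo-full}. First I would fix the given Nash equilibrium $(\pi^\star,L^\star)$ and attach to it the specific global minimizer $\theta^\star=(y^\star,z^\star,L^\star)\in\Theta^\star$ built from the value functions and Bellman residuals of $\mathcal{M}(L^\star)$ as in Proposition \ref{interpretation_y_z}, so that $F(\theta^\star)=0$. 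The initialization in the statement is the \emph{same} construction evaluated at $L^0$; since value functions and Bellman gaps are continuous in $L$ (a consequence of Assumption \ref{C2smooth} and compactness of $\Theta$), the bound $\|L^0-L^\star\|_1\le\epsilon_0$ forces $\|y^0-y^\star\|_2$ and $\|z^0-z^\star\|_2$, and therefore $\|\theta_0-\theta^\star\|_2$, to be small, and likewise forces $F(\theta_0)\to 0$ as $\epsilon_0\to 0$: reading the expansion \eqref{expansion-mfvmo}, the Bellman-residual rows vanish identically by the construction of $(y^0,z^0)$, while the consistency rows and the complementarity row $z^{0\top}L^0$ vanish at $L^0=L^\star$ and are continuous in $L^0$.

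Next I would verify the two hypotheses driving the abstract KL theorem for descent sequences. Strong smoothness (Proposition \ref{strong_smooth_fact}) together with $\eta\in(0,2/M)$ yields the \emph{sufficient-decrease} inequality $f^{\text{MF-OMO}}(\theta_{k+1})\le f^{\text{MF-OMO}}(\theta_k)-(1/\eta-M/2)\|\theta_{k+1}-\theta_k\|_2^2$; convexity of $\Theta$ and the optimality of the projection give the \emph{relative-error} bound $\mathrm{dist}(0,\partial F(\theta_{k+1}))\le C\|\theta_{k+1}-\theta_k\|_2$ with $C$ depending only on $M,\eta$. Assumption \ref{definability} makes $P_t,r_t$ definable on the log-exp structure, hence $f^{\text{MF-OMO}}$---a finite composition of sums, products and squares of definable maps---is definable, and $\iota_\Theta$ is definable because $\Theta$ is a fixed semialgebraic set; therefore $F$ satisfies the KL inequality at every point. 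With these ingredients, the localized (capture) version of the KL convergence theorem applies: if $\theta_0$ is close enough to $\theta^\star$ and $F(\theta_0)$ small enough, the whole sequence $\{\theta_k\}$ stays inside a prescribed ball around $\theta^\star$, has finite length, and converges to a single critical point $\theta_\infty$ with $F(\theta_\infty)=F(\theta^\star)=0$, i.e.\ $\theta_\infty\in\Theta^\star$.

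From this single convergent limit the three bullets follow. Convergence $\theta_k\to\theta_\infty\in\Theta^\star$ gives $\mathrm{dist}(\theta_k,\Theta^\star)\to 0$; the capture radius, expressed through the desingularizing function and the quantity $\|\theta_0-\theta^\star\|_2+\varphi(F(\theta_0))$, furnishes the explicit bound $\|L^k-L^\star\|_2\le g(\epsilon_0)$ with $g(\epsilon_0)\to 0$. Boundedness of $\Theta$ makes the limit-point set nonempty, and since $\theta_\infty$ has objective $0$, Theorem \ref{thm:mfvmo-full} certifies that any $\bar\pi\in\Pi(\bar L)$ is a Nash equilibrium. Finally $f^{\text{MF-OMO}}(\theta_k)\to 0$, so setting $\epsilon_k=\sqrt{f^{\text{MF-OMO}}(\theta_k)}$ and invoking Theorem \ref{VMO-vs-MFNE} gives $\mathrm{Expl}(\pi^k)\le f(S,A,T,C_P,C_r,r_{\max})\,\epsilon_k+\epsilon_k^2\to 0$ for any $\pi^k\in\Pi(L^k)$. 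For the isolated case, shrinking $\epsilon_0$ keeps $\|L^k-L^\star\|_2$ below the isolation radius for all $k$; the limit $\theta_\infty\in\Theta^\star$ then corresponds to a Nash equilibrium whose mean-field flow lies in that ball, which by isolation must equal $L^\star$, yielding $L^k\to L^\star$.

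\textbf{Main obstacle.} I expect the crux to be the localized/capture step: ordinary KL theory only delivers convergence to \emph{some} critical point, which for this nonconvex objective could have positive value and thus fail to be a Nash equilibrium. The work is to calibrate the neighborhood radius and the threshold on $F(\theta_0)$ so that the descent trajectory cannot escape the basin of $\theta^\star$ and so that its limit inherits the value $0$; quantifying this trade-off is exactly what produces the modulus $g$ with $g(\epsilon_0)\to 0$, and it is also what the special $(y^0,z^0)$ initialization is engineered to make possible, by controlling $\|\theta_0-\theta^\star\|_2$ rather than merely $\|L^0-L^\star\|_1$.
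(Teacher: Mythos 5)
Your proposal is correct and follows essentially the same route as the paper's proof: construct the global minimizer $\theta^\star=(y^\star,z^\star,L^\star)$ from the value functions and Bellman residuals of $\mathcal{M}(L^\star)$, use definability to get a Kurdyka--{\L}ojasiewicz inequality in a neighborhood of $\theta^\star$, show via sufficient decrease and the relative-error bound for the projected-gradient step that the trajectory is captured in that neighborhood with the modulus $g(\epsilon_0)$, and rule out limit points with positive objective by combining stationarity with the KL inequality, after which Theorems \ref{thm:mfvmo-full} and \ref{VMO-vs-MFNE} yield the Nash-equilibrium and exploitability claims and the isolation argument is identical. The only difference is presentational --- you invoke the abstract localized KL descent theorem where the paper carries out the telescoping induction explicitly --- and you correctly identify the capture/value-zero step as the crux.
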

% \begin{remark}

% \end{remark}

% \begin{corollary}\label{local_conv_global_corollary}
% Take the same assumptions as in Theorem \ref{local_conv_global}.
% Then for any isolated MFNE $(\pi^\star,L^\star)$, $\exists\epsilon_0>0$, such that  for any $L^0\in\Delta(\mathcal{S}\times\mathcal{A})$ with $\|L^0-L^\star\|_1\leq\epsilon_0$, if we initialize PGD with $\theta_0=(y^0,z^0,L^0)$, where $y^0$ and $z^0$ are defined as in Theorem \ref{local_conv_global}, then $\lim_{k\rightarrow\infty}L^k=L^\star$ and $\lim_{k\rightarrow\infty}\text{Expl}(\pi^k)=0$ for any sequence $\pi^k\in\Pi(L^k)$. %with $\theta_k=(y^k,z^k,L^k)$.
% \end{corollary}

\subsection{Finite-time solvability for a class of mean-field games} 
For mean-field games with linear rewards and mean-field independent dynamics,  we will show that a Nash equilibrium solution can be found in finite time under the MF-OMO framework. Such a class of mean-field games include the LR (left-right) and RPS (rock-paper-scissors) problems in \cite{cui2021approximately}.

More precisely, consider mean-field games with 
{\color{black}$r_t(s,a,L_t)=\bar{r}_{s,a,t}+L_t^\top\bar{R}_{s,a,t}$},  $P_t(s'|s,a,L_t)=\bar{P}_{s',s,a,t}$ for some constants $\bar{r}_{s,a,t},\,\bar{P}_{s',s,a,t}\in\mathbb{R}$ and constant vectors {\color{black}$\bar{R}_{s,a,t}\in\mathbb{R}^{SA}$}. 
Here, $A_L$ is independent of  $L$, hence  denoted as {\color{black}$\bar{A}$ (to distinguish from the number of actions $A$)}, and $c_L$ is of the form {\color{black}$c_L=\bar{c}+\bar{C} L$}, where $\bar{c}\in\mathbb{R}^{SA(T+1)}$ is the vectorization of {\color{black}$[-\bar{r}_{s,a,t}]_{s\in\mathcal{S},a\in\mathcal{A},t\in\mathcal{T}}\in \mathbb{R}^{S\times A\times (T+1)}$} (with column-major order), and {\color{black}$\bar{C}\in\mathbb{R}^{SA(T+1)\times SA(T+1)}$} is defined as 
%{\color{blue}
% \[\color{blue}
% \bar{C}=[-R_{1,1,0}^\top, \cdots,-R_{S,1,0}^\top,\cdots,-R_{1,A,0}^\top, \cdots,-R_{S,A,0}^\top,\cdots,
% -R_{1,A,T}^\top, \cdots,-R_{S,A,T}^\top]^\top.
% \]
%}

% \[\color{blue}
% \bar{C}=
% \left[\begin{matrix}
% -R_{1,1,0}^\top &  0 & \cdots & 0 & \dots & 0\\
% \vdots & \vdots & \ddots &\vdots &\ddots &\vdots \\
% % -R_{S,1,0}^\top &  0 & \cdots & 0 & \dots & 0\\
% % \vdots & \vdots & \ddots &\vdots &\ddots &\vdots \\
% % -R_{1,A,0}^\top &  0 & \cdots & 0 & \dots & 0\\
% % \vdots & \vdots & \ddots &\vdots &\ddots &\vdots \\
% -R_{S,A,0}^\top &  0 & \cdots & 0 & \dots & 0\\
% 0 & -R_{1,1,1}^\top  & \cdots & 0 & \dots & 0\\
% \vdots & \vdots & \ddots &\vdots &\ddots &\vdots \\
% 0 & -R_{S,A,1}^\top  & \cdots & 0 & \dots & 0\\
% \vdots & \vdots & \ddots &\vdots &\ddots &\vdots \\
% 0 & 0  & \cdots & -R_{1,1,T}^\top & \dots & 0\\
% \vdots & \vdots & \ddots &\vdots &\ddots &\vdots \\
% 0 & 0  & \cdots & -R_{S,A,T}^\top & \dots & 0\\
% % -R_{1,A,T}^\top, \cdots,-R_{S,A,T}^\top
% \end{matrix}\right].
% \]
\[\color{black}
\bar{C}=\textbf{diag}\left(
\left[\begin{matrix}
-R_{1,1,0}^\top\\
\vdots\\
-R_{S,A,0}^\top
\end{matrix}
\right],
\left[\begin{matrix}
-R_{1,1,1}^\top\\
\vdots\\
-R_{S,A,1}^\top
\end{matrix}
\right],\dots,
\left[\begin{matrix}
-R_{1,1,T}^\top\\
\vdots\\
-R_{S,A,T}^\top
\end{matrix}
\right]
\right).
\]
By \eqref{mf-vmo-constr}, one can see that finding Nash equilibrium solution(s) of this class of mean-field games is equivalent to solving the following linear complementarity problem (LCP):
    \begin{equation*}
    \begin{array}{ll}
    \text{\rm minimize}_{y,z,L} \quad &0\\
     \text{\rm subject to}  &{\color{black}\left[\begin{array}{ll}
         0 & \bar{A} \\
         \bar{A}^\top & -\bar{C}
     \end{array}\right]}\left[\begin{array}{l}
          y  \\
          L 
     \end{array}\right]+\left[\begin{array}{l}
          0  \\
          z 
     \end{array}\right]=\left[\begin{array}{l}
          b  \\
          \bar{c} 
     \end{array}\right],\\
     %AL=b,\quad A^\top y+z-\Theta L=\bar{c},\\
        & z^\top L=0,\quad L\geq 0,\quad z\geq 0.
        \end{array}
    \end{equation*}
%A naive algorithm to solve this LCP is by brute force. More precisely, 
Notice that for any solution $(y,z,L)$, we have either $z_i=0$ or $L_i=0$ for any $i=1,\dots,SA(T+1)$. Hence it suffices to consider linear programs induced by fixing $z_{\mathcal{D}}=0$ and $L_{\mathcal{D}^c}=0$ for some subset $\mathcal{D}$ of $\{1,\dots,SA(T+1)\}$, which leads to a total of $2^{SA(T+1)}$ linear programs. Since a linear program can be solved to its global optimality in finite time (\eg, by simplex algorithms \cite{martin2012large}), we have the following proposition. %see 
\begin{proposition}\label{finite-convergence-linear-mfg}
Suppose that the mean-field game {\rm(}\eqref{setup:MFG} and \eqref{consistency_pop_flow}{\rm)}   has linear rewards and mean-field independent dynamics. Then its Nash equilibrium solution can be found in finite time. %a finite number of iterations.
\end{proposition}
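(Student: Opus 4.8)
The plan is to exploit the special structure of linear rewards and mean-field independent dynamics, which has already been set up in the excerpt, and reduce the problem to a finite enumeration of linear programs. First I would invoke Theorem \ref{thm:nash-opt}, which tells us that finding a Nash equilibrium solution is equivalent to solving the constrained optimization problem \eqref{mf-vmo-constr}. Under the stated assumptions, $A_L=\bar{A}$ is independent of $L$ and $c_L=\bar{c}+\bar{C}L$ is affine in $L$, so the KKT-based constraints in \eqref{mf-vmo-constr} become linear in the variables $(y,z,L)$ except for the single bilinear complementarity constraint $z^\top L=0$. This is exactly the linear complementarity problem (LCP) displayed just before the proposition statement, so the main content is to argue that an LCP of this form is solvable in finite time.

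The key combinatorial observation is that the complementarity constraint $z^\top L=0$, together with $z\geq 0$ and $L\geq 0$, forces $z_i L_i=0$ for every coordinate $i\in\{1,\dots,SA(T+1)\}$; that is, at least one of $z_i$ or $L_i$ must vanish. Hence I would enumerate over all subsets $\mathcal{D}\subseteq\{1,\dots,SA(T+1)\}$ and, for each such $\mathcal{D}$, fix $z_{\mathcal{D}}=0$ and $L_{\mathcal{D}^c}=0$. Once these coordinates are pinned to zero, the complementarity constraint is automatically satisfied, and the remaining feasibility conditions are all \emph{linear} equalities and inequalities in the free variables. Thus solving each fixed-support subproblem reduces to deciding feasibility of (and, if one wishes, optimizing a trivial objective over) a polyhedron, which is a standard linear program.

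The enumeration yields exactly $2^{SA(T+1)}$ linear programs, a finite number since $S$, $A$, and $T$ are all finite. Any linear program can be solved to global optimality in finitely many arithmetic operations, for instance by the simplex method \cite{martin2012large}, and feasibility can be certified or refuted in finite time. Since every solution $(y,z,L)$ of the original LCP must respect $z_i L_i=0$ coordinatewise, it lies in the feasible set of at least one of these $2^{SA(T+1)}$ linear programs; conversely, any feasible point of one of these linear programs satisfies the full LCP and hence, by Theorem \ref{thm:nash-opt}, yields a Nash equilibrium solution via any $\pi\in\Pi(L)$. Therefore scanning through the finite list of linear programs and returning the first feasible $(y,z,L)$ found produces a Nash equilibrium solution in finite time.

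I expect the only subtle point—rather than a genuine obstacle—to be the careful verification that the affine structure $c_L=\bar{c}+\bar{C}L$ keeps all constraints of \eqref{mf-vmo-constr} linear in $(y,z,L)$ after the support of $z$ and $L$ is fixed; in particular one must check that the term $A_L^\top y+z=c_L$ remains linear, which it does precisely because $\bar{A}$ is constant and $\bar{C}L$ is linear in $L$. Once this is confirmed, the finite-time claim follows immediately from the finiteness of the subset enumeration and the finite-time solvability of linear programs. No contractivity, monotonicity, or uniqueness assumptions enter anywhere in this argument.
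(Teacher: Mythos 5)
Your proposal is correct and follows essentially the same route as the paper: reduce to the LCP form of \eqref{mf-vmo-constr} using that $\bar{A}$ is constant and $c_L=\bar{c}+\bar{C}L$ is affine, observe that $z^\top L=0$ with $z,L\geq 0$ forces coordinatewise complementarity, and enumerate the $2^{SA(T+1)}$ support patterns, each giving a linear program solvable in finite time. No substantive differences from the paper's argument.
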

In practice, to solve the LCP efficiently by exploiting the linear complementarity structure, one may resort to pivoting procedures \cite{murty1997linear} or Lemke's algorithm \cite{adler2011linear}.

\section{Proofs of main results}\label{sec:proofs}

\subsection{Proof of Lemma \ref{condition_A_B_prime}}\label{subsec:proof-lemma3}
For any Nash equilibrium solution $(\pi,L)$, denote
$d_t^\star(s,a)=\mathbb{P}^{\pi,L}(s_t=s,a_t=a)$ for the probability distribution of any representative agent taking policy $\pi$ under
$L$.
We will first show that $\pi\in\Pi(d^\star)$ and $(d^\star,L)$ satisfies conditions (A) and (B).
Since $(\pi,L)$ is a Nash equilibrium solution, it satisfies \eqref{setup:MFG}. By Lemma \ref{prop:MDP_LP} and by considering the MDP for the representative agent, $\pi\in\Pi(d^\star)$ and condition (A) is satisfied for $d^\star$ and $L$. Condition (B) can be proved by induction. When $t=0$, $d_0^\star(s,a)=\mathbb{P}^{\pi,L}(s_0=s,a_0=a)=\mu_0\pi_0(a|s)=L_0(s,a)$ for any $s\in\mathcal{S}$ and any $a\in\mathcal{A}$, since \eqref{consistency_explicit_1} holds for $L_0$. Suppose $d_t^\star(s,a)=L_t(s,a)$ holds for all $t\leq t'$ and all $s\in\mathcal{S}$ and $a\in\mathcal{A}$. Then for any $s\in\mathcal{S}$ and any $a\in\mathcal{A}$, 
\begin{align*}
    d_{t'+1}^\star(s,a)&=\mathbb{P}^{\pi,L}(s_{t'+1}=s,a_{t'+1}=a)=\mathbb{P}^{\pi,L}(s_{t'+1}=s)\pi_{t'+1}(a|s)\\
    &=\pi_{t'+1}(a|s)\sum_{s'\in\mathcal{S}}\sum_{a'\in\mathcal{A}}\mathbb{P}^{\pi,L}(s_{t'}=s',a_{t'}=a',s_{t'+1}=s)\\
    &=\pi_{t'+1}(a|s)\sum_{s'\in\mathcal{S}}\sum_{a'\in\mathcal{A}}\mathbb{P}^{\pi,L}(s_{t'}=s',a_{t'}=a')P_{t'}(s|s',a',L_{t'})\\
    &=\pi_{t'+1}(a|s)\sum_{s'\in\mathcal{S}}\sum_{a'\in\mathcal{A}}d_{t'}(s',a')P_{t'}(s|s',a',L_{t'})\\
    &=\pi_{t'+1}(a|s)\sum_{s'\in\mathcal{S}}\sum_{a'\in\mathcal{A}}L_{t'}(s',a')P_{t'}(s|s',a',L_{t'})=L_{t'+1}(s,a),
\end{align*}
where the last equation holds by \eqref{consistency_explicit_2}. %This concludes the induction.

On the other hand, for any $(d^\star,L)$ satisfying conditions (A) and (B), let $\pi_t(a|s)=\frac{L_t(s,a)}{\sum_{a'\in\mathcal{A}}L_t(s,a')}$ when $\sum_{a'\in\mathcal{A}}L_t(s,a')>0$ and let $\pi_t(\cdot|s)$ be any probability vector if $\sum_{a'\in\mathcal{A}}L_t(s,a')=0$, we will show that $(\pi,L)$ is the Nash equilibrium solution of the mean-field game (\eqref{setup:MFG} and \eqref{consistency_pop_flow}). 

By Lemma \ref{prop:MDP_LP}, condition  \eqref{setup:MFG} holds. For condition \eqref{consistency_pop_flow}, since $d^\star=L$, $L$ satisfies \eqref{cons_occ} by replacing $d$ with $L$. Therefore, for any $s'\in\mathcal{S},t=0,1,\dots,T-1$, 
\begin{align*}
    \sum_{s\in\mathcal{S}}\sum_{a\in\mathcal{A}}L_t(s,a)P_t(s'|s,a,L_t)=\sum_{a\in\mathcal{A}}L_{t+1}(s',a).
\end{align*}
Multiplying both sides with $\pi_{t+1}(a'|s')$, and by the definition of $\pi$,  
\begin{align*}
 \pi_{t+1}(a'|s')\sum_{s\in\mathcal{S}}\sum_{a\in\mathcal{A}}L_t(a|s)P_t(s'|s,a,L_t)= L_{t+1}(s',a').  
\end{align*}
Hence condition \eqref{consistency_pop_flow}.\qed

\subsection{Proof of Proposition \ref{interpretation_y_z}}
%\begin{proof}%[Proof of Proposition \ref{interpretation_y_z}]
Since $y,z,L$ is a solution to \eqref{mf-vmo-constr}, $L\geq 0$, {\color{black}$\mathbf{1}^\top L_t=1$, $t\in\mathcal{T}$} (by $A_LL=b$), \ie, $L\in\Delta(\mathcal{S}\times\mathcal{A})$. Note that we use $\mathbf{1}$ to denote the all-one vector (with appropriate dimensions).  To show that $\hat{y},\hat{z},L$ solves \eqref{mf-vmo-constr}, it reduces to show that $A_L^\top \hat{y}+\hat{z}=c_L$, $\hat{z}\geq 0$ and $\hat{z}^\top L=0$. 

By writing $\hat{y}$ as $\hat{y}=[\hat{y}_0,\dots,\hat{y}_T]$ with $\hat{y}_t\in\mathbb{R}^S$, $\hat{z}$ as $\hat{z}=[\hat{z}_0,\dots,\hat{z}_T]$ with $\hat{z}_t\in\mathbb{R}^{SA}$, we have %and expanding $A_L^\top \hat{y}+\hat{z}$, we have
\[
A_L^\top \hat{y}+\hat{z}=
\left[\begin{array}{c}
W_0(L_0)^\top \hat{y}_0+Z^\top \hat{y}_T+\hat{z}_0,\\
W_1(L_1)^\top \hat{y}_1-Z^\top \hat{y}_0+\hat{z}_1,\\
\vdots\\
% -r_t(\cdot,\cdot,L_t)&=&W_t(L_t)^\top y_t-Z^Ty_{t-1}+z_t,\\
% &\vdots&\\
W_{T-1}(L_{T-1})^\top \hat{y}_{T-1}-Z^\top \hat{y}_{T-2}+\hat{z}_{T-1},\\
-Z^\top \hat{y}_{T-1}+\hat{z}_T.
\end{array}\right],
\]
which, by the definition of $W_t(L_t)$ and $Z$, becomes 
\[
A_L^\top \hat{y}+\hat{z}=
\left[\begin{array}{c}
P_0^1(L_0)\hat{y}_0+\hat{y}_T+\hat{z}_0^1\\
\vdots\\
P_0^A(L_0)\hat{y}_0+\hat{y}_T+\hat{z}_0^A\\
P_1^1(L_1)\hat{y}_1-\hat{y}_0+\hat{z}_1^1\\
\vdots\\
P_1^A(L_1)\hat{y}_1-\hat{y}_0+\hat{z}_1^A\\
\vdots\\
P_{T-1}^1(L_{T-1})\hat{y}_{T-1}-\hat{y}_{T-2}+\hat{z}_{T-1}^1\\
\vdots\\
P_{T-1}^A(L_{T-1})\hat{y}_{T-1}-\hat{y}_{T-2}+\hat{z}_{T-1}^A\\
-\hat{y}_{T-1}+\hat{z}_T^1\\
\vdots\\
-\hat{y}_{T-1}+\hat{z}_T^A
\end{array}\right].
\]
Now noticing that $\hat{y}_T=-V_0^\star(L)$, $\hat{y}_t=V_{t+1}^\star(L)$, % ($t=0,\dots,T-1$), 
$\hat{z}_T^a=V_T^\star(L)-r_T(\cdot,a,L_T)$, %($a\in\mathcal{A}$) 
and $\hat{z}_t^a=V_t^\star(L)-r_t(\cdot,a,L_t)-P_t^aV_{t+1}^\star(L)$ for $a\in\mathcal{A}$, $t=0,\dots,T-1$, we see
\[
A_L^\top \hat{y}+\hat{z}=[-r_0(\cdot,\cdot,L_0), \dots,-r_T(\cdot,\cdot,L_T)]=c_L.
\]

Moreover, we have $V_T^\star(L)=\max_{a\in\mathcal{A}}r_T(\cdot,a,L_T)$, and that 
\[
V_t^\star(L)=\max_{a\in\mathcal{A}}\left\{r_t(\cdot,a,L_t)+P_t^a(L_t)V_{t+1}^\star(L)\right\},
\]
implying that $\hat{z}\geq0$ by its definition. Note that these are the Bellman equations for the mean-field induced MDP $\mathcal{M}(L)$.  %by the Bellman equation, 

It remains to prove that $\hat{z}^\top L=0$. To see this, we first show that $\hat{y}_{t-1}\leq y_{t-1}$ for $t=1,\dots,T$ and $\hat{y}_{-1}\geq y_{-1}$ by backward induction on $t$ from $T$ to $0$. Here $\hat{y}_{-1}$ and $y_{-1}$ are defined as $\hat{y}_T$ and $y_T$, respectively.

Firstly, note that $A_L^\top y+z=c_L$ and $z\geq 0$. Hence similar to the expansions above, we have 
\[
z_T^a=y_{T-1}-r(\cdot,a,L_T),\quad z_0^a=-y_T-r_0(\cdot,a,L_0)-P_0^a(L_0)y_0,\quad \forall a\in\mathcal{A}, 
\]
and 
\[
z_t^a=y_{t-1}-r_t(\cdot,a,L_t)-P_t^a(L_t)y_t, \quad \forall a\in\mathcal{A},\, t=0,\dots,T-1.
\]
Now for the base step, since $z_T^a\geq 0$ for all $a\in\mathcal{A}$, we have 
\[
y_{T-1}\geq \max_{a\in\mathcal{A}}r(\cdot,a,L_T)=V_T^\star(L)=\hat{y}_{T-1}.
\]
For the induction step, suppose that $\hat{y}_{t}\leq y_{t}$ for some $0<t\leq T-1$. Then since $z_{t}^a\geq 0$ for all $a\in\mathcal{A}$, we have 
\begin{equation*}
\begin{split}
y_{t-1}&\geq \max_{a\in\mathcal{A}}\left\{r_{t}(\cdot,a,L_{t})+P_{t}^a(L_{t})y_{t}\right\}\geq \max_{a\in\mathcal{A}}\left\{r_t(\cdot,a,L_t)+P_t^a(L_t)\hat{y}_t\right\}\\
&= \max_{a\in\mathcal{A}}\left\{r_t(\cdot,a,L_t)+P_t^a(L_t)V_{t+1}^\star(L)\right\}=V_t^\star(L)=\hat{y}_{t-1}.
\end{split}
\end{equation*}
Finally, suppose that $\hat{y}_0\leq y_0$. Then since $z_0^a\geq 0$ for all $a\in\mathcal{A}$, 
\begin{equation*}
\begin{split}
y_{-1}=y_T&\leq \min_{a\in\mathcal{A}}\left\{-r_0(\cdot,a,L_0)-P_0^a(L_0)y_0\right\}\leq \min_{a\in\mathcal{A}}\left\{-r_0(\cdot,a,L_0)-P_0^a(L_0)\hat{y}_0\right\}\\
&=-\max_{a\in\mathcal{A}}\left\{r_0(\cdot,a,L_0)+P_0^a(L_0)V_1^\star(L)\right\}=-V_0^\star(L)=\hat{y}_T=\hat{y}_{-1}.
\end{split}
\end{equation*}
Therefore $\hat{y}_T\geq y_T$.

Now, since $y,z,L$ is a solution to problem \eqref{mf-vmo-constr}, we have 
\[
b^\top y = (A_LL)^\top y+L^\top z = L^\top(A_L^\top y+z)=L^\top c_L=c_L^\top L.
\]
And for any $y',z'$ satisfying $A_{L}^\top y'+z'=c_{L}$ and $z'\geq 0$, since $A_{L}L=b$ and $L\geq 0$, we have 
\begin{equation}\label{comp_gap_y_z}
c_{L}^\top L - b^\top y'=(A_{L}^\top y'+z')^\top L-L^\top A_{L}^\top y'=L^\top z'\geq 0.
\end{equation}
In particular, $b^\top y=c_L^\top L\geq b^\top y'$ for any $y'$ such that $A_{L}^\top y'+z'=c_{L}$ for some $z'\geq 0$. That is, $y$ (together with $z$) is the solution to the following linear optimization problem for fixed $L$:
\begin{equation}\label{YOPT}
\begin{array}{ll}
\text{maximize}_{y',z'} & b^\top y'\\
\text{subject to} & A_L^\top y'+z'=c_L, \quad z'\geq 0.
\end{array}
\end{equation}

Now taking $y'=\hat{y}$ and $z'=\hat{z}$, we see that $A_L^\top \hat{y}+\hat{z}=c_L$ and $\hat{z}\geq 0$ as proved above. Hence $\hat{y},\hat{z}$ is a feasible solution to problem \eqref{YOPT}, and $b^\top y\geq b^\top \hat{y}$. Since $\hat{y}_T\geq y_T$, 
\[
b^\top\hat{y}=\mu_0^\top \hat{y}_T\geq \mu_0^\top y_T= b^\top y.
\]
Combined, $b^\top y=b^\top \hat{y}$, hence by \eqref{comp_gap_y_z},
\[
L^\top \hat{z}=c_L^\top L-b^\top \hat{y}=c_L^\top L-b^\top y = 0. %\text{\qed}
\] 
\begin{flushright}
\qed
\end{flushright}
%\end{proof}

\subsection{Proof of Theorem \ref{VMO-vs-MFNE}}
It is established via the following perturbation lemma for general MDPs. %The proof can be found in \S \ref{sec:proofs}.
\begin{lemma}\label{perturbation}
Consider two finite MDPs\footnote{Finite MDP refers to MDPs with finite state and action spaces.} $\mathcal{M}^1$ and $\mathcal{M}^2$ with finite horizon, with their respective transition probabilities $\{p^1_t\}_{t\in\mathcal{T}}$ and $\{p^2_t\}_{t\in\mathcal{T}}$, rewards $\{r_t^1\}_{t\in\mathcal{T}}$ and $\{r_t^2\}_{t\in\mathcal{T}}$, expected total rewards $V^{1,\pi}$ and $V^{2,\pi}$ for any policy $\pi$. In addition, denote their corresponding optimal values as $V^{1,\star}$ and $V^{2,\star}$. Then we have 
\[
|V^{1,\pi}-V^{2,\pi}|\leq \dfrac{T(T+1)r_{\max}}{2}\|p^1-p^2\|_{\infty,1}+\|r^1-r^2\|_{1,\infty},
\]
and
\[
|V^{1,\star}-V^{2,\star}|\leq \dfrac{T(T+1)r_{\max}}{2}\|p^1-p^2\|_{\infty,1}+\|r^1-r^2\|_{1,\infty}.
\]
Here $r_{\max}$ is such that $|r_t^1(s,a)|\leq r_{\max}$ and $|r_t^2(s,a)|\leq r_{\max}$ for any $s\in\mathcal{S},a\in\mathcal{A}$. 
% , and 
% \[
% \begin{split}
% \|p^1-p^2\|_{\infty,1}&=\max_{s\in\mathcal{S},a\in\mathcal{A},t=0,\dots,T-1}\sum_{s'\in\mathcal{S}}|p_t^1(s'|s,a)-p_t^2(s'|s,a)|,\\
% \|r^1-r^2\|_{1,\infty}&=\sum_{t=0}^T\max_{s\in\mathcal{S},a\in\mathcal{A}}|r_t^1(s,a)-r_t^2(s,a)|.
% \end{split}
% \]
%and
\end{lemma}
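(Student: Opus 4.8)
The plan is to prove both inequalities by a single backward-induction (telescoping) argument on the per-step value functions, since the evaluation case and the optimality case differ only in whether a $\sum_a \pi_t(a|s)$ average or a $\max_a$ appears in the Bellman recursion. First I would introduce, for each MDP $\mathcal{M}^i$ ($i=1,2$), the time-indexed value functions $V_t^{i,\pi}(s)$ and $V_t^{i,\star}(s)$ defined by the usual backward Bellman recursions, with terminal values $V_T^{i,\pi}(s)=\sum_a \pi_T(a|s) r_T^i(s,a)$ and $V_T^{i,\star}(s)=\max_a r_T^i(s,a)$, so that $V^{i,\pi}=\sum_s \mu_0(s) V_0^{i,\pi}(s)$ and analogously for the optimal values. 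The elementary but crucial estimate I would record up front is $|V_{t+1}^{i,\pi}(s')|\le (T-t) r_{\max}$ (and the same for the starred version), since only the $T-t$ rewards indexed $t+1,\dots,T$ contribute.

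The core step is to control $\Delta_t(s):=V_t^{1,\pi}(s)-V_t^{2,\pi}(s)$, and its starred analogue. Subtracting the two Bellman equations and inserting the cross term $\sum_{s'} p_t^1(s'|s,a) V_{t+1}^{2,\pi}(s')$, I would split the one-step difference into three pieces: the reward gap $(r_t^1-r_t^2)(s,a)$, the propagated value gap $\sum_{s'} p_t^1(s'|s,a)\Delta_{t+1}(s')$, and the transition gap $\sum_{s'} (p_t^1-p_t^2)(s'|s,a) V_{t+1}^{2,\pi}(s')$. Using that $p_t^1(\cdot|s,a)$ is a probability vector (so the second piece is at most $\|\Delta_{t+1}\|_\infty$), that the transition gap is at most $(T-t)r_{\max}\|p_t^1-p_t^2\|_{\infty,1}$ by the value bound above, and---for the optimality case---the nonexpansiveness of the maximum, namely $|\max_a f(a)-\max_a g(a)|\le\max_a|f(a)-g(a)|$ which lets the identical three-term split go through, I obtain the uniform recursion $\|\Delta_t\|_\infty \le \|r_t^1-r_t^2\|_\infty + \|\Delta_{t+1}\|_\infty + (T-t)r_{\max}\|p^1-p^2\|_{\infty,1}$, valid for both value functions, with $\|\Delta_{T+1}\|_\infty=0$.

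Unrolling this recursion from $t=0$ down to the terminal step telescopes the $\|\Delta_{t+1}\|_\infty$ terms and yields $\|\Delta_0\|_\infty \le \sum_{t=0}^T \|r_t^1-r_t^2\|_\infty + r_{\max}\|p^1-p^2\|_{\infty,1}\sum_{t=0}^T (T-t)$. Recognizing $\sum_t \|r_t^1-r_t^2\|_\infty=\|r^1-r^2\|_{1,\infty}$ by definition and evaluating the Gauss sum $\sum_{t=0}^T(T-t)=T(T+1)/2$ gives exactly the claimed bound on $\|\Delta_0\|_\infty$; finally averaging against $\mu_0$ via $|V^{1,\pi}-V^{2,\pi}|=|\sum_s\mu_0(s)\Delta_0(s)|\le\|\Delta_0\|_\infty$ (and likewise for the optimal values) completes both inequalities simultaneously.

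I expect the only real subtlety---not so much a difficulty as a point requiring care---to be the add-and-subtract split that cleanly isolates the transition perturbation from the recursive value gap, together with the horizon bookkeeping that produces the factor $(T-t)$ and hence the sum $T(T+1)/2$. The optimality case then costs nothing extra beyond invoking nonexpansiveness of $\max_a$ so that the identical recursion survives, which is precisely why a single argument handles both bounds.
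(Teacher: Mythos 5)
Your proposal is correct and follows essentially the same argument as the paper: the same add-and-subtract decomposition of the one-step Bellman difference into reward, propagated-value, and transition-perturbation terms, the same $(T-t)r_{\max}$ bound on downstream values, and the same telescoping yielding the Gauss sum $T(T+1)/2$, with the optimal-value case handled via nonexpansiveness of the max exactly as in the paper's Part II. The only (immaterial) difference is which cross term you insert ($p^1$ against $V^{2}$ versus the paper's $p^2$ against $V^{1}$), which changes nothing in the resulting bound.
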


%\section{Proof of Lemma \ref{perturbation}}
%XXX \label{proof_of_perturbation_appendix}
%\begin{proof}[Proof of Lemma \ref{perturbation}]
%In the following, we prove the two inequalities in order. 
%~
\paragraph{Proof of Lemma \ref{perturbation}.}
The proof consists of two parts.

\textit{Part I: Values for fixed policy $\pi$.}
Let $V_t^{i,\pi}(s)$ be the total expected reward 
%value function/expected reward-to-go 
starting from state $s$ at time $t$ under policy $\pi$ in the MDP $\mathcal{M}^i$ ($i=1,2$). Clearly $V^{i,\pi}=\sum_{s\in\mathcal{S}}\mu_0(s)V_0^{i,\pi}(s)$ ($i=1,2$). 
%Let $\pi^{i,\star}$ be such that $V_t^{i,\pi^{i,\star}}(s)\geq V_t^{i,\pi}(s)$ for any policy $\pi$, horizon $t=0,\dots,T$ and state $s\in\mathcal{S}$, and denote $V_t^{i,\star}(s)=V_t^{i,\pi^{i,\star}}(s)$ for simplicity. Note that the existence of such a $\pi^{i,\star}$ is guaranteed by the dynamic programming principle. Then we have $V_i^\star=\sum_{s\in\mathcal{S}}\mu_0(s)V_0^{i,\star}(s)$ ($i=1,2$). 

Now by the dynamic programming principle, we have for $i=1,2$, $t=0,\dots,T-1$ and $s\in\mathcal{S}$, 
\[
V_t^{i,\pi}(s)=\sum_{a\in\mathcal{A}}\pi_t(a|s)r_t^i(s,a)+\sum_{a\in\mathcal{A}}\sum_{s'\in\mathcal{S}}\pi_t(a|s)p_t^i(s'|s,a)V_{t+1}^{i,\pi}(s'),
\]
and $V_T^{i,\pi}(s)=\sum_{a\in\mathcal{A}}\pi_t(a|s)r_T^i(s,a)$, 
implying 
\begin{equation}\label{recursive_bd}
\begin{split}
|V_t^{1,\star}(s)&-V_t^{2,\star}(s)|\leq\, \sum_{a\in\mathcal{A}}|r_t^1(s,a)-r_t^2(s,a)|\pi_t(a|s)\\
&+\sum_{a\in\mathcal{A}}\sum_{s'\in\mathcal{S}}\pi_t(a|s)\left|p_t^1(s'|s,a)V_{t+1}^{1,\pi}(s')-p_t^2(s'|s,a)V_{t+1}^{2,\pi}(s')\right|\\
\leq&\, \max_{a\in\mathcal{A}}\sum_{s'\in\mathcal{S}}|p_t^1(s'|s,a)-p_t^2(s'|s,a)||V_{t+1}^{1,\pi}(s')|\\
&+\sum_{a\in\mathcal{A}}\sum_{s'\in\mathcal{S}}\pi_t(a|s)p_t^2(s'|s,a)|V_{t+1}^{1,\pi}(s')-V_{t+1}^{2,\pi}(s')|\\
&+\max_{a\in\mathcal{A}}|r_t^1(s,a)-r_t^2(s,a)|\\
(a)\leq&\,(T-t)r_{\max}\max_{s\in\mathcal{S},a\in\mathcal{A}}\sum_{s'\in\mathcal{S}}|p_t^1(s'|s,a)-p_t^2(s'|s,a)|+\|V_{t+1}^{1,\pi}-V_{t+1}^{2,\pi}\|_{\infty}\\
&+\max_{s\in\mathcal{S},a\in\mathcal{A}}|r_t^1(s,a)-r_t^2(s,a)|.
\end{split}
\end{equation}
Here ($a$) uses the fact that $|V_{t+1}^{i,\pi}(s)|\leq (T-t)r_{\max}$, and $V_t^{i,\pi}$ is viewed as a vector in $\mathbb{R}^{S}$ when taking the $\ell_\infty$ norm. %and that XXX

By taking max over $s\in\mathcal{S}$ on the left-hand side of \eqref{recursive_bd} and telescoping the inequalities, we have 
\begin{equation*}
\begin{split}
\|V_0^{1,\pi}-V_0^{2,\pi}\|_{\infty}\leq& \sum_{t=0}^{T-1}(T-t)r_{\max}\max_{s\in\mathcal{S},a\in\mathcal{A}}\sum_{s'\in\mathcal{S}}|p_t^1(s'|s,a)-p_t^2(s'|s,a)|\\
&+\sum_{t=0}^T\max_{s\in\mathcal{S},a\in\mathcal{A}}|r_t^1(s,a)-r_t^2(s,a)|\\
\leq& \dfrac{T(T+1)r_{\max}}{2}\|p^1-p^2\|_{\infty,1}+\|r^1-r^2\|_{1,\infty}.
\end{split}
\end{equation*}
Here we use the fact that \[
V_{T}^{1,\pi}(s)-V_{T}^{2,\pi}(s)=\sum_{a\in\mathcal{A}}r_T^1(s,a)\pi_T(a|s)-\sum_{a\in\mathcal{A}}r_T^2(s,a)\pi_T(a|s).
\]%This completes the proof. 

%\medskip

\textit{Part II: Optimal values.}
Let $V_t^{i,\star}(s)$ be the  value function for MDP $\mathcal{M}_i$ ($i=1,2$) starting at time $t\in\mathcal{T}$. 
%Let $\pi^{i,\star}$ be such that $V_t^{i,\pi^{i,\star}}(s)\geq V_t^{i,\pi}(s)$ for any policy $\pi$, horizon $t=0,\dots,T$ and state $s\in\mathcal{S}$, and denote $V_t^{i,\star}(s)=V_t^{i,\pi^{i,\star}}(s)$ for simplicity. Note that the existence of such a $\pi^{i,\star}$ is guaranteed by the dynamic programming principle. 
Then we have $V^{i,\star}=\sum_{s\in\mathcal{S}}\mu_0(s)V_0^{i,\star}(s)$ ($i=1,2$). 

Now by the dynamic programming principle, we have for $i=1,2$, $t=0,\dots,T-1$ and $s\in\mathcal{S}$, 
\[
V_t^{i,\star}(s)=\max_{a\in\mathcal{A}}\left(r_t^i(s,a)+\sum_{s'\in\mathcal{S}}p_t^i(s'|s,a)V_{t+1}^{i,\star}(s')\right),
\]
which implies that 
%{\small
\begin{equation}\label{recursive_bd_2}
\begin{split}
|V_t^{1,\star}(s)&-V_t^{2,\star}(s)|\\
&\leq \max_{a\in\mathcal{A}}\left\{|r_t^1(s,a)-r_t^2(s,a)|+\sum_{s'\in\mathcal{S}}\left|p_t^1(s'|s,a)V_{t+1}^{1,\star}(s')-p_t^2(s'|s,a)V_{t+1}^{2,\star}(s')\right|\right\}\\
&\leq \max_{a\in\mathcal{A}}\sum_{s'\in\mathcal{S}}|p_t^1(s'|s,a)-p_t^2(s'|s,a)|V_{t+1}^{1,\star}(s')\\
&\quad+\max_{a\in\mathcal{A}}\sum_{s'\in\mathcal{S}}p_t^2(s'|s,a)|V_{t+1}^{1,\star}(s')-V_{t+1}^{2,\star}(s')|+\max_{a\in\mathcal{A}}|r_t^1(s,a)-r_t^2(s,a)|\\
(a)&\leq(T-t)r_{\max}\max_{s\in\mathcal{S},a\in\mathcal{A}}\sum_{s'\in\mathcal{S}}|p_t^1(s'|s,a)-p_t^2(s'|s,a)|+\|V_{t+1}^{1,\star}-V_{t+1}^{2,\star}\|_{\infty}\\
&\quad+\max_{s\in\mathcal{S},a\in\mathcal{A}}|r_t^1(s,a)-r_t^2(s,a)|.
\end{split}
\end{equation}
%}
Here 
 in ($a$) we use the fact that $V_{t+1}^{i,\star}(s)\in[0,(T-t)r_{\max}]$, and $V_t^{i,\star}$ is viewed as a vector in $\mathbb{R}^{S}$ when taking the $\ell_\infty$ norm. %and that XXX

By taking max over $s\in\mathcal{S}$ on the left-hand side of \eqref{recursive_bd_2} and telescoping the inequalities, we have 
\begin{equation*}
\begin{split}
\|V_0^{1,\star}-V_0^{2,\star}\|_{\infty}\leq& \sum_{t=0}^{T-1}(T-t)r_{\max}\max_{s\in\mathcal{S},a\in\mathcal{A}}\sum_{s'\in\mathcal{S}}|p_t^1(s'|s,a)-p_t^2(s'|s,a)|\\
&+\sum_{t=0}^T\max_{s\in\mathcal{S},a\in\mathcal{A}}|r_t^1(s,a)-r_t^2(s,a)|\\
\leq& \dfrac{T(T+1)r_{\max}}{2}\|p^1-p^2\|_{\infty,1}+\|r^1-r^2\|_{1,\infty}.
\end{split}
\end{equation*}
Here we use the fact that $V_{T}^{1,\star}(s)-V_{T}^{2,\star}(s)=r_T^1(s,a)-r_T^2(s,a)$. \qed %This completes the proof. 
%\end{proof}

\paragraph{Proof of Theorem \ref{VMO-vs-MFNE}.}
%\begin{proof}[Proof of Theorem \ref{VMO-vs-MFNE}]
% Define $\{\hat{L}_t\}_{t=0}^T$ such that $\hat{L}_0=L_0$ ...
% \end{proof}

% \begin{proof}
%XXX. Denote $C$ as a universal constant. 
%First define $\pi$ from $L$ by division, and then construct
%Let $\pi\in\Pi(L)$ and 
We first define
$d=\tilde{\Gamma}(\pi,L)$ for the $\pi\in\Pi(L)$, where $\tilde{\Gamma}(\pi,L)$ is recursively defined by $\tilde{\Gamma}(\pi,L)_0(s,a):=\mu_0(s)\pi_0(a|s)$ (for any $ s\in\mathcal{S},a\in\mathcal{A}$) and 
\[
\tilde{\Gamma}(\pi,L)_{t+1}(s,a):=\pi_{t+1}(a|s)\sum_{s'\in\mathcal{S}}\sum_{a'\in\mathcal{A}}\tilde{\Gamma}(\pi,L)_t(s',a')P_t(s|s',a',L_t),\quad \forall s\in\mathcal{S},a\in\mathcal{A}.
\]
Then $d$ is the occupation measure of policy $\pi$ for MDP $\mathcal{M}(L)$. 

\medskip

\noindent\textit{Step 1: Closeness between $d$ and $L$.}
By definition, $A_Ld=b$. 
We first prove by induction that for all $t$,
\begin{equation}\label{ineq:induction}
  \sum_{s\in\mathcal{S}}\sum_{a\in\mathcal{A}}|d_t(s,a)-L_t(s,a)|\leq \sqrt{S}(t+1)\epsilon.  
\end{equation}
When $t=0$, 
\begin{equation}\label{induction:init}
    \begin{split}
        \sum_{s\in\mathcal{S}}\sum_{a\in\mathcal{A}}|d_0(s,a)-L_0(s,a)|&=\sum_{s\in\mathcal{S}}\sum_{a\in\mathcal{A}}\pi_0(a|s)|\mu_0(s)-\sum_{a'\in\mathcal{A}}L_0(s,a')|\\
        &=\sum_{s\in\mathcal{S}}|\mu_0(s)-\sum_{a'\in\mathcal{A}}L_0(s,a')|\leq \sqrt{S}\epsilon,
    \end{split}
\end{equation}
where the last inequality is by $\|A_LL-b\|_2\leq \epsilon$. %This concludes the induction.
%In addition, since $\|A_LL-b\|_2\leq \epsilon^2$, we have $\|L-d\|_2\leq C\epsilon$. This can be proved by induction: when $t=0$, since $|\sum_{a\in\mathcal{A}}L_0(s,a)-\mu_0(s)|\leq C\epsilon$, therefore 
Now suppose \eqref{ineq:induction} holds for $t$. %Consider $t+1$,
Then by the construction of $\pi$ and $d$,
\begin{equation}\label{d_induced_consistency}
    d_{t+1}(s,a)=\pi_{t+1}(a|s)\sum_{s'\in\mathcal{S}}\sum_{a'\in\mathcal{A}}d_t(s',a')P_t(s|s',a',L_t),
\end{equation}
\begin{equation}
    L_{t+1}(s,a)=\pi_{t+1}(a|s)\sum_{a'\in\mathcal{A}}L_{t+1}(s,a').
\end{equation}
Therefore 
\begin{equation}\label{eq:bound-d-L}
    \begin{split}
        \sum_{s\in\mathcal{S}}&\sum_{a\in\mathcal{A}}|d_{t+1}(s,a)-L_{t+1}(s,a)|\\
        &=\sum_{s\in\mathcal{S}}\sum_{a\in\mathcal{A}}\pi_{t+1}(a|s)|\sum_{s'\in\mathcal{S}}\sum_{a'\in\mathcal{A}}d_t(s',a')P_t(s|s',a',L_t)-\sum_{a'\in\mathcal{A}}L_{t+1}(s,a')|\\
        &=\sum_{s\in\mathcal{S}}|\sum_{s'\in\mathcal{S}}\sum_{a'\in\mathcal{A}}d_t(s',a')P_t(s|s',a',L_t)-\sum_{a'\in\mathcal{A}}L_{t+1}(s,a')|\\
        &\leq \sum_{s\in\mathcal{S}}|\sum_{s'\in\mathcal{S}}\sum_{a'\in\mathcal{A}}\left(d_t(s',a')-L_t(s',a')\right)P_t(s|s',a',L_t)|\\
        &\qquad\qquad +\sum_{s\in\mathcal{S}}|\sum_{s'\in\mathcal{S}}\sum_{a'\in\mathcal{A}}L_t(s',a')P_t(s|s',a',L_t)-\sum_{a'\in\mathcal{A}}L_{t+1}(s,a')|\\
        &\leq \sqrt{S}(t+1)\epsilon + \sqrt{S}\epsilon=\sqrt{S}(t+2)\epsilon,
    \end{split}
\end{equation}
where the last inequality holds by using induction on the first term and the constraint on $L$ in $\|A_LL-b\|_2\leq \epsilon$. This completes the induction.
% By induction we have 
% \begin{equation}
%     \sum_{s\in\mathcal{S}}|\sum_{s'\in\mathcal{S}}\sum_{a'\in\mathcal{A}}\left(d_t(s',a')-L_t(s',a')\right)P_t(s|s',a',L_t)|\leq S(t+1)\epsilon
% \end{equation}

The above bound \eqref{eq:bound-d-L}, together with the fact that $0\leq z^\top L\leq \epsilon^2$, immediately implies that 
\begin{equation*}
\begin{split}
|z^\top d|&\leq |z^\top (L-d)|+|z^\top L|\leq \|z\|_1\|L-d\|_{\infty}+\epsilon^2\\
&\leq SA(T^2+T+2)r_{\max}\sqrt{S}(T+1)\epsilon+\epsilon^2.
\end{split}
\end{equation*}
%we have $|s^Td|=|s^T(L-d)|+|s^TL|\leq C\|s\|_2\epsilon$.
Hence by $A_Ld=b$ and $\|A_L^\top y+z-c_L\|_2^2\leq \epsilon^2$, we have 
\begin{equation*}
\begin{split}
|b^\top y-c_L^\top d|&=|d^\top A_L^\top y-c_L^\top d|\leq |d^\top z|+|d^\top (A_L^\top y+z-c_L)|\\
&\leq SA(T^2+T+2)r_{\max}\sqrt{S}(T+1)\epsilon+\epsilon^2+\|d\|_2\|A_L^\top y+z-c_L\|_2\\
&\leq SA(T^2+T+2)r_{\max}\sqrt{S}(T+1)\epsilon+\epsilon^2 + \sqrt{T}\epsilon.
\end{split}
%C\epsilon+C\|s\|_2\epsilon,
\end{equation*}
Here we use $\|d_t\|_1=1$ for any $t\in\mathcal{T}$ and $\|d_t\|_2\leq \|d_t\|_1$. 

\medskip

\noindent\textit{Step 2: Near-optimality of $d$ (and $\pi$) for $\mathcal{M}(L)$.} 
Now we show that $d$ is near-optimal for the MDP $\mathcal{M}(L)$. To see this, define $\Delta:=A_L^\top y+z-c_L$. Here $\Delta$ can be viewed both as a vector in $\mathbb{R}^{SA(T+1)}$ and a sequence of $T+1$ matrices $\{\Delta_t\}_{t\in\mathcal{T}}$ with $\Delta_t\in\mathbb{R}^{S\times A}$. Then we have $\|\Delta\|_2^2\leq \epsilon^2$. 

Consider the following linear program with variables $\hat{y},\hat{z}$:
\begin{equation}\label{yz_Delta}
\begin{array}{ll}
\text{maximize}_{\hat{y},\hat{z}} & b^\top\hat{y}\\
\text{subject to} & A_L^\top\hat{y}+\hat{z}=c_L+\Delta,\quad \hat{z}\geq 0,
\end{array}
\end{equation}
and denote $\hat{y}^{\star}, \hat{z}^\star$ as its optimal solution. Then since $y,z$ is its feasible solution, we have $b^\top y\leq b^\top \hat{y}^\star$. 

Note that the dual problem of \eqref{yz_Delta} can be written as
\begin{equation}\label{d_Delta}
\begin{array}{ll}
\text{minimize}_{\hat{d}} & (c_L+\Delta)^\top\hat{d}\\
\text{subject to} & A_L\hat{d}=b,\quad \hat{d}\geq 0.
\end{array}
\end{equation}
By Lemma \ref{prop:MDP_LP} and \eqref{cond_A_rewrite}, if $\hat{d}^\star$ solves \eqref{d_Delta}, then any $\hat{\pi}^\star\in\Pi(\hat{d}^\star)$ is an optimal policy for the MDP $\hat{\mathcal{M}}(L)$ with transitions $P_t(s'|s,a,L_t)$ and rewards $r_t(s,a,L_t)-\Delta_t(s,a)$ (with $L=\{L_t\}_{t\in\mathcal{T}}$ fixed), and the optimal value of the objective function of \eqref{d_Delta} is equal to the negative optimal value of the MDP $\hat{\mathcal{M}}(L)$. By the strong duality of linear programs, we see that $b^\top \hat{y}^\star=(c_L+\Delta)^\top \hat{d}^\star$ is equal to the negative optimal value of the MDP $\hat{\mathcal{M}}(L)$. Note that the components of $c_L$ are negative rewards, and that the existence of the optimal solution $\hat{d}^\star$ follows from the existence of optimal policies for any finite MDP with finite horizon and Lemma \ref{prop:MDP_LP} and \eqref{cond_A_rewrite}. 

Now, let $d^\star$ be an optimal solution to the following linear program:
\begin{equation}\label{d_exact}
\begin{array}{ll}
\text{minimize}_{d} & c_L^\top d\\
\text{subject to} & A_L d=b,\quad d\geq 0.
\end{array}
\end{equation}
Then again by Lemma \ref{prop:MDP_LP} and \eqref{cond_A_rewrite}, $c_L^\top d^\star$ is equal to the negative optimal value of the MDP $\mathcal{M}(L)$. Hence by applying Lemma \ref{perturbation} to the MDPs $\hat{\mathcal{M}}(L)$ and $\mathcal{M}(L)$, %(with perturbation only in terms of rewards), 
%we have 
\begin{equation*}
\begin{split}
|c_L^\top d^\star - (c_L+\Delta)^\top \hat{d}^\star|&\leq \|\Delta\|_{1,\infty}= \sum_{t=0}^T\max_{s\in\mathcal{S},a\in\mathcal{A}}|\Delta_t(s,a)|\leq \sum_{t\in\mathcal{T},s\in\mathcal{S},a\in\mathcal{A}}|\Delta_t(s,a)|\\
&\leq \sqrt{SA(T+1)}\epsilon.
\end{split}
\end{equation*}

Putting the above bounds together, we conclude that 
\begin{equation}\label{cd_gap}
\begin{split}
c_L^\top d-c_L^\top d^\star&=c_L^\top d-b^\top y+b^\top y-c_L^\top d^\star\\
&\leq c_L^\top d-b^\top y+b^\top \hat{y}^\star-c_L^\top d^\star\\
&= c_L^\top d-b^\top y+(c_L+\Delta)^\top \hat{d}^\star-c_L^\top d^\star\\
&\leq SA(T^2+T+2)r_{\max}\sqrt{S}(T+1)\epsilon+\epsilon^2 + \sqrt{T}\epsilon+\sqrt{SA(T+1)}\epsilon.
\end{split}
\end{equation}
Since $d$ is a feasible solution to \eqref{d_exact},  again by Lemma \ref{prop:MDP_LP}, \eqref{cd_gap} implies that $\pi$, by definition satisfying $\pi\in\Pi(d)$ from \eqref{d_induced_consistency}, is a $(C_{S,A,T}\epsilon+\epsilon^2)$-suboptimal solution to the MDP $\mathcal{M}(L)$, where 
$C_{S,A,T}=S^{\frac{3}{2}}A(T+2)^3r_{\max}+\sqrt{SA(T+1)}+\sqrt{T}$.

\medskip

%XXX. and hence $d$ is $XXX$-optimal for the MDP $\mathcal{M}(L)$ (by the LP formulation \eqref{eq:kkt}). 
\noindent\textit{Step 3: Bounding the gap between $L$ and $\Gamma(\pi)$.}
We then bound the difference between $L$ and $\Gamma(\pi)$ by proving the following for all $t$:
\begin{equation}\label{ineq:induction2}
    \sum_{s\in\mathcal{S}}\sum_{a\in\mathcal{A}}|\Gamma(\pi)_{t}(s,a)-L_{t}(s,a)|\leq \frac{(C_P+1)^{t+1}-1}{C_P}\sqrt{S}\epsilon.
\end{equation}
This is  again proved by induction. When $t=0$, it holds by noticing $\Gamma(\pi)_0(s,a)=d_0(s,a)$ and \eqref{induction:init}. Now suppose \eqref{ineq:induction2} holds for $t$, consider when $t+1$, by the definition of $\Gamma(\pi)$, we have 
\begin{equation}
    \Gamma(\pi)_{t+1}(s,a)=\pi_{t+1}(a|s)\sum_{s'\in\mathcal{S}}\sum_{a'\in\mathcal{A}}\Gamma(\pi)_t(s',a')P_t(s|s',a',\Gamma(\pi)_t).
\end{equation}
Then 
\begin{equation}
    \begin{split}
        \sum_{s\in\mathcal{S}}&\sum_{a\in\mathcal{A}}|\Gamma(\pi)_{t+1}(s,a)-L_{t+1}(s,a)|\\
        &=\sum_{s\in\mathcal{S}}|\sum_{s'\in\mathcal{S}}\sum_{a'\in\mathcal{A}}\Gamma(\pi)_t(s',a')P_t(s|s',a',\Gamma(\pi)_t)-\sum_{a'\in\mathcal{A}}L_{t+1}(s,a')|\\
        &\leq \sum_{s\in\mathcal{S}}|\sum_{s'\in\mathcal{S}}\sum_{a'\in\mathcal{A}}\Gamma(\pi)_t(s',a')(P_t(s|s',a',\Gamma(\pi)_t)-P_t(s|s',a',L_t)|\\
        &\qquad \qquad +\sum_{s\in\mathcal{S}}\sum_{s'\in\mathcal{S}}\sum_{a'\in\mathcal{A}}|\Gamma(\pi)_t(s',a')-L_t(s',a')|P_t(s|s',a',L_t)\\
        &\qquad\qquad +\sum_{s\in\mathcal{S}}|\sum_{s'\in\mathcal{S}}\sum_{a'\in\mathcal{A}}L_t(s',a')P_t(s|s',a',L_t)-\sum_{a'\in\mathcal{A}}L_{t+1}(s,a')|\\
        &\leq \sum_{s'\in\mathcal{S}}\sum_{a'\in\mathcal{A}}\Gamma(\pi)_{t}(s',a')\sum_{s\in\mathcal{S}}|P_t(s|s',a',\Gamma(\pi)_t)-P_t(s|s',a',L_t)|\\
        &\qquad\qquad + \sum_{s'\in\mathcal{S}}\sum_{a'\in\mathcal{A}}|\Gamma(\pi)_t(s',a')-L_t(s',a')|+\sqrt{S}\epsilon\\
        &\leq (C_P+1)\sum_{s'\in\mathcal{S}}\sum_{a'\in\mathcal{A}}|\Gamma(\pi)_t(s',a')-L_t(s',a')|+\sqrt{S}\epsilon\\
        &\leq (C_P+1)\frac{(C_P+1)^{t+1}-1}{C_P}\sqrt{S}\epsilon+\sqrt{S}\epsilon=\frac{(C_P+1)^{t+2}-1}{C_P}\sqrt{S}\epsilon.
    \end{split}
\end{equation}
This concludes the induction. Consequently, 
\begin{equation*}
\begin{split}
\|P^{\Gamma(\pi)}-P^{L}\|_{\infty,1}&=\max_{t\in\mathcal{T}}\|P_t^{\Gamma(\pi)}-P_t^{L}\|_{\infty,1}\leq \max_{t\in\mathcal{T}}C_P\|\Gamma(\pi)_t-L_t\|_1\\
&\leq ((C_P+1)^{T+1}-1)\sqrt{S}\epsilon,
\end{split}
\end{equation*}
and similarly, 
\begin{equation*}
\begin{split}
\|r^{\Gamma(\pi)}-r^L\|_{1,\infty}&=\sum_{t=0}^T\|r_t^{\Gamma(\pi)}-r_t^L\|_\infty\leq C_r \sum_{t=0}^T\|\Gamma(\pi)_t-L_t\|_1\\
&\leq C_r\dfrac{(C_P+1)^{T+2}-(T+2)C_P-1}{C_P^2}\sqrt{S}\epsilon.
\end{split}
\end{equation*}

Finally, by applying Lemma \ref{perturbation} to  the MDPs $\mathcal{M}(L)$ and $\mathcal{M}(\Gamma(\pi))$,  and utilizing the fact that $\pi$ is $(C_{S,A,T}\epsilon+\epsilon^2)$-suboptimal for the MDP $\mathcal{M}(L)$, we see that 
\begin{equation*}
\begin{split}
\text{Expl}(\pi)&=V_{\mu_0}^\star(\Gamma(\pi))-V_{\mu_0}^{\pi}(\Gamma(\pi)) \\
&=V_{\mu_0}^\star(\Gamma(\pi))-V_{\mu_0}^\star(L)+V_{\mu_0}^\star(L) - V_{\mu_0}^{\pi}(L)+V_{\mu_0}^{\pi}(L)-V_{\mu_0}^{\pi}(\Gamma(\pi))\\
&\leq T(T+1)r_{\max}((C_P+1)^{T+1}-1)\sqrt{S}\epsilon\\
&\quad +2C_r\dfrac{(C_P+1)^{T+2}-(T+2)C_P-1}{C_P^2}\sqrt{S}\epsilon +C_{S,A,T}\epsilon+\epsilon^2
\end{split}
\end{equation*}
Here $V_{\mu_0}^\star(L)=\max_{\pi'}V_{\mu_0}^{\pi'}(L)=\sum_{s\in\mathcal{S}}\mu_0(s)[V_0^{\star}(L)]_s$ is the optimal value of the MDP $\mathcal{M}(L)$ for any mean-field flow $L$. \qed

%Notice that $\Gamma(\pi)_0=\tilde{\Gamma}(\pi,L)_0=d_0$ and hence $|L_0-\Gamma(\pi)_0|\leq C\epsilon$. Then prove the bound between $L$ and $\Gamma(\pi)$ by induction.
%\end{proof}

% XXX. Double check the $\|y\|_2$ bound is not used throughout the proof and in general the dependency of the above results on the norms of $y$ and $z$.

%XXX. Prior on $s^\star$ for some MFNE and add a constraint on $\|s\|_2$? For practical purposes, use $\|s\|_1$ to make the constraint a ($k-1$-dimensional) simplex.  A trust-region algorithm to automatically adjust $s$?

\subsection{Proof of Theorem \ref{MFNE-vs-VMO}}
%\begin{proof}
The feasibility of $y,z,L$ is obvious from the definition. In particular, $L=\Gamma(\pi)$ implies that  $L\geq 0$ and {\color{black}$\mathbf{1}^\top L_t=1$, $t\in\mathcal{T}$}, and the same argument leading to Corollary \ref{y_z_bound} implies that $\|y\|_2\leq S(T+1)(T+2)r_{\max}/2$ and $\mathbf{1}^\top z\leq SA(T^2+T+2)r_{\max}$, and the proof of Proposition \ref{interpretation_y_z} and in particular, the Bellman optimality shows that $z\geq 0$. 

Furthermore, again by the construction and the proof of Proposition \ref{interpretation_y_z}, we also have $A_L L=b$ by taking $L=\Gamma(\pi)$ and $A_L^\top y+z=c_L$. Hence 
\[
f^{\text{MF-OMO}}(y,z,L)=\|A_LL-b\|_2^2+\|A_L^\top y+z-c_L\|_2^2+z^\top L=z^\top L.
\]
It remains to show that $z^\top L\leq \epsilon$. 
To see this, notice that 
\[
z^\top L=(c_L-A_L^\top y)^\top L=c_L^\top L-y^\top A_LL=c_L^\top L-b^\top y.
\]
Now recall that $L=\Gamma(\pi)$ implies that $L_t(s,a)=\mathbb{P}^{\pi,L}(s_t=s,a_t=a)$ for $t\in\mathcal{T}$ by the proof of Lemma \ref{condition_A_B_prime}. Hence
\begin{equation*}
\begin{split}
c_L^\top L&=-\sum_{s\in\mathcal{S},a\in\mathcal{A},t\in\mathcal{T}}r_t(s,a,L_t)L_t(s,a)\\
&=-\sum_{s\in\mathcal{S},a\in\mathcal{A},t\in\mathcal{T}}r_t(s,a,L_t)\mathbb{P}^{\pi,L}(s_t=s,a_t=a)=-V_{\mu_0}^\pi(L).
\end{split}
\end{equation*}
Combined with the fact that $b^\top y=-\sum_{s\in\mathcal{S}}\mu_0(s)[V_0^\star(L)]_s=-V_{\mu_0}^\star(L)$, we see that 
\[
z^\top L=c_L^\top L-b^\top y=V_{\mu_0}^\star(L)-V_{\mu_0}^{\pi}(L)=V_{\mu_0}^\star(\Gamma(\pi))-V_{\mu_0}^{\pi}(\Gamma(\pi))=\text{Expl}(\pi)\leq \epsilon.
\] \qed
%\end{proof}

\section{Extensions}\label{refined_mfne}
The MF-OMO framework can be  extended to other variants of mean-field games. This section details its extension to \textit{personalized} mean-field games; and its extension to multi-population mean-field games is straightforward and omitted here. 
%See also Appendix \ref{extensions_appendix} for extensions to multi-population MFGs and to infinite-time horizon discounted MFGs. 

Personalized mean-field games are mean-field games involving non-homogeneous players, which can be found in many applications. %Many applications involve non-homogeneous players. 
In such problems, every player is associated with some information (type) which characterizes the heterogeneity among players. %MFGs can be used to model such personalized problems 
Personalized mean-field games generalize mean-field games by incorporating the information (type) into the state of players, with heterogeneity in the initial distributions. To deal with the heterogeneity, one needs to define a ``stronger'' Nash equilibrium solution for these personalized mean-field games, where the policy sequence $\pi$ is optimal from any initial state given the mean-field flow. We say that the policy sequence $\pi=\{\pi_t\}_{t\in\mathcal{T}}$ and  a mean-field flow $L=\{L_t\}_{t\in\mathcal{T}}$ constitute a refined Nash equilibrium solution of the finite-time horizon personalized mean-field game, if the following conditions are satisfied.

% Solving refined MFNE is important in many situations. In particular, MFG can be used to model heterogeneous players. If every player is associated with some information (type) which characterizes the heterogeneity among players, one can incorporate the type into the state of the player. {\color{red}In this case, it is essential to solve for refined MFNE which applies for all types.}

% an MFG with personalization/typed states. Then mention the importance to allow for heterogeneous initialization (otherwise everyone would have a randomized type with the same distribution).  %and typed states; 

% In this section, we define a ``stronger'' NE for the MFG, where the policy $\pi$ is optimal from any initial state, when the population distribution is $L$.

\begin{definition}[Refined Nash equilibrium solution]\label{mfne-gen}
~
\begin{itemize}
    \item[1)] (Optimality) Fixing $\{L_t\}_{t\in\mathcal{T}}$, for any initial state $s\in\mathcal{S}$, $\{\pi_t\}_{t\in\mathcal{T}}$ solves the following optimization problem:
\begin{equation}\label{optimality_agent_refined}
\begin{array}{ll}
\text{maximize}_{\{\pi_t\}_{t\in\mathcal{T}}} & \mathbb{E}[\sum_{t=0}^Tr_t(s_t,a_t,L_t)|s_0=s]\\
\text{subject to} & s_{t+1}\sim P_t(\cdot|s_t,a_t,L_t),\, a_t\sim\pi_t(\cdot|s_t),\, t\in\mathcal{T}\backslash \{T\}, %=0,\dots,T-1,
\end{array}  
\end{equation}
\textit{i.e.}, $\{\pi_t\}_{t\in\mathcal{T}}$ is optimal for the representative agent given the mean-field flow $\{L_t\}_{t\in\mathcal{T}}$; 
\item[2)] (Consistency) Fixing $\{\pi_t\}_{t\in\mathcal{T}}$, the consistency of mean-field flow holds, \ie, %namely
\begin{equation}\label{consistency_pop_flow_refined}
\begin{split}
&L_t=\mathbb{P}_{s_t,a_t},\\
&\text{where } s_{t+1}\sim P_t(\cdot|s_t,a_t,L_t),\, a_t\sim\pi_t(\cdot|s_t),\, s_0\sim \mu_0,\, t\in\mathcal{T}\backslash \{T\}. %=0,\dots,T-1.
\end{split}
\end{equation}
\end{itemize} 
\end{definition}
% We also say that $\{\pi_t\}_{t\in\mathcal{T}}$ is an RMFNE of the finite-time horizon personalized MFG  %problem \ref{setup:MFG} 
% if there exists a mean-field flow $\{L_t\}_{t\in\mathcal{T}}$, so that $(\{\pi_t\}_{t\in\mathcal{T}},\{L_t\}_{t\in\mathcal{T}})$ constitute an RMFNE of the same problem. 

For personalized mean-field games, we introduce a modified version of exploitability. Instead of simply averaging value functions over $\mu_0$ (\cf \eqref{def:expl}), here the averaging is over a uniform initial distribution, which corresponds to the arbitrariness of the initial state in \eqref{optimality_agent_refined}. The exploitability is defined as follows:
\[
\text{Expl}^{\text{refined}}(\pi):=\dfrac{1}{S}\max_{\pi'}\sum_{s\in\mathcal{S}}\left([V_0^{\pi'}(\Gamma(\pi))]_s-[V_0^{\pi}(\Gamma(\pi))]_s\right).
\]
As in Section \ref{MFG_opt_idea}, $\pi$ is a refined Nash equilibrium solution if and only if  $\text{Expl}^{\text{refined}}(\pi)=0$. 
%The bounds and results should be the same (double check).

To find the refined Nash equilibrium solution, we need to find the ``refined'' optimal policy for the mean-field induced MDP as described in  \eqref{optimality_agent_refined}, which is optimal under any initial state $s_0=s\in\mathcal{S}$ (instead of an initial state with distribution $\mu_0$ as in \eqref{setup:MFG}).
%XXX. write primal-dual formulation for solving such NE. Equivalence, interpretation of $y,z$, bounds of $y,z$, algorithm guarantees.
The proposition below characterizes the linear program formulation for \eqref{optimality_agent_refined}, %a refined optimal policy of an arbitrary finite MDP, 
a counterpart of Lemma \ref{prop:MDP_LP}. For brevity,  for any $\nu_0\in\Delta(\mathcal{S})$, we denote $\mathbb{P}^{\pi,L,\nu_0}(\cdot)$ for the probability distribution of any representative agent taking policy $\pi$ under the mean-field flow 
$L$, \ie, the state and/or action distribution generated by $s_0\sim\nu_0$, $s_{t+1}\sim P_t(\cdot|s_t,a_t,L_t)$, $a_t\sim\pi_t(\cdot|s_t)$ for $t=0,\dots,T-1$. 
\begin{proposition}\label{prop:MDP_LP_gen}
Fix $\{L_t\}_{t\in\mathcal{T}}$. Suppose that $\{\pi_t\}_{t\in\mathcal{T}}$ is an $\epsilon$-suboptimal policy for \eqref{optimality_agent_refined} for any $s\in\mathcal{S}$. %is a refined optimal policy for the MDP $\mathcal{M}$. 
Define $\nu_0(s)=\frac{1}{S}$ for all $s\in\mathcal{S}$, $\nu_t(s):=\mathbb{P}^{\pi,L,\nu_0}(s_t=s)$, 
%with $\mathbb{P}^{\pi,L}(\cdot)$ denoting the probability distribution of any representative agent taking policy $\pi$ under $L$, 
and $d_t(s,a):=\nu_t(s)\pi_t(a|s)$ for $s\in\mathcal{S}$ and $a\in\mathcal{A}$. Then $\pi\in\Pi(d)$ and  $d=\{d_t\}_{t\in\mathcal{T}}$ is a feasible $\epsilon$-suboptimal solution to the following linear program:
\begin{equation}\label{lq:personalized}
  \begin{array}{ll}
      \text{maximize}_{d}\quad &\sum\limits_{t\in\mathcal{T}}\sum\limits_{s\in\mathcal{S}}\sum\limits_{a\in\mathcal{A}} d_t(s,a)r_t(s,a,L_t) \\
      \text{subject to} \quad &\sum\limits_{s\in\mathcal{S}}\sum\limits_{a\in\mathcal{A}}d_t(s,a)P_t(s'|s,a,L_t)=\sum\limits_{a\in\mathcal{A}}d_{t+1}(s',a),\\
      &\hspace{4.05cm}\forall s'\in\mathcal{S},t\in\mathcal{T}\backslash\{T\},\\
      % &\sum_{s\in\mathcal{S}}\sum_{a\in\mathcal{A}}d_t(s,a)=1,\quad \forall t\in\mathcal{T},\notag\\
     &\sum\limits_{a\in\mathcal{A}}d_0(s,a)=\frac{1}{S}, \quad \forall s\in\mathcal{S},\\
      &d_t(s,a)\geq 0,\quad \forall s\in\mathcal{S},a\in\mathcal{A},t\in\mathcal{T}.
    \end{array}
\end{equation}
Conversely, suppose that $d=\{d_t\}_{t\in\mathcal{T}}$ is a feasible $\epsilon$-suboptimal solution to the above linear program \eqref{lq:personalized}. Then for any $\pi\in\Pi(d)$, 
%If we define  $\pi_t(a|s):=\frac{d_t(s,a)}{\sum_{a'\in\mathcal{A}}d_t(s,a')}$ when $\sum_{a'\in\mathcal{A}}d_t(s,a')>0$ and let $\pi_t(\cdot|s)$ be any probability vector when $\sum_{a'\in\mathcal{A}}d_t(s,a')=0$, 
$\{\pi_t\}_{t\in\mathcal{T}}$ is an $S\epsilon$-suboptimal policy for \eqref{optimality_agent_refined} for any $s\in\mathcal{S}$. In addition, the optimal value of the objective function of the above linear program \eqref{lq:personalized} is equal to the average optimal value of \eqref{optimality_agent_refined} over $s\in\mathcal{S}$. 
%is a refined optimal policy for the original MDP $\mathcal{M}$. 
\end{proposition}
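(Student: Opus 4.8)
The plan is to reduce everything to the already-established Lemma~\ref{prop:MDP_LP}, applied to the mean-field induced MDP $\mathcal{M}(L)$ but with the \emph{uniform} initial distribution $\nu_0\equiv 1/S$ in place of $\mu_0$, and then to translate between the average-over-states notion of optimality (which the occupation-measure LP naturally encodes) and the per-state notion of optimality demanded by \eqref{optimality_agent_refined}. The only genuinely new ingredient, relative to Lemma~\ref{prop:MDP_LP}, is this translation, and it is precisely where the factor $S$ enters.

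First I would record the value identity. For a fixed flow $L$, evaluating the objective $\sum_{t,s,a} d_t(s,a) r_t(s,a,L_t)$ of \eqref{lq:personalized} at $d_t(s,a)=\nu_t(s)\pi_t(a|s)$ yields $\frac{1}{S}\sum_{s\in\mathcal{S}}[V_0^\pi(L)]_s$, the $\nu_0$-averaged return of $\pi$; this is the standard identity between occupation-measure objectives and expected returns. The constraint $\sum_a d_0(s,a)=\nu_0(s)=1/S$ and the flow constraints hold by the very definition of the occupation measure, and $\pi\in\Pi(d)$ follows because $\sum_{a'}d_t(s,a')=\nu_t(s)$, so that \eqref{eq:relation} recovers $\pi$ wherever $\nu_t(s)>0$. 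Hence Lemma~\ref{prop:MDP_LP}, applied to $\mathcal{M}(L)$ with initial distribution $\nu_0$, identifies the optimal value of \eqref{lq:personalized} with $\frac{1}{S}\sum_s[V_0^\star(L)]_s$. Since in a finite-horizon MDP a single dynamic-programming policy is simultaneously optimal from every initial state, $\frac{1}{S}\sum_s[V_0^\star(L)]_s$ is exactly the average over $s$ of the optimal values of \eqref{optimality_agent_refined}, which settles the final sentence of the proposition.

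For the forward direction, if $\pi$ is $\epsilon$-suboptimal for \eqref{optimality_agent_refined} from every $s$, then $[V_0^\star(L)]_s-[V_0^\pi(L)]_s\leq \epsilon$ for each $s$, so the averaged gap $\frac{1}{S}\sum_s\left([V_0^\star(L)]_s-[V_0^\pi(L)]_s\right)$ is at most $\epsilon$; by the value identity this is precisely the LP suboptimality of $d$, making $d$ feasible and $\epsilon$-suboptimal. The converse is where the main work lies. Given a feasible $\epsilon$-suboptimal $d$, Lemma~\ref{prop:MDP_LP} yields that any $\pi\in\Pi(d)$ has averaged suboptimality at most $\epsilon$, i.e. $\sum_s\left([V_0^\star(L)]_s-[V_0^\pi(L)]_s\right)\leq S\epsilon$. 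The crucial observation is that each summand is nonnegative, because $V_0^\star(L)\geq V_0^\pi(L)$ holds pointwise for finite-horizon MDPs; therefore, for every fixed $s$, $[V_0^\star(L)]_s-[V_0^\pi(L)]_s\leq \sum_{s'}\left([V_0^\star(L)]_{s'}-[V_0^\pi(L)]_{s'}\right)\leq S\epsilon$, which is exactly the claimed $S\epsilon$-suboptimality of $\pi$ from state $s$.

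The step I expect to be the main obstacle---or at least the one demanding the most care---is this passage from the average bound to the worst-case per-state bound. It is what forces the loss of the factor $S$ (under the uniform weighting $1/S$ all the suboptimality could, in principle, be concentrated at a single state), and it hinges essentially on the pointwise domination $V_0^\star(L)\geq V_0^\pi(L)$ to guarantee nonnegativity of each term. Everything else is a direct specialization of Lemma~\ref{prop:MDP_LP} to the uniform initial distribution, together with the elementary finite-horizon fact that the optimal value function is attained simultaneously from all initial states.
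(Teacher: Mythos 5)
Your proposal is correct and follows essentially the same route as the paper, which only sketches the argument as ``a direct combination of Lemma~\ref{prop:MDP_LP} and the classic fact that optimality under a fully supported initial distribution implies optimality from every initial state.'' Your write-up usefully makes explicit the one step the paper leaves implicit for the approximate case --- passing from the $\nu_0$-averaged gap to the per-state bound via nonnegativity of $[V_0^\star(L)]_s-[V_0^\pi(L)]_s$, which is exactly where the factor $S$ arises --- but this is a fleshed-out version of the intended proof, not a different one.
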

Note that instead of the specified initial distribution $\mu_0$ in Lemma \ref{prop:MDP_LP}, Proposition \ref{prop:MDP_LP_gen} chooses a special initial distribution where all components are equal to $\frac{1}{S}$. The proof is a direct combination of Lemma \ref{prop:MDP_LP} and the following classic result of MDP.
\begin{lemma}
If $\pi$ is an optimal policy for the MDP $\mathcal{M}$ with initial distribution $\nu_0$ satisfying $\nu_0(s)>0$ for all $s\in\mathcal{S}$, then $\pi$ is an optimal policy for the MDP $\mathcal{M}$ with any initial state and any initial distribution.
% XXX. refined optimal policy equivalent to optimal policy under $\mu_0$ with all positive entries.
\end{lemma}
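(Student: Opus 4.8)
The plan is to reduce the single averaged optimality condition under $\nu_0$ to $|\mathcal{S}|$ separate per-state optimality conditions, using a nonnegativity argument that hinges on the strict positivity $\nu_0(s)>0$. Write $[V_0^\pi]_s$ for the expected total reward of $\pi$ started from state $s$ at time $0$ in $\mathcal{M}$, and $[V_0^\star]_s=\max_{\pi'}[V_0^{\pi'}]_s$ for the optimal value from $s$. The only structural fact I would invoke is the standard dynamic-programming characterization of the optimal value function for finite-horizon finite MDPs, which gives the \emph{pointwise} dominance $[V_0^\pi]_s\le[V_0^\star]_s$ for every $s\in\mathcal{S}$, with equality for at least one (Markov deterministic) policy simultaneously across all states.

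First I would restate the hypothesis: optimality of $\pi$ under $\nu_0$ means the averaged values agree, $\sum_{s\in\mathcal{S}}\nu_0(s)[V_0^\pi]_s=\sum_{s\in\mathcal{S}}\nu_0(s)[V_0^\star]_s$, equivalently $\sum_{s\in\mathcal{S}}\nu_0(s)\bigl([V_0^\star]_s-[V_0^\pi]_s\bigr)=0$. The crux is then immediate: each summand $\nu_0(s)\bigl([V_0^\star]_s-[V_0^\pi]_s\bigr)$ is nonnegative, since $\nu_0(s)>0$ by assumption and $[V_0^\star]_s-[V_0^\pi]_s\ge0$ by the pointwise dominance. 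A sum of nonnegative terms equal to zero forces every term to vanish, and since $\nu_0(s)>0$ we conclude $[V_0^\pi]_s=[V_0^\star]_s$ for every $s\in\mathcal{S}$. This is exactly optimality of $\pi$ started from each individual initial state.

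Finally, for an arbitrary initial distribution $\mu\in\Delta(\mathcal{S})$ (in particular a point mass at any fixed state), I would conclude $\sum_{s\in\mathcal{S}}\mu(s)[V_0^\pi]_s=\sum_{s\in\mathcal{S}}\mu(s)[V_0^\star]_s$, so $\pi$ attains the optimal value under $\mu$ as well. There is no genuine obstacle in this argument; the only points requiring care are that the pointwise inequality $[V_0^\pi]_s\le[V_0^\star]_s$ must be invoked \emph{before} averaging against $\nu_0$, and that the strict positivity $\nu_0(s)>0$ for all $s$ is precisely the ingredient that promotes a single averaged equality into the full collection of per-state equalities.
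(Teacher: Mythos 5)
Your argument is correct and complete: the pointwise dominance $[V_0^\pi]_s\le[V_0^\star]_s$, combined with the averaged equality under a fully supported $\nu_0$, forces per-state equality, and the existence (via dynamic programming) of a single Markov policy attaining $[V_0^\star]_s$ simultaneously for all $s$ is exactly what justifies writing the optimal value under $\nu_0$ as $\sum_s\nu_0(s)[V_0^\star]_s$ in the first place. The paper offers no proof of this lemma, dismissing it as a ``classic result of MDP,'' so there is nothing to compare against; your write-up supplies precisely the standard argument the paper is implicitly invoking, with the right emphasis on where strict positivity of $\nu_0$ is used.
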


Then similar to Theorem \ref{thm:nash-opt} one can establish the corresponding equivalence between refined Nash equilibrium solutions and optimal solutions to a feasibility optimization problem. This is derived by replacing the $\mu_0$ in $b$ in Theorem \ref{thm:nash-opt} with $\frac{1}{S}{\bf 1}\in\mathbb{R}^{S}$. More precisely, based on the above result, one can characterize the optimality conditions of $\pi$ given $L$, the counterpart of condition (A) with $\mu_0$ replaced by the uniform distribution over $\mathcal{S}$. Then combined with the consistency condition for the mean-field flow \eqref{consistency_pop_flow_refined}, one can establish the corresponding equivalence between refined Nash equilibrium solutions and optimal solutions to the following feasibility optimization problem: %, as detailed below.
% \begin{theorem}\label{thm:nash-opt-gen}
% Solving a Nash equilibrium of the mean-field game problem is equivalent to solving the following feasibility optimization problem.
%     \begin{equation}
%     \begin{array}{ll}
%     \text{\rm minimize}_{y,z,L} \quad &0\\
%      \text{\rm subject to}  &A_LL=b',\quad A_L^\top y+z=c_L,\\
%         & z^\top L=0,\quad L\geq 0,\quad z\geq 0,
%         \end{array}
%     \end{equation}
% where $A_L,c_L$ are set as \eqref{kkt_param1} and \eqref{kkt_param2}, and $b'=[0,\dots,0,\frac{1}{|\mathcal{S}|}{\bf 1}]\in\mathbb{R}^{|\mathcal{S}|T+|\mathcal{S}|}$.
% Specifically, if $(\pi,L)$ is a refined MFNE of the mean-field game, then there exist some $y,z$ such that $(y,z,L)$ is a feasible solution to the above optimization problem. On the other hand, if $(y,z,L)$ is a feasible solution to the above optimization problem, let $\pi_t(a|s)=\frac{L_t(s,a)}{\sum_{a'\in\mathcal{A}}L_t(s,a')}$ when $\sum_{a'\in\mathcal{A}}L_t(s,a')>0$ and let $\pi_t(\cdot|s)$ be any probability vector if $\sum_{a'\in\mathcal{A}}L_t(s,a')=0$, then $(\pi,L)$ is a refined MFNE of the mean-field game.
% \end{theorem}
%\begin{theorem}\label{thm:nash-opt-gen}
% More precisely, solving a refined MFNE of the personalized MFG problem (\eqref{optimality_agent_refined} and \eqref{consistency_pop_flow_refined}) is equivalent to solving the following feasibility optimization problem:
    \begin{equation}\label{refined_mfne_opt}
    \begin{array}{ll}
    \text{\rm minimize}_{\pi,y,z,d,L} \quad &0\\
     \text{\rm subject to}  &A_Ld=b',\quad A_L^\top y+z=c_L,\\
        & z^\top d=0,\quad d\geq 0,\quad z\geq 0,\\
        &\pi_t(a|s)\sum_{a'\in\mathcal{A}}d_t(s,a')=d_t(s,a), \, \forall s\in\mathcal{S},a\in\mathcal{A},t\in\mathcal{T}\\
        & L_0(s,a)=\mu_0(s)\pi_0(a|s),\quad \forall s\in\mathcal{S},a\in\mathcal{A},\\
        % &   \sum_{s\in\mathcal{S}}\sum_{a\in\mathcal{A}}\sum_{a'\in\mathcal{A}} L_t(s,a')P_t(s'|s,a,L_t)\pi_t(a|s) \\
        % & \quad=\sum_{a\in\mathcal{A}}L_{t+1}(s',a),\quad \forall s'\in\mathcal{S},t\in\mathcal{T}\backslash \{T\},\\
        & L_{t+1}(s',a')=\pi_{t+1}(a'|s')\sum\limits_{s\in\mathcal{S}}\sum\limits_{a\in\mathcal{A}}L_t(s,a)P_t(s'|s,a,L_t), \\
        & \hspace{5.34cm} \forall s'\in\mathcal{S},t\in\mathcal{T}\backslash \{T\},\\
        & \sum_{a\in\mathcal{A}}L_0(s,a)=\mu_0(s), \quad \forall s\in\mathcal{S},\quad L\geq 0,
        %\in\mathcal{S},t\in\mathcal{T}\backslash\{T\},
        \end{array}
    \end{equation}
    where $A_L,c_L$ are set as \eqref{kkt_param1} and \eqref{kkt_param2}, and $b'=[0,\dots,0,\frac{1}{S}{\bf 1}]\in\mathbb{R}^{S(T+1)}$. More precisely, $(\pi,L)$ is a refined Nash equilibrium solution if and only if $(\pi,y,z,d,L)$ is a solution to \eqref{refined_mfne_opt} for some $d,y,z$. 
%\end{theorem}
%XXX. Should replace the (B)-like parts in \eqref{refined_mfne_opt} as \eqref{consistency_def}-like ones (i.e., no additional summation on actions).

Note that here the  condition (A) may not hold, therefore the consistency condition on mean-field flow can not be implied by $d=L$. As a result, the policy $\pi$ is directly included as part of the variables. % and so it does not need to be extracted from the mean-field $L$ or occupation measure $d$.  %, and hence the equivalence is more straightforward 

In addition, noticing that  $L_t(s,a)=\sum_{a'\in\mathcal{A}}L_t(s,a')\pi_t(a|s)$ after summing over $a'$ on both sides of the fifth row of the constraints in \eqref{refined_mfne_opt},
%if we define $e_t(s,a)=\sum_{a'\in\mathcal{A}}L_t(s,a')\pi_t(a|s)$
and denoting $b=[0^\top,\dots,0^\top,\mu_0^\top]^\top\in\mathbb{R}^{S(T+1)}$, %then $e=L$ and 
then the optimization problem \eqref{refined_mfne_opt} can %therefore 
be further simplified, hence the following theorem. 
%XXX Simplification of \eqref{refined_mfne_opt}: $b=[0,\dots,0,\mu_0]\in\mathbb{R}^{S(T+1)}$
\begin{theorem}\label{thm:nash-opt-gen}
Solving refined Nash equilibrium solution(s) of the personalized mean-field game {\rm(}\eqref{optimality_agent_refined} and \eqref{consistency_pop_flow_refined}{\rm)} is equivalent to solving the following feasibility optimization problem:
    \begin{equation}\label{refined_mfne_opt_simp}
    \begin{array}{ll}
    \text{\rm minimize}_{y,z,d,L} \quad &0\\
     \text{\rm subject to}  &A_Ld=b',\quad A_L^\top y+z=c_L,\\
        & z^\top d=0,\quad d\geq 0,\quad z\geq 0,\\
        & A_LL=b, \quad L\geq 0, \\
        & L_t(s,a)\sum\nolimits_{a'\in\mathcal{A}}d_t(s,a')=d_t(s,a)\sum\nolimits_{a'\in\mathcal{A}}L_t(s,a'),\\
        &\hspace{4.38cm} \forall s\in\mathcal{S},a\in\mathcal{A},t\in\mathcal{T}.
        % &\pi_t(a|s)\sum_{a'\in\mathcal{A}}d_t(s,a')=d_t(s,a), \quad \forall s\in\mathcal{S},a\in\mathcal{A},t\in\mathcal{T}\\
        % &   \sum_{s\in\mathcal{S}}\sum_{a\in\mathcal{A}}\sum_{a'\in\mathcal{A}} L_t(s,a')P_t(s'|s,a,L_t)\pi_t(a|s) \\
        % & \quad=\sum_{a\in\mathcal{A}}L_{t+1}(s',a),\quad \forall s'\in\mathcal{S},t\in\mathcal{T}\backslash \{T\},\\
        % & \sum_{a\in\mathcal{A}}L_0(s,a)=\mu_0(s), \quad \forall s\in\mathcal{S},\quad L\geq 0,
        %\in\mathcal{S},t\in\mathcal{T}\backslash\{T\},
        \end{array}
    \end{equation}
    where $A_L,c_L$ are set as \eqref{kkt_param1} and \eqref{kkt_param2}, and $b'=[0,\dots,0,\frac{1}{S}{\bf 1}]\in\mathbb{R}^{S(T+1)}$. More precisely, if $(\pi,L)$ is a refined Nash equilibrium solution, then $(y,z,d,L)$ is a solution to \eqref{refined_mfne_opt_simp} for some $y,z,d$. Conversely, if $(y,z,d,L)$ is a solution to \eqref{refined_mfne_opt_simp}, then for any $\pi\in\Pi(d)$, $(\pi,L)$ constitutes a refined Nash equilibrium solution.
\end{theorem}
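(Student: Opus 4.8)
The plan is to prove the two inclusions by leaning on the equivalence between refined Nash equilibrium solutions and the unsimplified feasibility problem \eqref{refined_mfne_opt} (already asserted above); it then suffices to show that \eqref{refined_mfne_opt} and \eqref{refined_mfne_opt_simp} share the same solution set once the explicit policy variable $\pi$ is projected out. Both systems agree on the block $A_L d = b'$, $A_L^\top y + z = c_L$, $z^\top d = 0$, $d\geq 0$, $z\geq 0$ encoding optimality of the representative agent under the uniform initial distribution (via Proposition \ref{prop:MDP_LP_gen}, the stated full-support MDP lemma, and the strong-duality/KKT argument used for Theorem \ref{thm:nash-opt}). Hence the entire content is the equivalence between the $\pi$-dependent constraints of \eqref{refined_mfne_opt} — the policy-extraction relation $\pi_t(a|s)\sum_{a'} d_t(s,a') = d_t(s,a)$ together with the three mean-field consistency rows for $L$ — and the pair ``$A_L L = b$ plus the bilinear compatibility constraint $L_t(s,a)\sum_{a'} d_t(s,a') = d_t(s,a)\sum_{a'} L_t(s,a')$'' appearing in \eqref{refined_mfne_opt_simp}.

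For the forward direction, given a solution $(\pi,y,z,d,L)$ of \eqref{refined_mfne_opt} I would first marginalize the consistency rows over the action index: summing $L_0(s,a)=\mu_0(s)\pi_0(a|s)$ and $L_{t+1}(s',a')=\pi_{t+1}(a'|s')\sum_{s,a} L_t(s,a)P_t(s'|s,a,L_t)$ over the free action and using $\sum_{a'}\pi_{t+1}(a'|s')=1$ collapses them exactly to the blocks $Z L_0 = \mu_0$ and $W_t(L_t)L_t = Z L_{t+1}$ of $A_L L = b$ with $b=[0,\dots,0,\mu_0^\top]^\top$. Writing $\nu_t(s)=\sum_{a'} d_t(s,a')$ and $\mu_t(s)=\sum_{a'} L_t(s,a')$, the policy-extraction relation and the consistency rows give the common factorizations $d_t(s,a)=\pi_t(a|s)\nu_t(s)$ and $L_t(s,a)=\pi_t(a|s)\mu_t(s)$; multiplying these yields $L_t(s,a)\nu_t(s)=d_t(s,a)\mu_t(s)$, i.e. the compatibility constraint. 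All remaining constraints are shared verbatim, so $(y,z,d,L)$ solves \eqref{refined_mfne_opt_simp}.

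For the converse, given a solution $(y,z,d,L)$ of \eqref{refined_mfne_opt_simp} I would select any $\pi\in\Pi(d)$, which makes the policy-extraction relation hold by construction and which is optimal for \eqref{optimality_agent_refined} at every initial state by the optimality block together with the stated MDP lemma. It remains to recover the factorized consistency \eqref{consistency_pop_flow_refined}. At every $(s,t)$ with $\nu_t(s)>0$ the compatibility constraint gives $L_t(s,a)=\pi_t(a|s)\mu_t(s)$, and combined with the marginal identities from $A_L L = b$ this reconstructs the consistency rows. The delicate case is $\nu_t(s)=0$, where $\pi_t(\cdot|s)$ is unconstrained and the compatibility constraint degenerates to $0=0$: here I must show $\mu_t(s)=0$, so that $L_t(s,\cdot)=0$ and the factorization holds trivially for \emph{any} choice of $\pi_t(\cdot|s)$.

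This last point is the main obstacle, and it is exactly where the full-support uniform initial distribution $b'=[0,\dots,0,\tfrac1S\mathbf 1]$ enters. I would prove by induction on $t$ that $\{s:\mu_t(s)>0\}\subseteq\{s:\nu_t(s)>0\}$: the base case holds since $\nu_0(s)=1/S>0$ for all $s$; for the step, if $\mu_{t+1}(s)>0$ then some $(s',a')$ has $L_t(s',a')>0$ and $P_t(s|s',a',L_t)>0$, whence $\mu_t(s')>0$, so $\nu_t(s')>0$ by induction, and then the factorizations at $(s',t)$ force $\pi_t(a'|s')>0$ and $d_t(s',a')=\pi_t(a'|s')\nu_t(s')>0$, which propagates through the same kernel to give $\nu_{t+1}(s)>0$. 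Hence $\nu_t(s)=0$ implies $\mu_t(s)=0$, closing the argument. Because $d$ and $L$ are driven by the \emph{same} mean-field-dependent kernel $P_t(\cdot|\cdot,\cdot,L_t)$, the propagation itself is routine; the only genuine subtlety is that $\Pi(d)$ leaves the policy free off the support of $d$, and the full support of $b'$ is precisely what prevents $L$ from escaping the support of $d$.
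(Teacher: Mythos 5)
Your proposal is correct and follows the route the paper intends: establish the KKT/complementary-slackness block as in Theorem \ref{thm:nash-opt} and Proposition \ref{prop:MDP_LP_gen}, then show that eliminating the explicit policy variable from \eqref{refined_mfne_opt} yields exactly the constraints of \eqref{refined_mfne_opt_simp}. The forward direction (marginalizing the consistency rows to get $A_LL=b$ and multiplying the two factorizations $d_t(s,a)=\pi_t(a|s)\nu_t(s)$, $L_t(s,a)=\pi_t(a|s)\mu_t(s)$ to get the bilinear compatibility constraint) matches the paper's one-line justification. Where you go beyond the paper is in the converse: the paper's remark that the last constraint amounts to $\Pi(d)=\Pi(L)$ glosses over precisely the degenerate case you isolate, namely that at a state with $\nu_t(s)=\sum_{a'}d_t(s,a')=0$ the compatibility constraint is vacuous and $\pi_t(\cdot|s)$ is unconstrained, so one must separately show $\mu_t(s)=\sum_{a'}L_t(s,a')=0$ there for the conclusion to hold for \emph{every} $\pi\in\Pi(d)$. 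Your induction $\{s:\mu_t(s)>0\}\subseteq\{s:\nu_t(s)>0\}$ closes this correctly: the base case uses the full support of $b'$, and the step uses that $A_Ld=b'$ and $A_LL=b$ propagate both measures through the \emph{same} kernel $P_t(\cdot|\cdot,\cdot,L_t)$, with the compatibility constraint converting positivity of $L_t(s',a')$ into positivity of $d_t(s',a')$. This is a genuinely necessary addition rather than a stylistic one; the theorem as stated would not follow without it, and your argument supplies it with no gaps.
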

Note that the last row of constraints in \eqref{refined_mfne_opt_simp} is equivalent to requiring that $\Pi(d)=\Pi(L)$, where the equality is in terms of sets.

{\color{black}
\section{Numerical experiments}
In this section, we present numerical results of  algorithms based on the MF-OMO optimization framework. In the first part, MF-OMO is compared with the existing state-of-the-art algorithms for solving MFGs on the SIS problem introduced in \cite{cui2021approximately}. In the second part, it is shown that different initializations of MF-OMO algorithms converge to different NEs in the example introduced in Section \ref{prob_setup}.

\subsection{Comparison with existing algorithms}
In this section, we focus on the SIS problem introduced in \cite{cui2021approximately}. There are a large number of agents who choose to either social distance or to go out at each time step. Susceptible (S) agents who choose to go out may get infected (I), with probability proportional to the number of infected agents, while susceptible agents opt for social distancing will stay healthy. In the meantime, infected agents recover with certain probability at each time step. Agents in the system aim at finding the best strategy to minimize their costs induced from social distancing and being infected. The  parameters remain the same as in \cite{cui2021approximately}. 

Figures \ref{fig:sis_50} and \ref{fig:sis_100} show the comparisons between MF-OMO and existing algorithms on this SIS problems with $T=50$ and $T=100$, respectively. Here, MF-OMO is tested with different optimization algorithms including PGD (\cf \S\ref{solve_mfvmo}), Adam \cite{kingma2014adam}, and NAdam \cite{dozat2016incorporating} against online mirror descent (OMD) \cite{perolat2021scaling}, fictitious play (FP) \cite{perrin2020fictitious}, and prior descent (PD) \cite{cui2021approximately} with different choices of hyperparameters.\footnote{In the figure legends, the numbers after the OMD and FP algorithms are the step-sizes/learning-rates of these algorithms, while the number pairs after the PD algorithms are the temperatures and inner iterations of PD, respectively.} To make fair comparisons,  the maximum number of iterations for each algorithm are set differently so that the total runtime are comparable and that MF-OMO algorithms are given the smallest budget in terms of both the total number of iterations and the total runtime.  %of MF-OMO algorithms are among the shortest of all. 
With the same 
uniform initialization, it is clear from the figures that MF-OMO with Adam outperforms the remaining algorithms. The normalized exploitability\footnote{Normalized exploitability is defined to be exploitability devided by the initial exploitability.} of MF-OMO Adam quickly drops below $10^{-2}$ in terms of both the number of iterations and the runtime. All algorithms except for three variants of MF-OMO fail to achieve $10^{-2}$ for both $T=50$ and $T=100$.

\begin{figure}[h]
\centering
\begin{subfigure}{0.48\textwidth}
\includegraphics[width=2.46in, height=1.6in]{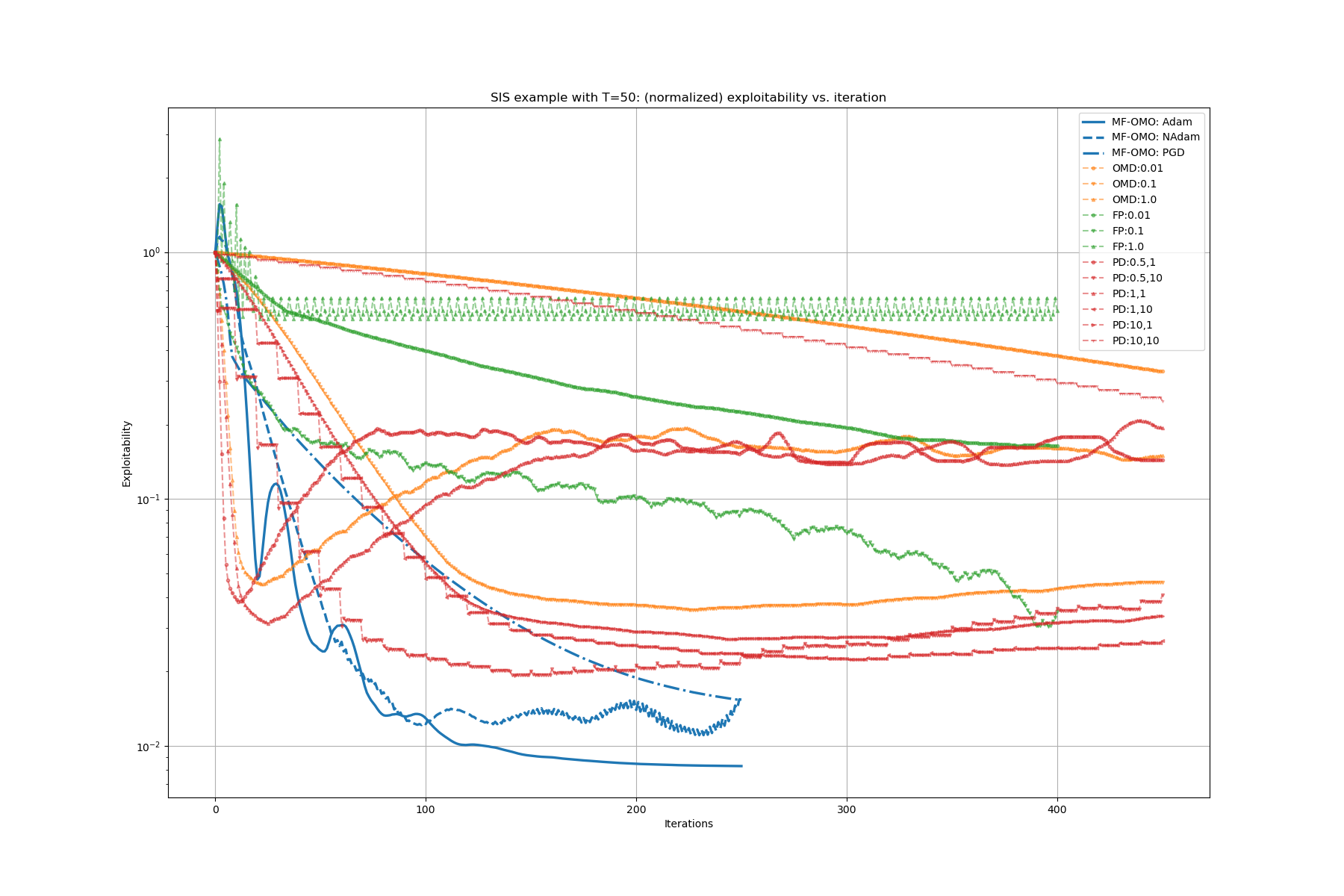}
\caption{Convergence against iterations}
\label{fig:sis_iter_50} 
\end{subfigure}
\begin{subfigure}{0.48\textwidth}
\includegraphics[width=2.46in, height=1.6in]{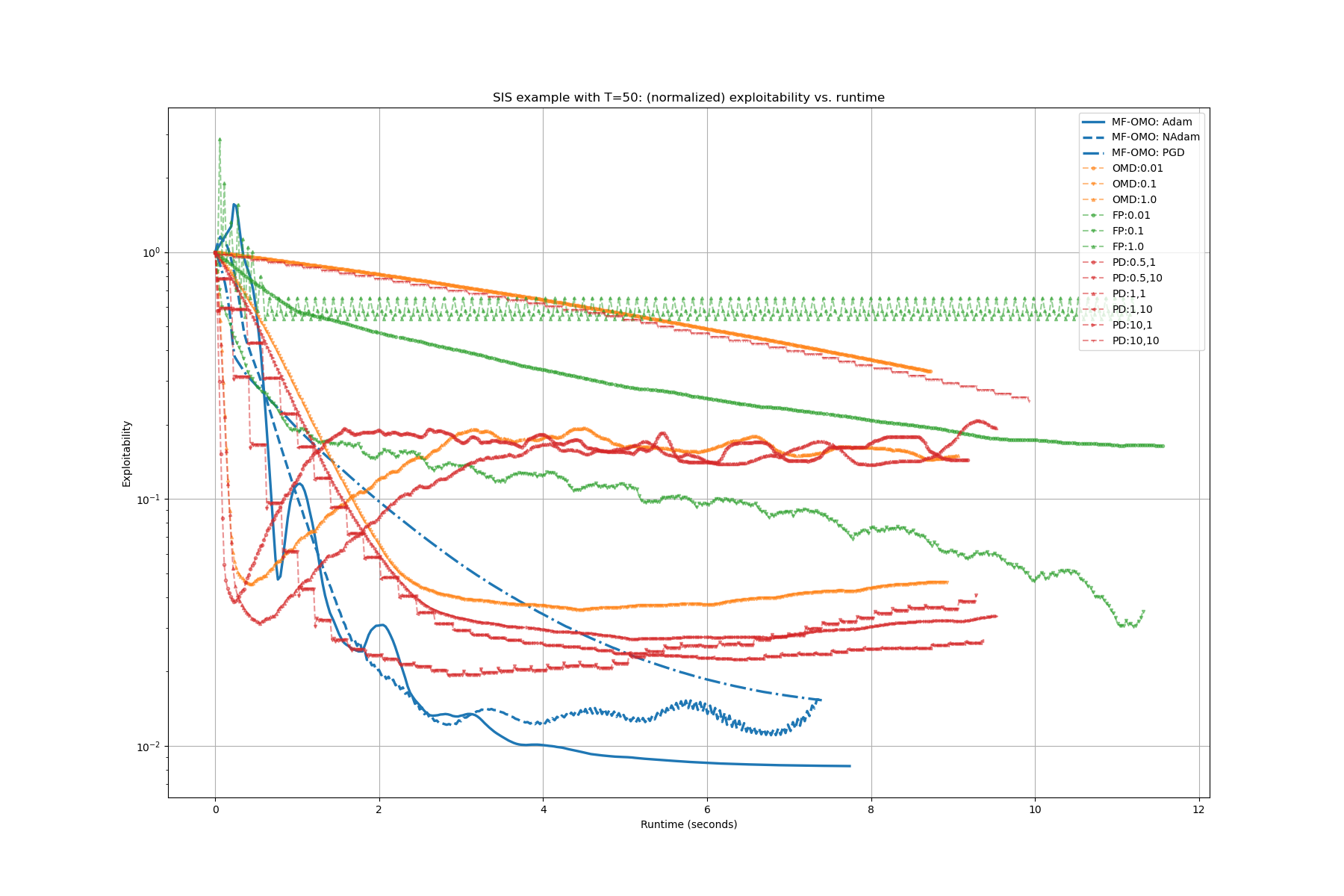}
\caption{Convergence against runtime}
\label{fig:sis_time_50} 
\end{subfigure}
\caption{Comparison of different algorithms on SIS problem with $T=50$.}
\label{fig:sis_50}
\end{figure}

\begin{figure}[h]
\centering
\begin{subfigure}{0.48\textwidth}
\includegraphics[width=2.46in, height=1.6in]{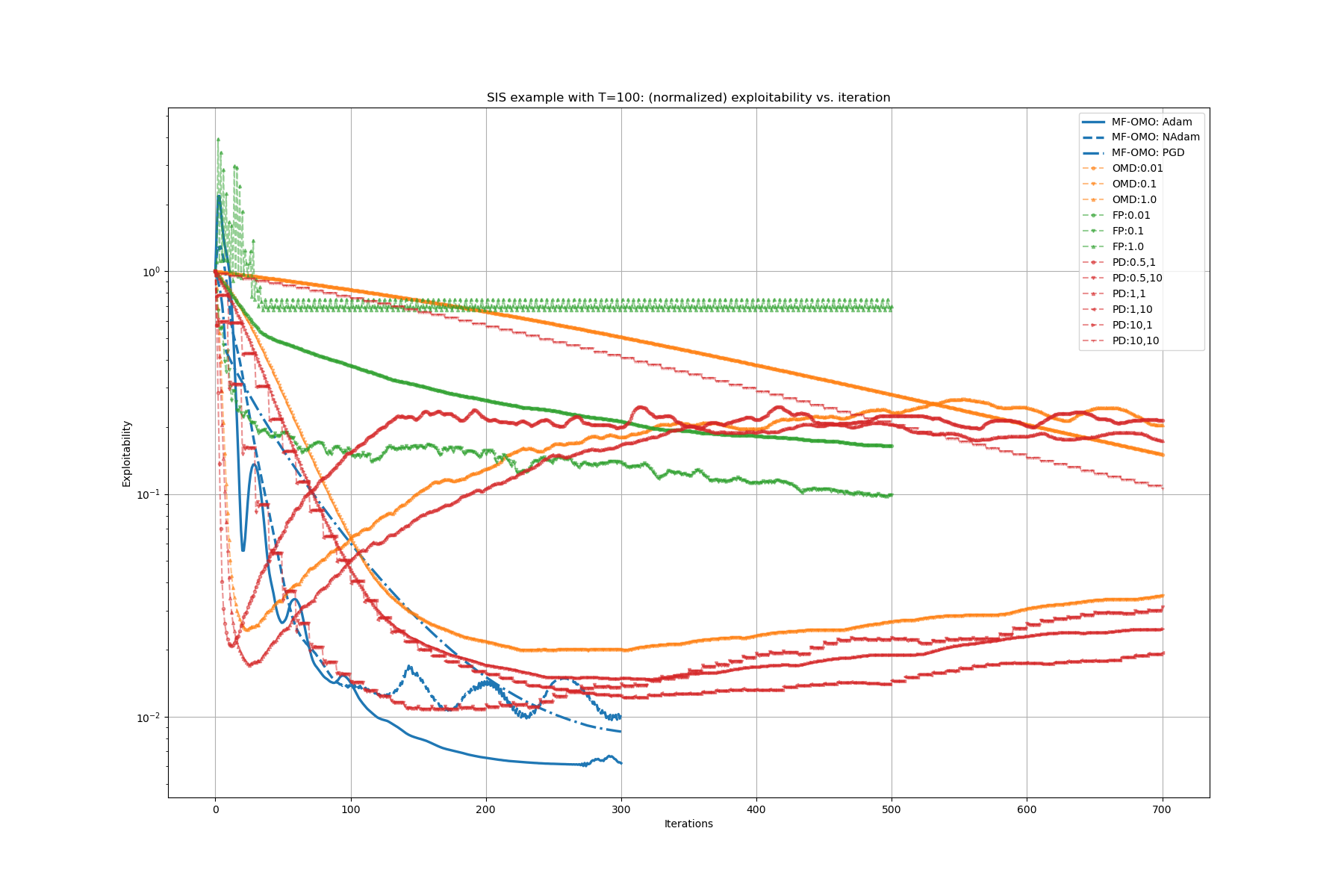}
\caption{Convergence against iterations}
\label{fig:sis_iter_100} 
\end{subfigure}
\begin{subfigure}{0.48\textwidth}
\includegraphics[width=2.46in, height=1.6in]{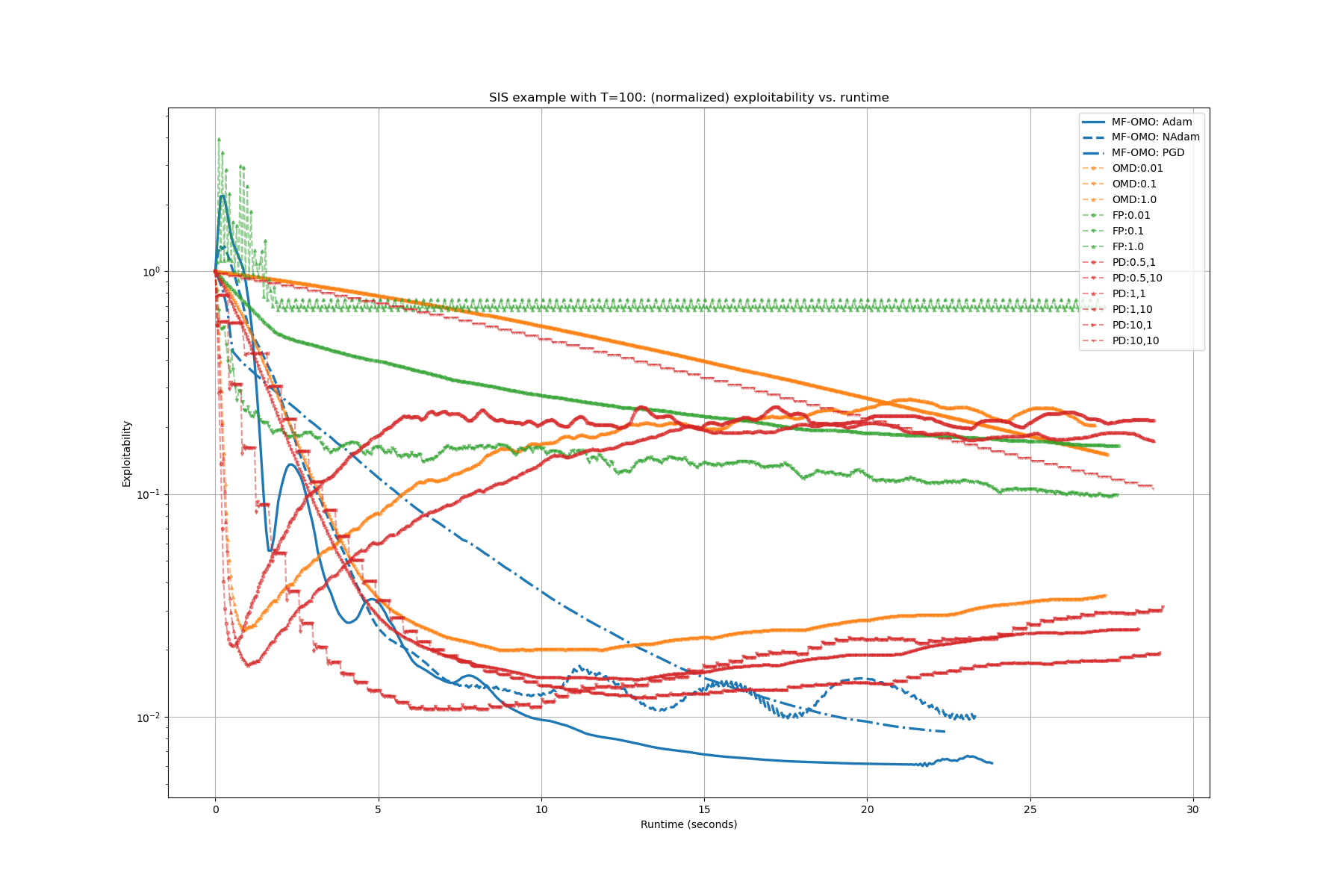}
\caption{Convergence against runtime}
\label{fig:sis_time_100} 
\end{subfigure}
\caption{Comparison of different algorithms on SIS problem with $T=100$.}
\label{fig:sis_100}
\end{figure}

\subsection{Multiple NEs}
In this section, we focus on the problem introduced in Section \ref{prob_setup} and show how algorithms based on the MF-OMO framework converge with different initializations. We choose $S=A=n=5$, $T=10$, $r^1=r^2=r^3=1.5$; and $r^4,\,r^5$, and $\{C_t\}_{t=1}^{10}$ are all independently sampled uniformly between $0$ and $1$. %, and $\{C_t\}_{t=0}^{9}$ are also independently sampled uniformly between $0$ and $1$. %As a result, $L_t$ 
%The parameters in this problem are set as XXX.  

From discussions in Section \ref{prob_setup}, there are at least 3 different NEs, which correspond to agents staying in state 1, 2 and 3, respectively. Here, several different sets of random initializations are tested, denoted by RI$(i,\epsilon)$, which represents initializations randomly generated in the $\epsilon$ neighborhood of the $i$-th NE $(i=1,2,3)$. Each RI$(i,\epsilon)$ set consists of 20 independent samples, and the convergence behavior of MF-OMO NAdam algorithm is recorded in Table \ref{table:diff init}.\footnote{We choose NAdam as it converges significantly faster than other algorithms such as PGD and Adam for this specific problem, and our primary goal  is to study the behavior after the convergence.} For each neighborhood of size $\epsilon$ and NE $i$, the convergence behavior in RI$(i,\epsilon)$ is categorized using the following criteria: $p_0$ counts the proportion of samples whose normalized exploitabilities do not drop to $10^{-3}$ after 400 iterations; $p_1$ counts the proportion of samples that converge to an (approximate) NE (with normalized exploitability below $10^{-3}$) closest to $i$-th NE among the three NEs; $p_2$ counts the proportion of samples that converge to an (approximate) NE closer to the other two NEs.

\begin{table}
\centering
  \begin{tabular}{c|ccc|ccc|ccc}%{|l|l|l|l|l|l|l|l|l|l|}
    \hline\hline
    \multirow{2}{*}{$\epsilon$} &
      \multicolumn{3}{c|}{NE 1} &
      \multicolumn{3}{c|}{NE 2} &
      \multicolumn{3}{c}{NE 3} \\
    & $p_0$ & $p_1$ & $p_2$ & $p_0$ & $p_1$ & $p_2$ & $p_0$ & $p_1$ & $p_2$ \\
    \hline\hline
    0.05 & 0\% & 70\% & 30\% & 5\% & 75\% & 20\% & 0\% & 65\% & 35\% \\
    \hline
    0.2 & 0\% & 50\% & 50\% & 5\% & 65\% & 30\% & 5\% & 60\% & 35\% \\
    \hline
    0.8 & 5\% & 45\% & 50\% & 5\% & 40\% & 55\% & 0\% & 30\% & 70\% \\
    \hline
    1.0 & 0\% & 40\% & 60\% & 0\% & 45\% & 55\% & 0\% & 25\% & 75\% \\
    \hline\hline
  \end{tabular}
  \caption{Convergence behavior of different initializations}
  \label{table:diff init}
\end{table}

From Table \ref{table:diff init}, it is clear that most of the samples converge to some approximate NE with exploitabilities below the target $10^{-3}$ tolerance, regardless of the initializations. In addition, different initializations lead to different solutions. Specifically, when initializations are close to some NE, it is more likely for MF-OMO NAdam to converge to that specific NE, which is consistent with our theoretical study. Finally, as the neighborhood sizes $\epsilon$ increase, the convergence behavior become expectedly more chaotic, \textit{i.e.}, less concentrated on the center NE. 

}

\bibliographystyle{plain}
\bibliography{mfopt_arxiv_v3}

\newpage
\appendix
\section*{Appendix}
\addcontentsline{toc}{section}{Appendix}

\section{Additional optimization algorithms and convergence to stationary points}\label{sgd_and_stationary_convergence}
\subsection{Stochastic projected gradient descent (SPGD)}  
When the state space, action space and the time horizon are large, a single gradient evaluation in the PGD update \eqref{pgd_mfvmo} can be costly. %very expensive. 
To address this issue, we consider a stochastic variant of PGD (SPGD), which is suitable to handle problems with large $S$, $A$ and $T$. To invoke SPGD, first recall the explicitly expansion of \eqref{mfvmo-full} as \eqref{expansion-mfvmo}. 
The objective in \eqref{expansion-mfvmo} is a sum of $n:=S+ST+SA+SA(T-1)+SA+SA(T+1)=S(T+1)(2A+1)$ terms, which we denote as $f_i^{\text{MF-OMO}}(\theta)$ ($i=1,\dots,n$) for brevity. 
SPGD replaces the exact gradient $\nabla_{\theta}f^{\text{MF-OMO}}(\theta_k)$ in PGD with a mini-batch estimator of the following form
\begin{equation}\label{hatgk}
\hat{g}_k=\frac{\color{black}n}{|\mathcal{B}_k|}\sum_{i\in \mathcal{B}_k}\nabla_{\theta}f_i^{\text{MF-OMO}}(\theta_k),
\end{equation}
where the mini-batch $\mathcal{B}_k$ is a subset of $\{1,\dots,n\}$ sampled uniformly at random (without replacement) and independently across $k$ (\ie, iterations). Such a sampling approach ensures that $\hat{g}_k$ is an unbiased estimator of $\nabla_{\theta}f^{\text{MF-OMO}}(\theta_k)$ (conditioned on $\theta_k$). 

SPGD then proceeds by updating $\theta$ with the following iterative process:
\begin{equation}\label{spgd_mfvmo}
\theta_{k+1}=\textbf{Proj}_{\Theta}(\theta_k-\eta_k\hat{g}_k).
\end{equation}
Again, we assume that $\theta_0\in\Theta$.

\begin{proposition}\label{hatgk_properties}
Under Assumption \ref{C2smooth}, there exist constants $C_1,C_2>0$ such that for any $k\geq 0$, $\|\hat{g}_k\|_2\leq C_1$ almost surely, $\|\nabla_{\theta}f^{\text{\rm MF-OMO}}(\theta_k)\|_2\leq C_1$, $\mathbb{E}_k\hat{g}_k=\nabla_{\theta}f^{\text{\rm MF-OMO}}(\theta_k)$ and  $\mathbb{E}_k\|\hat{g}_k\|_2^2\leq C_2$. Here $\mathbb{E}_k$ is the conditional expectation given the $k$-th iteration $\theta_k$.
\end{proposition}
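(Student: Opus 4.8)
The plan is to exploit two structural facts: the feasible set $\Theta$ is compact, and each summand $f_i^{\text{MF-OMO}}$ is continuously differentiable on an open neighborhood of $\Theta$. First I would verify compactness of $\Theta$ directly from its definition \eqref{Theta_feasible}: the $L$-block ranges over a product of $T+1$ probability simplices, the $y$-block over a closed Euclidean ball, and the $z$-block over the intersection of the nonnegative orthant with the half-space $\{\mathbf{1}^\top z\le SA(T^2+T+2)r_{\max}\}$, a bounded polytope. As a Cartesian product of compact sets, $\Theta$ is compact.

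Next I would argue smoothness of the summands. By the expansion \eqref{expansion-mfvmo}, each of the $n$ terms $f_i^{\text{MF-OMO}}$ is a polynomial in the entries of $y$ and $z$ whose coefficients are built from $P_t(s'|s,a,L_t)$, $r_t(s,a,L_t)$ and $\mu_0$. Under Assumption \ref{C2smooth}, $P_t$ and $r_t$ are $C^2$ in $L_t$ on an open set containing $\Delta(\mathcal{S}\times\mathcal{A})$; composing with polynomials in $(y,z)$ shows that each $f_i^{\text{MF-OMO}}$ is $C^2$, hence $C^1$, jointly in $\theta=(y,z,L)$ on an open neighborhood $U\supseteq\Theta$. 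Consequently each gradient $\nabla_\theta f_i^{\text{MF-OMO}}$ is continuous on the compact set $\Theta$, and therefore bounded: there is $G_i$ with $\|\nabla_\theta f_i^{\text{MF-OMO}}(\theta)\|_2\le G_i$ for all $\theta\in\Theta$. I then set $G:=\max_{1\le i\le n}G_i$.

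The four claims follow quickly. The full-gradient bound comes from the triangle inequality, $\|\nabla_\theta f^{\text{MF-OMO}}(\theta)\|_2\le\sum_{i=1}^n\|\nabla_\theta f_i^{\text{MF-OMO}}(\theta)\|_2\le nG$, valid for every $\theta_k\in\Theta$. For the almost-sure bound on the estimator \eqref{hatgk}, writing $b:=|\mathcal{B}_k|$ for the (fixed) batch size,
\[
\|\hat{g}_k\|_2=\frac{n}{b}\Big\|\sum_{i\in\mathcal{B}_k}\nabla_\theta f_i^{\text{MF-OMO}}(\theta_k)\Big\|_2\le\frac{n}{b}\sum_{i\in\mathcal{B}_k}\|\nabla_\theta f_i^{\text{MF-OMO}}(\theta_k)\|_2\le\frac{n}{b}\,b\,G=nG,
\]
so both bounds hold with $C_1:=nG$. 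Unbiasedness is the standard symmetry computation for uniform sampling without replacement: by exchangeability every index satisfies $\mathbb{P}(i\in\mathcal{B}_k)=b/n$, whence $\mathbb{E}_k\hat{g}_k=\frac{n}{b}\sum_{i=1}^n\mathbb{P}(i\in\mathcal{B}_k)\nabla_\theta f_i^{\text{MF-OMO}}(\theta_k)=\sum_{i=1}^n\nabla_\theta f_i^{\text{MF-OMO}}(\theta_k)=\nabla_\theta f^{\text{MF-OMO}}(\theta_k)$. Finally, since $\|\hat{g}_k\|_2\le C_1$ almost surely, the second-moment bound is immediate with $C_2:=C_1^2=n^2G^2$.

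The argument is essentially routine once compactness and $C^1$-smoothness are in place; the only point requiring care is that the gradients be continuous \emph{up to and including} the boundary of the simplex constraint on $L$. This is exactly why Assumption \ref{C2smooth} posits differentiability on an open set strictly containing $\Delta(\mathcal{S}\times\mathcal{A})$ rather than merely on the simplex itself, ensuring that $\nabla_\theta f_i^{\text{MF-OMO}}$ is defined and continuous on a neighborhood of $\Theta$ so that the extreme-value theorem applies on the compact set $\Theta$.
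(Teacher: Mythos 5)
Your proof is correct and follows exactly the route the paper indicates for this (omitted) proof: compactness of $\Theta$, continuity of the summand gradients guaranteed by Assumption \ref{C2smooth} holding on an open neighborhood of the simplex, the extreme-value theorem for the uniform bound $C_1=nG$, the standard inclusion-probability computation for unbiasedness under uniform sampling without replacement, and $C_2=C_1^2$ for the second moment. No gaps; the remark about why the assumption requires an \emph{open} set containing $\Delta(\mathcal{S}\times\mathcal{A})$ is exactly the right point of care.
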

That is, the estimator \eqref{hatgk} is unbiased, almost surely bounded, with a bounded second-order moment, and the exact gradient is also uniformly bounded. The proof is a direct application of the continuity of the rewards and dynamics and the compactness of $\Theta$, and  omitted here.

\subsection{Global convergence to stationary points without definability}
In this section, we show the global convergence of the iterates to stationary points when only the smoothness assumption holds. Let us first %We first formally 
recap the concept of stationary points for constrained optimization problems and their relation to optimality. 
\paragraph{Optimality conditions.}  %Note that whenever an MFNE exists for the original MFG (\eg, when the continuity assumptions in Proposition \ref{existence} holds, which are in turn implied by Assumption \ref{C2smooth}), the optimal value of \eqref{mfvmo-full} is obviously $0$. %$f^\star$. 
The KKT conditions state that a necessary condition for a point $\theta\in\Theta$ to be the optimal solution to \eqref{mfvmo-full} is  $-\nabla_{\theta}f^{\text{MF-OMO}}(\theta)\in\mathcal{N}_{\Theta}(\theta)$, where
\[
\mathcal{N}_{\Theta}(\theta)=\left\{\begin{array}{ll}
   \emptyset,  & \theta\notin\Theta, \\
    \{\nu|\nu^\top(\theta'-\theta)\leq 0,\,\forall\theta'\in\Theta\}, & \theta\in\Theta
\end{array}\right.
\] 
is the normal cone of $\Theta$ at $\theta$. We say that $\theta$ is a stationary point of \eqref{mfvmo-full} if $-\nabla_{\theta}f^{\text{MF-OMO}}(\theta)\in\mathcal{N}_{\Theta}(\theta)$. %is satisfied. 

The typical goal of nonconvex optimization is to find a sequence of points that converge to a stationary point. Since the normal cone is set-valued, it is hard to directly evaluate the closeness to stationary points with the above definition. A common surrogate metric is the norm of the projected gradient mapping $G_{\eta}:\Theta\rightarrow\Theta$,  defined as  \[
G_\eta(\theta):=\dfrac{1}{\eta}\left(\theta-\textbf{Proj}_{\Theta}(\theta-\eta\nabla_{\theta}f^{\text{MF-OMO}}(\theta))\right).
\]
Here $G_{\eta}(\theta)=0$ if and only if $\theta$ is a stationary point of \eqref{mfvmo-full}. Moreover, if $\|G_{\eta}(\theta)\|_2\leq \epsilon$, then according to \cite[Lemma 3]{ghadimi2016accelerated}, %we have
\[
-\nabla f^{\text{MF-OMO}}(\theta)\in \mathcal{N}_{\Theta}(\theta)+\epsilon(\eta M+1)\mathbb{B}_2,
\]
where $\mathbb{B}_2$ is the unit $\ell_2$ ball. %To put it another way, an $\epsilon$ upper bound on the norm of the projected gradient $G_{\eta}(\theta)$ implies that 

When $\Theta$ is the full Euclidean space, the projected gradient mapping is simply the gradient of $f^{\text{MF-OMO}}$. We refer interested readers to \cite[Chapter 3]{beck2017first} for a more detailed discussion on optimality conditions.

\paragraph{Convergence to stationary points.} 

Under Assumption \ref{C2smooth}, the following convergence result for PGD is standard given Proposition \ref{strong_smooth_fact} and %the fact that 
the existence of optimal solution(s) for \eqref{mfvmo-full}. %due to the continuity of the objective and the compactness of the constraint set. 
\begin{proposition}{\cite[Theorem 10.15]{beck2017first}}\label{stationary_convergence_pgd}
Under Assumption \ref{C2smooth}, let $\{\theta_k\}_{k\geq 0}$ be the sequence generated by PGD \eqref{pgd_mfvmo} with $\eta_k=\eta\in(0,2/M)$. Then %we have the following conclusions.
\begin{itemize}
    \item the objective value sequence $\{f^{\text{\rm MF-OMO}}(\theta_k)\}_{k\geq 0}$ is decreasing, and the decreasing is strict until a stationary point is found (\ie, $\nabla_{\theta}f^{\text{\rm MF-OMO}}(\theta_k)=0$) and the iteration is terminated;
    \item $\sum_{k=0}^K\|G_{\eta}(\theta_k)\|_2^2\leq \frac{f^{\text{\rm MF-OMO}}(\theta_0)}{\eta-M\eta^2/2}$, 
and in particular, 
\[
\min_{k=0,\dots,K}\|G_\eta(\theta_k)\|_2\leq \sqrt{\dfrac{f^{\text{\rm MF-OMO}}(\theta_0)}{(\eta-M\eta^2/2)(K+1)}};
\]
\item all limit points of $\{\theta_k\}_{k\geq 0}$ are stationary points of \eqref{mfvmo-full}.
\end{itemize} 
%XXX. Existence of optimal solution. Smoothness beyond the simplex?
\end{proposition}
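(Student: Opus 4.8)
The plan is to derive all three conclusions from a single \emph{sufficient decrease} inequality for the projected gradient step, following the classical analysis of projected gradient descent for smooth nonconvex objectives over a convex set. The two inputs I would use are Proposition \ref{strong_smooth_fact} ($M$-strong smoothness of $f^{\text{MF-OMO}}$ on $\Theta$) and the elementary observation that $f^{\text{MF-OMO}} \geq 0$ on $\Theta$: from \eqref{expansion-mfvmo} the objective is a sum of squared terms plus the complementarity term $z^\top L = \sum_{s,a,t} z_t(s,a) L_t(s,a)$, which is nonnegative since $z \geq 0$ and $L \geq 0$ hold throughout $\Theta$ (cf. \eqref{Theta_feasible}). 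I also record that $\Theta$ is closed, convex, and compact, so the projection is well-defined and nonexpansive and limit points exist.

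First I would establish the sufficient decrease bound
\begin{equation*}
f^{\text{MF-OMO}}(\theta_{k+1}) \leq f^{\text{MF-OMO}}(\theta_k) - \Big(\eta - \tfrac{M\eta^2}{2}\Big)\|G_\eta(\theta_k)\|_2^2.
\end{equation*}
This follows by combining the descent lemma implied by $M$-smoothness (writing $f$ for $f^{\text{MF-OMO}}$), namely $f(\theta_{k+1}) \leq f(\theta_k) + \langle \nabla f(\theta_k), \theta_{k+1}-\theta_k\rangle + \tfrac{M}{2}\|\theta_{k+1}-\theta_k\|_2^2$, with the variational characterization of the projection: since $\theta_{k+1} = \textbf{Proj}_\Theta(\theta_k - \eta\nabla f(\theta_k))$ and $\Theta$ is closed and convex, one has $\langle \theta_k - \eta\nabla f(\theta_k) - \theta_{k+1},\, \theta_k - \theta_{k+1}\rangle \leq 0$. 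Substituting $\theta_{k+1}-\theta_k = -\eta\, G_\eta(\theta_k)$ and rearranging yields the claimed inequality. Because $\eta \in (0, 2/M)$ makes the coefficient $\eta - M\eta^2/2$ strictly positive, the first bullet follows: $\{f^{\text{MF-OMO}}(\theta_k)\}$ is nonincreasing, and strictly decreasing unless $G_\eta(\theta_k)=0$, which by definition of $G_\eta$ is exactly the stationarity (fixed-point) condition.

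Next I would telescope the sufficient decrease inequality over $k=0,\dots,K$ and use $f^{\text{MF-OMO}}(\theta_{K+1}) \geq 0$ to obtain
\begin{equation*}
\Big(\eta - \tfrac{M\eta^2}{2}\Big)\sum_{k=0}^K \|G_\eta(\theta_k)\|_2^2 \leq f^{\text{MF-OMO}}(\theta_0) - f^{\text{MF-OMO}}(\theta_{K+1}) \leq f^{\text{MF-OMO}}(\theta_0),
\end{equation*}
which gives the summation bound of the second bullet; the $\min$-bound then follows since the minimum over $k$ is at most the running average. Finally, for the third bullet, summability $\sum_{k\geq 0}\|G_\eta(\theta_k)\|_2^2 < \infty$ forces $\|G_\eta(\theta_k)\|_2 \to 0$; for any convergent subsequence $\theta_{k_j} \to \bar\theta$, continuity of $G_\eta$—inherited from continuity of $\nabla f^{\text{MF-OMO}}$ and nonexpansiveness of $\textbf{Proj}_\Theta$—gives $G_\eta(\bar\theta)=0$, i.e. $\bar\theta$ is stationary.

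The argument is entirely standard, so I do not anticipate a genuine obstacle; the one place requiring care is the derivation of the sufficient decrease inequality, where the projection's variational inequality must be applied with the correct test point ($\theta_k$) to convert the smoothness bound into a decrease expressed purely through $\|G_\eta(\theta_k)\|_2^2$. The only problem-specific verifications are the nonnegativity and smoothness of $f^{\text{MF-OMO}}$ and the closedness, convexity, and compactness of $\Theta$, all of which are immediate here.
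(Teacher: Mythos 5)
Your proof is correct and is essentially the standard sufficient-decrease argument that the paper invokes by citing \cite[Theorem 10.15]{beck2017first} rather than proving; the paper supplies no proof of its own beyond that citation, and your reconstruction (descent lemma plus the projection's variational inequality at the test point $\theta_k$, telescoping with $f^{\text{MF-OMO}}\geq 0$, then continuity of $G_\eta$) is exactly the textbook route. The only cosmetic remark is that the proposition's parenthetical ``$\nabla_\theta f^{\text{MF-OMO}}(\theta_k)=0$'' should really read $G_\eta(\theta_k)=0$ for the constrained setting, and your proof correctly works with the latter.
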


Similarly, the following proposition shows that the iterates of SPGD converge to stationary points with high probability. %To make better use of 
The uniform boundedness in Proposition \ref{hatgk_properties} allows us to adapt the proof in \cite[Theorem 9]{zhang2021sample} for policy gradient methods of MDPs to our setting. %, while additionally taking care of the projections.
The proof is omitted due to similarity. 
%, which is not considered in \cite{zhang2020sample}. The proof is omitted 
%Note that here we need to additionally take care of the projections, which is not considered in \cite{zhang2020sample}. The proof can be found in Appendix \ref{sgd_convergence_appendix}. 

%To make better use of the uniform boundedness in Fact \ref{hatgk_properties}, we adapt the proof in \cite[Theorem 9]{zhang2020sample} (for policy gradient methods of MDPs) to our setting. Note that here we need to additionally take care of the projections, which is not considered in \cite{zhang2020sample}. The proof can be found in Appendix \ref{sgd_convergence_appendix}. 

%and can be found in the appendix. XXX. Put to appendix. %but we provide a self-contained 
\begin{proposition}\label{spgd_stationary_convergence}
Under Assumptions \ref{C2smooth} and \ref{definability}, let $\{\theta_k\}_{k\geq 0}$ be the sequence generated by SPGD \eqref{spgd_mfvmo} with $\eta_k=\frac{1}{\sqrt{k+3}\log_2(k+3)}$. Then for any $K\geq 0$, with probability at least $1-\delta$, we have 
\[
\min_{k=0,\dots,K}\|G_{\eta}(\theta_k)\|_2^2\leq \dfrac{4\log_2(K+3)}{\sqrt{K+1}}\left(C_2M+f^{\text{\rm MF-OMO}}(\theta_0)+C_1^2\sqrt{(16+M)\log\left(\frac{2}{\delta}\right)}\right).
\]
\end{proposition}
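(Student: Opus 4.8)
The plan is to follow the standard smooth--nonconvex analysis of projected stochastic gradient, upgraded to a high-probability statement by a martingale concentration inequality, with every boundedness constant furnished by Proposition~\ref{hatgk_properties}. Write $\tilde{G}_k:=\eta_k^{-1}\big(\theta_k-\textbf{Proj}_{\Theta}(\theta_k-\eta_k\hat{g}_k)\big)$ for the realized (stochastic) gradient mapping, so that the update \eqref{spgd_mfvmo} reads $\theta_{k+1}=\theta_k-\eta_k\tilde{G}_k$, and let $G_k:=G_{\eta_k}(\theta_k)$ denote its deterministic analogue built from the exact gradient. The starting point is the descent inequality from the $M$-strong smoothness of Proposition~\ref{strong_smooth_fact},
\begin{equation*}
f^{\text{MF-OMO}}(\theta_{k+1})\le f^{\text{MF-OMO}}(\theta_k)-\eta_k\langle\nabla_\theta f^{\text{MF-OMO}}(\theta_k),\tilde{G}_k\rangle+\tfrac{M\eta_k^2}{2}\|\tilde{G}_k\|_2^2 .
\end{equation*}
Three elementary projection facts are then used, with $\xi_k:=\hat{g}_k-\nabla_\theta f^{\text{MF-OMO}}(\theta_k)$: the variational inequality at $\theta_k$ gives $\langle\hat{g}_k,\tilde{G}_k\rangle\ge\|\tilde{G}_k\|_2^2$; non-expansiveness of $\textbf{Proj}_\Theta$ gives $\|\tilde{G}_k\|_2\le\|\hat{g}_k\|_2$ and $\|\tilde{G}_k-G_k\|_2\le\|\xi_k\|_2$; and firm non-expansiveness gives $\langle\xi_k,\tilde{G}_k-G_k\rangle\ge\|\tilde{G}_k-G_k\|_2^2\ge 0$.

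Inserting $\langle\nabla_\theta f^{\text{MF-OMO}}(\theta_k),\tilde{G}_k\rangle\ge\|\tilde{G}_k\|_2^2-\langle\xi_k,\tilde{G}_k\rangle$, bounding the quadratic term by $\|\tilde G_k\|_2^2\le\|\hat g_k\|_2^2$, and splitting $\langle\xi_k,\tilde{G}_k\rangle=\langle\xi_k,G_k\rangle+\langle\xi_k,\tilde{G}_k-G_k\rangle$ gives the assembled per-iteration estimate
\begin{equation*}
f^{\text{MF-OMO}}(\theta_{k+1})\le f^{\text{MF-OMO}}(\theta_k)-\eta_k\|\tilde{G}_k\|_2^2+\eta_k\langle\xi_k,G_k\rangle+\eta_k\langle\xi_k,\tilde{G}_k-G_k\rangle+\tfrac{M\eta_k^2}{2}\|\hat{g}_k\|_2^2 .
\end{equation*}
Here the function values telescope (using $f^{\text{MF-OMO}}\ge 0$) into $f^{\text{MF-OMO}}(\theta_0)$; the term $\eta_k\langle\xi_k,G_k\rangle$ is a martingale difference because $G_k$ is $\theta_k$-measurable and $\mathbb{E}_k\xi_k=0$; the quadratic term, summed, contributes $\tfrac{M}{2}\sum_k\eta_k^2\|\hat g_k\|_2^2$ with conditional expectation controlled by $\mathbb{E}_k\|\hat{g}_k\|_2^2\le C_2$, which is the source of the $C_2M$ constant. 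The decisive feature of the schedule $\eta_k=1/(\sqrt{k+3}\log_2(k+3))$ is that $\sum_k\eta_k^2$ converges (comparable to $\int dx/(x\log^2 x)$) while $\sum_{k=0}^K\eta_k$ diverges at rate $\sqrt{K+1}/\log_2(K+3)$; this is what makes the $\eta_k^2$-weighted contributions vanish after normalization rather than leave a constant noise floor.

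For the martingale sum $\sum_{k=0}^K\eta_k\langle\xi_k,G_k\rangle$, each summand has conditional mean zero and almost-sure magnitude at most $\eta_k\|\xi_k\|_2\|G_k\|_2\le 2C_1^2\eta_k$, using the almost-sure bound $\|\xi_k\|_2\le 2C_1$ from Proposition~\ref{hatgk_properties} and $\|G_k\|_2\le\|\nabla_\theta f^{\text{MF-OMO}}(\theta_k)\|_2\le C_1$. Azuma--Hoeffding then bounds this sum, with probability at least $1-\delta$, by a quantity of order $C_1^2\sqrt{\big(\sum_k\eta_k^2\big)\log(2/\delta)}$, which produces the $\sqrt{\log(2/\delta)}$ factor and the combined constant $\sqrt{16+M}$ multiplying $C_1^2$. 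Dividing the summed inequality through by $\sum_{k=0}^K\eta_k$ and lower-bounding $\min_{k\le K}\|G_k\|_2^2$ by the $\eta_k$-weighted average $\big(\sum_k\eta_k\|G_k\|_2^2\big)\big/\big(\sum_k\eta_k\big)$ then yields the stated $O\big(\log_2(K+3)/\sqrt{K+1}\big)$ bound with the three constants $C_2M$, $f^{\text{MF-OMO}}(\theta_0)$, and $C_1^2\sqrt{(16+M)\log(2/\delta)}$.

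I expect the main obstacle to be the control of the residual noise term in the presence of the projection. Unlike unconstrained SGD---where the cross term $\langle\nabla_\theta f^{\text{MF-OMO}}(\theta_k),\xi_k\rangle$ is automatically a clean martingale difference---the projected direction $\tilde{G}_k$ depends nonlinearly on $\hat{g}_k$, so one must peel off the predictable mapping $G_k$ before any concentration argument applies, leaving the residual $\eta_k\langle\xi_k,\tilde{G}_k-G_k\rangle\in[0,\eta_k\|\xi_k\|_2^2]$ together with the conversion of the signal $-\eta_k\|\tilde{G}_k\|_2^2$ into the target $-\tfrac{\eta_k}{2}\|G_k\|_2^2$. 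The delicate point is to arrange, via the $\ell^2$-summability of $\eta_k$ and the uniform bounds of Proposition~\ref{hatgk_properties}, that these residuals enter the final ratio through $\sum_k\eta_k^2$ (hence vanish) rather than through $\sum_k\eta_k$ (which would produce a non-vanishing variance floor). This is exactly the template of \cite[Theorem 9]{zhang2021sample} for projected policy-gradient methods; the only adaptation is that the constants $C_1,C_2$ and the smoothness modulus $M$ are here supplied by Propositions~\ref{hatgk_properties} and~\ref{strong_smooth_fact} in place of the MDP structure.
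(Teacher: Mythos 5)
First, a point of comparison: the paper does not actually prove Proposition~\ref{spgd_stationary_convergence} --- it states only that the uniform boundedness in Proposition~\ref{hatgk_properties} ``allows us to adapt the proof in \cite[Theorem 9]{zhang2021sample}'' and omits the argument. Your proposal follows exactly the template that citation points to (smoothness descent inequality, projection inequalities, a martingale decomposition closed by Azuma--Hoeffding, and the $\ell^1$-divergent/$\ell^2$-summable step-size schedule), so in terms of approach you are aligned with what the paper intends, and your accounting of the rate $\log_2(K+3)/\sqrt{K+1}$ and of the three constants $C_2M$, $f^{\text{MF-OMO}}(\theta_0)$, $C_1^2\sqrt{(16+M)\log(2/\delta)}$ is consistent with the stated bound.

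However, the one step you flag as ``delicate'' is not actually closed, and it is the crux of the projected (as opposed to unconstrained) analysis: the residual $\eta_k\langle\xi_k,\tilde{G}_k-G_k\rangle$. By firm non-expansiveness this term lies in $[0,\eta_k\|\xi_k\|_2^2]$, and that is the best generic bound available --- there is no extra factor of $\eta_k$ to be gained, because the gradient mapping already carries the $1/\eta_k$ normalization. Its conditional expectation is therefore of order $\eta_k\,\mathbb{E}_k\|\xi_k\|_2^2$, and Proposition~\ref{hatgk_properties} supplies only $\mathbb{E}_k\|\hat{g}_k\|_2^2\leq C_2$, not a vanishing variance; the mini-batch \eqref{hatgk} has fixed size, so $\mathbb{E}_k\|\xi_k\|_2^2$ does not decay with $k$. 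Summing and dividing by $\sum_{k\leq K}\eta_k$ then leaves an $O(1)$ noise floor: contrary to the assertion in your final paragraph, this residual enters the ratio through $\sum_k\eta_k$, not through $\sum_k\eta_k^2$. This is precisely the classical obstruction to vanishing stationarity bounds for projected stochastic gradient with fixed batch sizes (the reason Ghadimi--Lan-type analyses of the composite/constrained setting require growing mini-batches). To complete the proof you must exhibit the specific mechanism that disposes of this term --- for instance, first controlling $\sum_k\eta_k\|\tilde{G}_k\|_2^2$ and then transferring to $\|G_{\eta}(\theta_k)\|_2^2$ via a separate concentration step, or exploiting the finite-sum structure of \eqref{expansion-mfvmo} and the sampling without replacement --- or restructure the decomposition so that only martingale differences and genuinely $O(\eta_k^2)$ remainders survive. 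As written, your argument establishes the unconstrained analogue but not the stated bound on the projected gradient mapping.
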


\subsection{Reparametrization, acceleration and variance reduction}\label{popular_tricks} 
%There are some popular tricks to potentially solve 
\eqref{mfvmo-full} may be solved more efficiently. 
First, %the constraint set $\Theta$ is simple enough that 
one can also reparametrize the variables to completely get rid of the simple constraints in $\Theta$. In particular, one can reparametrize $L$ and $z$ by 
\[
L_{s,a,t}=\frac{\exp(u_{s,a,t})}{\color{black}\sum_{s'\in\mathcal{S},a'\in\mathcal{A}}\exp(u_{s',a',t})},\quad z_{s,a,t}=SA(T^2+T+2)r_{\max}\frac{\exp(v_{s,a,t})}{\color{black}\sum_{s'\in\mathcal{S},a'\in\mathcal{A}}\exp(v_{s',a',t})+w_0}
\]
for some $u=[u_{s,a,t}]_{s\in\mathcal{S},a\in\mathcal{A},t\in\mathcal{T}}\in\mathbb{R}^{SA(T+1)}$ and {\color{black}$v=[v_{s,a,t}]_{s\in\mathcal{S},a\in\mathcal{A},t\in\mathcal{T}}\in\mathbb{R}^{SA(T+1)}$, and $w_0\in\mathbb{R}$}. Similarly, $y$ can be reparametrized using trigonometric functions such as {\color{black}$y_{s,t}=\frac{S(T+1)(T+2)r_{\max}}{2\sqrt{S(T+1)}}\sin(w_{s,t})$} for some $w\in\mathbb{R}^{S(T+1)}$.%\footnote{Note that one can also use $\cos$ instead of $\sin$ here.} 
With the aforementioned reparametrization, \eqref{mfvmo-full} becomes a smooth unconstrained optimization problem, which can be solved by additional optimization algorithms and solvers not involving projections. % (and in particular, those not involving projections).  
As mentioned in Remark \ref{remark_mfvmo_form}, %\S\ref{mfvmo-section}, 
one can also change the norms of $y$ and $z$ and their bands %(and accordingly also the bounds on them) 
and apply other reparametrizations. %can be applied.

Lastly, standard techniques such as momentum methods \cite{polyak1987introduction}, Nesterov acceleration \cite{nesterov2018lectures}, quasi-Newton methods \cite{wright1999numerical}, Anderson acceleration \cite{anderson1965iterative}, and Newton methods \cite{luenberger1984linear} can readily be applied to accelerate the convergence of PGD and SPGD; variance reduction approaches  such as SAG \cite{le2013stochastic}, SVRG \cite{johnson2013accelerating}, SAGA \cite{defazio2014saga}, SEGA \cite{hanzely2018sega} can also be adopted to further accelerate and stabilize the convergence of SPGD. In particular, safeguarded Anderson acceleration methods \cite{henderson2019damped,zhang2020globally,sopasakis2019superscs} might be adopted to maintain the desired monotonicity property of vanilla PGD, which is central for the local convergence to global Nash equilibrium solutions (\cf Theorem \ref{local_conv_global}).
%and Newton methods \cite{} 

\section{Definability and local convergence to global Nash equilibrium solutions}

\subsection{Definability}\label{definability_appendix}
In this section, we restate the formal definitions of definable sets and functions as in \cite[Section 4.3]{attouch2010proximal}, and summarize and derive necessary properties %that are needed 
to prove Theorem \ref{local_conv_global}. See \cite{van1996geometric,wilkie1996model,van1998tame,marker2000tame,coste2000introduction-ominimal,kaiser2020differentiability} for more complete expositions %addressment 
of the topic. 

We first need the following concept of o-minimal structure over $\mathbb{R}$.
\begin{definition}\label{o-minimal-structure}
Suppose that $\mathcal{O}:=\{\mathcal{O}_n\}_{n=0}^{\infty}$ is a sequence of subsets collections $\mathcal{O}_n\subseteq 2^{\mathbb{R}^n}$. We say that $\mathcal{O}$ is an o-minimal structure over $\mathbb{R}$ if it satisfies the following conditions:
\begin{itemize}
    \item $\forall n\geq 0$, $\emptyset\in\mathcal{O}_n$. And $\forall A,B\in\mathcal{O}_n$,  $A\cap B, A\cup B, \mathbb{R}^n\backslash A \in\mathcal{O}_n$.
    \item $\forall A\in\mathcal{O}_n$, $A\times\mathbb{R}\in\mathcal{O}_{n+1}$ and $\mathbb{R}\times A\in\mathcal{O}_{n+1}$.
    \item $\forall A\in\mathcal{O}_{n+1}$, its canonical projection $\{(x_1,\dots,x_n)\in\mathbb{R}^n|(x_1,\dots,x_n,x_{n+1})\in A\}\in\mathcal{O}_n$. % belongs to $\mathcal{O}_n$.
    \item $\forall i\neq j$ and $i,j\in\{1,\dots,n\}$, the diagonals $\{(x_1,\dots,x_n)|x_i=x_j\}\in\mathcal{O}_n$. 
    \item The set $\{(x_1,x_2)\in\mathbb{R}^2|x_1<x_2\}\in\mathcal{O}_2$.
    \item The elements of $\mathcal{O}_1$ are exactly finite unions of intervals (including both open and closed, and in particular single points). 
\end{itemize}
\end{definition}

We then have the following definition of definability. 
\begin{definition}\label{def_definable}
Let $\mathcal{O}=\{\mathcal{O}_n\}_{n=0}^{\infty}$ be an o-minimal structure over $\mathbb{R}$. We say that a set $A\in\mathbb{R}^n$ is definable if  $A\in\mathcal{O}_n$. A %real extended value
function/mapping $f:A\subseteq\mathbb{R}^n\rightarrow\mathbb{R}^m$ %\mathbb{R}\cup\{+\infty\}$ 
is said to be definable if its graph $\{(x,y)\in\mathbb{R}^n\times\mathbb{R}^m|y=f(x)\}\in\mathcal{O}_{n+m}$.\footnote{Note that a real extended value function $f:\mathbb{R}^n\rightarrow\mathbb{R}\cup\{+\infty\}$ can be equivalently seen as a standard function $f:\textbf{dom}(f)\rightarrow\mathbb{R}$. Here $\textbf{dom}(f)$ denotes the domain of $f$.}  A set-valued mapping $f:A\subseteq \mathbb{R}^n\rightrightarrows \mathbb{R}^m$ is said to be definable if its graph $\{(x,y)\in A\times\mathbb{R}^m|y\in f(x)\}\in\mathcal{O}_{n+m}$.
\end{definition}
 Below we provide three examples of most widely used o-minimal structures, namely the real semialgebraic structure, globally subanalytic structure and the log-exp structure used in our main text.
\begin{example}[Real semialgebraic structure]
In this structure, each $\mathcal{O}_n$ consists of real semialgebraic sets in $\mathbb{R}^n$. A set in $\mathbb{R}^n$ is said to be real semialgebraic (or semialgebraic for short) if it can be written as a finite union of sets of the form $\{x\in\mathbb{R}^n|p_i(x)=0,\,q_i(x)<0,\,i=1,\dots,k\}$, where $p_i,q_i$ are real polynomial functions and $k\geq 1$. A function/mapping $f:A\subseteq\mathbb{R}^n\rightarrow\mathbb{R}^m$ is called (real) semialgebraic if its graph is semialgebraic. 
%More precisely, here $\mathcal{O}_n$ consists of sets that can be written as 
\end{example}
The following result is %will turn out to be 
useful for proving %in the proof of 
Theorem \ref{local_conv_global} (see \cite{coste2000introduction} for more properties of real semialgberaic sets). %It can be proved by simply noticing that $\{x\in\mathbb{R}^n|p(x)=0\}$
\begin{lemma}\label{closure_semialgebraic}%[\mbox{\cite[Corollary 2.5]{coste2000introduction}}]\label{closure_semialgebraic}
Any set of the form $\{x\in\mathbb{R}^n|p_i(x)=0,\,q_j(x)\leq 0,\,i=1,\dots,k,\,j=1,\dots,l\}$, where $p_i,q_j$ are real polynomial functions and $k,l\geq 1$ is real semialgebraic. 
%Any set of the form $\{x\in\mathbb{R}^n|p_i(x)=0,q_j(x)=$
%If $A$ is real semialgebraic in $\mathbb{R}^n$, then its closure in $\mathbb{R}^n$ is also real semialgebraic. In particular, any set of the form $\{x\in\mathbb{R}^n|p_i(x)=0,\,q_j(x)\leq 0,\,i=1,\dots,k,\,j=1,\dots,l\}$, where $p_i,q_j$ are real polynomial functions and $k,l\geq 1$ is real semialgebraic. 
\end{lemma}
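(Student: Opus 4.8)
The plan is to exhibit the set $S$ directly as a finite union of the basic building blocks appearing in the definition of a semialgebraic set, namely sets cut out by finitely many polynomial equations and finitely many strict polynomial inequalities. The only feature of $S$ that is not already of this form is the non-strict inequalities $q_j(x)\le 0$, and the whole argument hinges on the elementary observation that $\{q_j\le 0\}=\{q_j<0\}\cup\{q_j=0\}$, which splits each such constraint into a strict-inequality piece and an equality piece.

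First I would write
\[
S=\bigcap_{i=1}^{k}\{x\mid p_i(x)=0\}\cap\bigcap_{j=1}^{l}\bigl(\{x\mid q_j(x)<0\}\cup\{x\mid q_j(x)=0\}\bigr),
\]
and then distribute the intersection over the unions. This yields
\[
S=\bigcup_{T\subseteq\{1,\dots,l\}}\bigl\{x\in\mathbb{R}^n\mid p_i(x)=0\ (\forall i),\ q_j(x)=0\ (\forall j\in T),\ q_j(x)<0\ (\forall j\notin T)\bigr\}.
\]
For each subset $T\subseteq\{1,\dots,l\}$, the corresponding set is cut out by the polynomial equations $p_1,\dots,p_k$ together with $\{q_j:j\in T\}$, and by the strict polynomial inequalities $\{q_j:j\notin T\}$; this is precisely a basic semialgebraic set. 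Since there are only $2^{l}$ subsets $T$, the set $S$ is a finite union of basic semialgebraic sets, hence semialgebraic by definition.

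I expect no genuine obstacle here: the content is bookkeeping built on the single decomposition of $\le$ into $<$ and $=$ and on the distributivity of intersection over union. The only points requiring a little care are (i) remembering that each term in the union must be written with at least one equation and one strict inequality to literally match the defining form, which one arranges by padding empty lists with the vacuously true constraints $0=0$ and $-1<0$; and (ii) handling the boundary cases $T=\{1,\dots,l\}$ (no strict inequalities) and $T=\emptyset$ (no extra equations) via the same padding. With these conventions every term is a basic set and the proof is complete.
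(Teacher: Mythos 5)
Your proof is correct and rests on the same key decomposition as the paper's: splitting each constraint $q_j\le 0$ into $q_j<0$ or $q_j=0$ and padding with the vacuous constraints $0=0$ and $-1<0$. The only difference is in the final step --- the paper cites closure of semialgebraic sets under finite intersection (an axiom of the o-minimal structure) to conclude, whereas you carry that intersection out explicitly by distributing over the unions to exhibit the set as a union of $2^{l}$ basic semialgebraic sets directly from the definition; both routes are valid.
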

\begin{proof}
First, notice that $\{x\in\mathbb{R}^n|q(x)<0\}=\{x\in\mathbb{R}^n|0=0, q(x)<0\}$ and $\{x\in\mathbb{R}^n|p(x)=0\}=\{x\in\mathbb{R}^n|p(x)=0,-1<0\}$ are both real semialgebraic by definition. Hence $\{x\in\mathbb{R}^n|p(x)\leq 0\}=\{x\in\mathbb{R}^n|p(x)<0\}\cup \{x\in\mathbb{R}^n|p(x)=0\}$ is real semialgebraic.  
Finally, by the intersection closure property of o-minimal structures, %we have that 
\[
\begin{split}
&\{x\in\mathbb{R}^n|p_i(x)=0,q_j(x)\leq 0,i=1,\dots,k,j=1,\dots,l\}\\
&=\left(\bigcap_{i=1}^k\{x\in\mathbb{R}^n|p_i(x)=0\}\right)\bigcap\left(\bigcap_{j=1}^l\{x\in\mathbb{R}^n|q_j(x)\leq 0\}\right)
\end{split}
\]
 is also real semialgebraic. %This completes the proof.
\end{proof}

\begin{example}[Globally subanalytic structure] This structure is the smallest o-minimal structure such that each $\mathcal{O}_{n+1}$ ($n\geq 1$) includes but is not limited to all sets of the form $\{(x,t)\in [-1,1]^n\times\mathbb{R}| f(x)=t\}$, where $f:[-1,1]^n\rightarrow\mathbb{R}$ is an analytic function that can be extended analytically to a neighborhood containing $[-1,1]^n$. In particular, all semialgebraic sets and all sets of the form $\{(x,t)\in [a,b]^n\times\mathbb{R}| f(x)=t\}$ with $f:\mathbb{R}^n\rightarrow\mathbb{R}$ being analytic and $a\leq b\in\mathbb{R}$ belong to this structure. %A set in $\mathbb{R}^n$ is called globally subanalytic if it belongs to $\mathcal{O}_n$.
%, and a function/mapping $f:A\subseteq\mathbb{R}^n\rightarrow\mathbb{R}^m$ is called globally subanalytic if its graph belongs to $\mathcal{O}_{n+m}$.
%In particular, any analytic function restricted to  
\end{example}

\begin{example}[Log-exp structure]\label{log-exp-structure}
This structure is the smallest o-minimal structure containing the globally analytic structure and the graph of the exponential function $\exp:\mathbb{R}\rightarrow\mathbb{R}$.
\end{example}
As in the main text, we say that a function (or a set) is definable if it is definable on the log-exp structure. The following proposition formally summarizes the major facts about definable functions and sets. See \cite{attouch2010proximal} and \cite{van1996geometric} for more details.
%These properties can either be found in \cite{attouch2010proximal} and \cite{van1996geometric} or directly proved by definitions, and hence we omit the proofs. 
\begin{proposition}\label{facts_of_definable}
The following facts hold for definable functions/mappings and sets.
\begin{enumerate}
    \item Let the real extended value functions $f,g:\mathbb{R}^n\rightarrow\mathbb{R}\cup\{+\infty\}$ be definable, and let $G\in\mathbb{R}^{n\times m}$ be a constant matrix and $h\in\mathbb{R}^n$ be a constant vector. Then $f(x)+g(x)$, $f(x)-g(x)$, $f(x)g(x)$, $f(Gz+h)$, $\max\{f(x),g(x)\}$, $\min\{f(x),g(x)\}$ and $f^{-1}(y)$ are definable. The same results hold for vector-valued mappings $f,g:A\subseteq\mathbb{R}^n\rightarrow\mathbb{R}^m$.
    \item Let the functions/mappings $f:A\subseteq\mathbb{R}^n\rightarrow\mathbb{R}^m$ and $g:B\subseteq\mathbb{R}^m\rightarrow\mathbb{R}^l$, and let $f(A)\subseteq B$. Then the composition $h(x)=g(f(x)):A\rightarrow\mathbb{R}^l$ is also definable.
    \item Let $f:A\subseteq\mathbb{R}^n\rightarrow\mathbb{R}$ be definable and let $B\subseteq A$ be a compact set on which $f(x)\neq 0$ for all $x\in B$. Then $g(x)=1/f(x):B\rightarrow \mathbb{R}$ is also definable. 
    \item Let $f(x,y):\mathbb{R}^n\times\mathbb{R}^m\rightarrow\mathbb{R}$ and $C\subseteq\mathbb{R}^m$ be definable. Then $g(x)=\sup_{y\in C}f(x,y):\mathbb{R}^n\rightarrow\mathbb{R}$ and $h(x)=\inf_{y\in C}f(x,y):\mathbb{R}^n\rightarrow\mathbb{R}$ are both definable.  
    \item All semialgebraic functions,  all analytic functions restricted to definable compact sets, as well as the exponential function $\exp$ and logarithm function $\log$ are definable. 
    \item Let the set $\Theta$ be definable. Then the indicator function $\mathbf{1}_{\Theta}(\theta)$ (that takes the value $1$ when $\theta\in\Theta$ and $0$ otherwise) and the characteristic function\footnote{Here $\mathcal{I}_{\mathcal{X}}(x)$ denotes the (real extended valued) characteristic function (sometimes also referred to as the indicator function in some literature) that takes value $0$ when $x\in\mathcal{X}$ and $+\infty$ otherwise.} $\mathcal{I}_{\Theta}(\theta)$ are also definable. %:\mathbb{R}^n\rigtharrow\mathbb{R}$
    \item Let the sets $A,B\subseteq\mathbb{R}^n$ be definable. Then the Cartesian product $A\times B$ is definable. 
\end{enumerate}
\end{proposition}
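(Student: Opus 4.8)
\subsection*{Proof proposal for Proposition \ref{facts_of_definable}}

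The plan is to derive every item directly from the o-minimal axioms in Definition \ref{o-minimal-structure} together with the graph-based notion of definability in Definition \ref{def_definable}, exploiting that these axioms make the definable sets closed under the Boolean operations (union, intersection, complement), under adjoining dummy coordinates ($A\mapsto A\times\mathbb{R}$), under coordinate permutations (a standard consequence of the diagonal and projection axioms), and under existential quantification, which is exactly canonical projection. First I would record two building blocks. The Cartesian-product claim (item 7) is immediate: from $A\in\mathcal{O}_m$ and $B\in\mathcal{O}_n$ the product-with-$\mathbb{R}$ axiom gives $A\times\mathbb{R}^n\in\mathcal{O}_{m+n}$ and $\mathbb{R}^m\times B\in\mathcal{O}_{m+n}$, and $A\times B=(A\times\mathbb{R}^n)\cap(\mathbb{R}^m\times B)$ by intersection closure. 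The base cases in item 5 come for free from the definitions of the three structures given above: semialgebraic sets lie in the globally subanalytic structure and hence in the log-exp structure, so by Lemma \ref{closure_semialgebraic} the graphs of $+,-,\times$ and of any affine map are definable, while the order relations $\le,<$ are definable by the order axiom; analytic functions on compact definable sets lie in the globally subanalytic structure by construction; and $\exp$ is a generator of the log-exp structure. The definability of $\log$ I would defer until after item 1, since $\log=\exp^{-1}$ is the generalized inverse of a definable function.

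For the algebraic closure properties in items 1--3 I would express each composite graph as a projection of an intersection of already-definable sets, introducing auxiliary coordinates and eliminating them by the projection axiom. For instance, $\mathrm{gr}(f+g)$ is the projection onto $(x,t)$ of $\{(x,u,v,t):(x,u)\in\mathrm{gr}(f),\,(x,v)\in\mathrm{gr}(g),\,t=u+v\}$, where each conjunct is definable (the first two by padding the definable graphs and permuting coordinates, the last by the semialgebraic graph of addition) and the conjunction is an intersection; the products, differences, and the order-based constructions $\max\{f,g\}$ and $\min\{f,g\}$ are handled identically, the latter two as a union of two pieces cut out by the order relation. The inverse $f^{-1}$ is just $\mathrm{gr}(f)$ with its two coordinate blocks swapped, and composition $g\circ f$ (item 2), which also subsumes affine precomposition $f(Gz+h)$, is the projection onto $(x,z)$ of $\{(x,y,z):(x,y)\in\mathrm{gr}(f),\,(y,z)\in\mathrm{gr}(g)\}$. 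The reciprocal (item 3) is the projection onto $(x,t)$ of $\{(x,u,t):x\in B,\,(x,u)\in\mathrm{gr}(f),\,ut=1\}$, where the hypotheses $f\neq 0$ on the compact set $B$ guarantee the result is genuinely the graph of a finite-valued function. With item 1 in hand, $\log=\exp^{-1}$ completes item 5, and item 6 follows by writing $\mathrm{gr}(\mathbf{1}_\Theta)=(\Theta\times\{1\})\cup((\mathbb{R}^n\setminus\Theta)\times\{0\})$ and analogously for the characteristic function, using that singletons are definable elements of $\mathcal{O}_1$ and that complements and products preserve definability. The same constructions apply componentwise to vector-valued maps, and extended-valued functions are handled via the domain convention of the footnote to Definition \ref{def_definable}.

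The one step requiring genuine care --- and the part I expect to be the main obstacle --- is the partial supremum and infimum in item 4, because $\sup_{y\in C}f(x,y)$ need not be attained, so its graph cannot be obtained from the naive projection $\{(x,t):\exists y\in C,\,f(x,y)=t\}$, which yields only the range. Instead I would encode $t=\sup_{y\in C}f(x,y)$ by its first-order characterization as a least upper bound: the conjunction of the upper-bound condition $\forall y\in C,\,f(x,y)\le t$ and the tightness condition $\forall s<t\;\exists y\in C,\,f(x,y)>s$. The existential quantifiers are projections, and the universal quantifiers are handled by the identity $\forall=\neg\exists\neg$, i.e.\ by complementing the projection of a complement; concretely, $\{(x,t):\forall y\in C,\,f(x,y)\le t\}$ is the complement of the projection onto $(x,t)$ of the definable set $\{(x,t,y):y\in C,\,f(x,y)>t\}$. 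Each ingredient is definable by the tools already assembled, so the intersection of the two conditions gives $\mathrm{gr}(g)$; the infimum is symmetric, and the case where the supremum is $+\infty$ is absorbed by the same domain convention. Throughout, the only nontrivial bookkeeping is the repeated padding-and-permuting of coordinates needed to line up the axioms, which is routine once the coordinate-permutation closure is established at the outset.
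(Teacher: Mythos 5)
The paper offers no proof of Proposition \ref{facts_of_definable} at all --- it records these facts and points the reader to \cite{attouch2010proximal} and \cite{van1996geometric} --- so your proposal is not a variant of the paper's argument but a self-contained derivation from Definitions \ref{o-minimal-structure} and \ref{def_definable}, and it follows the standard o-minimality playbook correctly. The product claim via $(A\times\mathbb{R}^n)\cap(\mathbb{R}^m\times B)$ is right; the arithmetic, order, $\max/\min$, inverse and composition claims are correctly reduced to padding, intersecting with semialgebraic graphs (available inside the log-exp structure through Lemma \ref{closure_semialgebraic} and Examples of the structures), and eliminating auxiliary coordinates by projection; and you correctly identify the one genuinely delicate item --- the partial $\sup/\inf$ of item 4, where the naive projection only captures attained values --- and fix it with the first-order least-upper-bound encoding, handling universal quantifiers as $\neg\exists\neg$, i.e.\ complement--project--complement. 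This is exactly how these closure properties are established in the references; what your route buys is transparency (every item traced to the axioms), at the cost of the coordinate bookkeeping the paper deliberately outsources.

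Three points would need more care in a full write-up, though none threatens the architecture. First, closure under coordinate permutations, which you invoke constantly, should be derived once explicitly before anything else: given definable $A\subseteq\mathbb{R}^n$ and a permutation $\sigma$, form $\mathbb{R}^n\times A$, intersect with the diagonals $\{z\in\mathbb{R}^{2n}\,:\,z_j=z_{n+\sigma(j)}\}$ supplied by Definition \ref{o-minimal-structure}, and delete the trailing block by iterating the canonical (last-coordinate) projection; your parenthetical ``standard consequence'' is true, but this is the lemma every later construction leans on, and note the axiom only projects away the \emph{last} coordinate, so the order of operations matters. Second, in item 3 your set $\{(x,u,t): x\in B,\ (x,u)\in\mathrm{gr}(f),\ ut=1\}$ is definable only if $B$ itself is definable; the statement as printed says merely ``compact,'' so you are silently reading the intended hypothesis ``definable compact'' (as in item 5 and in the main text's ``reciprocal restricted to a compact domain''), which is the right reading but should be said out loud. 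Third, ``analytic functions on compact definable sets lie in the globally subanalytic structure by construction'' is too quick: the generators have box domains $[-1,1]^n$ (or $[a,b]^n$) with $f$ analytic on a neighborhood of the whole box, so for a general definable compact $B$ one needs to cover $B$ by finitely many small closed boxes on which $f$ extends analytically, rescale each affinely onto $[-1,1]^n$ (affine graphs being semialgebraic), and recover $\mathrm{gr}(f|_B)$ as a finite union of intersections with $B\times\mathbb{R}$.
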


\subsection{Local convergence to global Nash equilibrium solutions}\label{local_conv_global_proof}
%Now we are ready to prove Theorem \ref{local_conv_global}.
\begin{proof}[Proof of Theorem \ref{local_conv_global}]
The proof consists of four steps. We begin by verifying the definability around the initialization. We  %and 
then %move on to 
specify $\epsilon_0$ and show that the iterates are uniformly bounded in a similar neighborhood of the initialization. Finally, we obtain convergence to Nash equilibrium solutions. %are obtained. %from these two properties. 
\paragraph{Step 1: Verification of definability around initialization.}
We first verify that \[
h^{\text{MF-OMO}}(\theta):=f^{\text{MF-OMO}}(\theta)+\mathcal{I}_{\Theta}(\theta)
\]
is definable. Note that $\theta$ is an optimal solution to \eqref{mfvmo-full} if and only if $\theta$ minimizes $h^{\text{MF-OMO}}(\theta)$ over the entire Euclidean space of $\theta$. To see this, note that $f^{\text{MF-OMO}}(\theta)$ is obtained from a finite number of summations, subtractions and products of definable functions (\ie, $P_t(s,a,L_t)$, $r_t(s,a,L_t)$, constant functions and identity functions), and is hence definable by Proposition \ref{facts_of_definable}. In addition, by rewriting $\|y\|_2\leq S(T+1)(T+2)r_{\max}/2$ as $\sum_{i=1}^{S(T+1)}y_i^2\leq S^2(T+1)^2(T+2)^2r_{\max}^2/4$ and applying Lemma \ref{closure_semialgebraic}, $\Theta$ is obviously semialgebraic and hence definable, and hence the characteristic function $\mathcal{I}_{\Theta}(\theta)$ is also definable by Proposition \ref{facts_of_definable}. As a result, the sum of $f^{\text{MF-OMO}}(\theta)$ and   $\mathcal{I}_{\Theta}(\theta)$ is also definable. 

Let $(\pi^\star,L^\star)$ be some Nash equilibrium solution. Define $y^\star$ and $z^\star$ by \[
y^\star:=[V_1^\star(L^\star), V_2^\star(L^\star), \dots,V_T^\star(L^\star), -V_0^\star(L^\star)]\in\mathbb{R}^{S(T+1)}
\]
and $z^\star=[z_{\star,0},z_{\star,1},\dots,z_{\star,T}]\in\mathbb{R}^{SA(T+1)}$ with $z_{\star,t}=[z_{\star,t}^1,\dots,z_{\star,t}^A]$ ($t=0,\dots,T$), where
\[
z_{\star,t}^a=V_t^\star(L^\star)-r_t(\cdot,a,L_t^\star)-P_t^a(L_t^\star)V_{t+1}^\star(L^\star)\in\mathbb{R}^S, \quad t=0,\dots,T-1,\,a\in\mathcal{A}
\]
and $z_{\star,T}^a=V_T^\star(L^\star)-r_T(\cdot,a,L_T^\star)\in\mathbb{R}^S$ ($a\in\mathcal{A}$). Then by Theorem \ref{thm:nash-opt}, Proposition \ref{interpretation_y_z} and Corollary \ref{y_z_bound}, $\theta^\star$ is feasible (\ie, $\theta^\star\in\Theta$) and $A_{L^\star}L^\star=b$, $A_{L^\star}^\top y^\star+z^\star=c_{L^\star}$ and $(z^\star)^\top L^\star=0$. Hence $f^{\text{MF-OMO}}(\theta^\star)=0$ and thus $\theta^\star$ is optimal (\ie,  $\theta^\star\in\Theta^\star$). %Moreover, 

%Let $\theta^\star\in\Theta^\star$ be some optimal solution to \eqref{mfvmo-full}. 
%Then 

For this $\theta^\star\in\Theta^\star$, we have $h^{\text{MF-OMO}}(\theta^\star)=f^{\text{MF-OMO}}(\theta^\star)=0$. By \cite[Theorem 14]{attouch2010proximal} and the continuity of $f^{\text{MF-OMO}}(\theta)$, there exist $\epsilon,\delta>0$ and a continuous, concave, strictly increasing and definable function $\phi:[0,\delta)\rightarrow\mathbb{R}_+$, with $\phi(0)=0$ and $C^1$ in $(0,\delta)$, such that for any $\theta\in\Theta$ with $\|\theta-\theta^\star\|_2\leq \epsilon$ and $0<h^{\text{MF-OMO}}(\theta)<\delta$, the following Kurdyka-Lojasiewicz (KL) condition holds: 
\begin{equation}\label{kl_condition}
\phi'(h^{\text{MF-OMO}}(\theta))\textbf{dist}(0,\nabla_{\theta} f^{\text{MF-OMO}}(\theta)+\mathcal{N}_{\Theta}(\theta))\geq 1.
\end{equation}
Here we use the fact that the limiting sub-differential $\partial h^{\text{MF-OMO}}(\theta)=\nabla_{\theta} f^{\text{MF-OMO}}(\theta)+\mathcal{N}_{\Theta}(\theta)$ \cite[Proposition 3]{attouch2010proximal}.

\paragraph{Step 2: Specify $\epsilon_0$.} Next, we specify the choice of $\epsilon_0$. By the continuity of $f^{\text{MF-OMO}}(\theta)$, there exists $\epsilon'\in(0,\epsilon/3]$ such that for any $\theta\in\Theta$ with $\|\theta-\theta^\star\|_2\leq \epsilon'$, %we have  
\[
h^{\text{MF-OMO}}(\theta)=f^{\text{MF-OMO}}(\theta)\leq \min\left\{\phi^{-1}\left(\frac{\epsilon}{3(M+1/\eta)}\right),\frac{(2-M\eta)\epsilon^2}{18\eta},\delta/2\right\}<\delta.
\]
Let $\epsilon_0:=\min\{\epsilon,\epsilon'\}/C$, where 
\[
C= SA(T+1)(C_P(T+1)^2r_{\max}+C_r(2T+1))+S(T+1)\left(\dfrac{C_PT(T+1)r_{\max}}{2}+C_rT\right)+1.
\]
We now show that for any $L^0\in\Delta(\mathcal{S}\times\mathcal{A})$ with $\|L^0-L^\star\|_1\leq \epsilon_0$,  $\|\theta_0-\theta^\star\|_2\leq C\epsilon_0=\min\{\epsilon,\epsilon'\}$, and hence $h^{\text{MF-OMO}}(\theta_0)\leq
\min\left\{\phi^{-1}(\frac{\epsilon}{3(M+1/\eta)}),\frac{(2-M\eta)\epsilon^2}{18\eta},\delta/2\right\}%\min\{\phi^{-1}(\epsilon/XXX),\epsilon^2/XXX,\delta/2\}
<\delta$.

By Assumption \ref{C2smooth} and the compactness of $\Theta$, the Lipschitz continuity assumptions in Theorem \ref{VMO-vs-MFNE} hold for some constants $C_P,C_r>0$. Since  $\|L^0-L^\star\|_1\leq \epsilon_0$, by the proof of Proposition \ref{perturbation} and the construction of $y^0$ and $z^0$,  $\theta_0=(y^0,z^0,L^0)\in\Theta$ is feasible by the proof of Theorem \ref{MFNE-vs-VMO}, and 
\[
\begin{split}
\|y^0-y^\star\|_{\infty}&=\max_{t\in\mathcal{T}}\|V_t^\star(L^0)-V_t^\star(L^\star)\|_{\infty}\\
&\leq \dfrac{T(T+1)r_{\max}}{2}\max_{t=0,\dots,T-1}\|P_t^{L^0}-P_t^{L^\star}\|_{\infty,1}+\sum_{t\in\mathcal{T}}\|r_t^{L^0}-r_t^{L^\star}\|_{\infty}\\
&\leq \left(\dfrac{C_PT(T+1)r_{\max}}{2}+C_rT\right)\epsilon_0.
\end{split}
\]
Similarly, %we have 
\[
\begin{split}
\|z^0-z^\star\|_{\infty}&\leq\max_{t\in\mathcal{T}}\|V_t^\star(L^0)-V_t^\star(L^\star)\|_{\infty}+\max_{a\in\mathcal{A},t\in\mathcal{T}}\|r_t(\cdot,a,L_t^0)-r_t(\cdot,a,L_t^\star)\|_{\infty}\\
&\qquad+\max_{a\in\mathcal{A},t\in\mathcal{T}}\|P_t^a(L_t^0)V_{t+1}^\star(L^0)-P_t^a(L_t^\star)V_{t+1}(L^\star)\|_{\infty}\\
&\leq \left(\dfrac{C_PT(T+1)r_{\max}}{2}+C_rT\right)\epsilon_0+C_r\epsilon_0\\
&\qquad+\max_{a\in\mathcal{A},t\in\mathcal{T}}\|P_t^a(L_t^0)\|_\infty\|V_{t+1}^\star(L^0)-V_{t+1}^\star(L^\star)\|_\infty \\
&\qquad+\max_{a\in\mathcal{A},t\in\mathcal{T}}\|P_t^a(L_t^0)-P_t^a(L_t^\star)\|_\infty\|V_{t+1}^\star(L^\star)\|_\infty \\
&\leq \left(\dfrac{C_PT(T+1)r_{\max}}{2}+C_r(T+1)\right)\epsilon_0+\left(\dfrac{C_PT(T+1)r_{\max}}{2}+C_rT\right)\epsilon_0\\
&\qquad + r_{\max}(T+1)C_P\epsilon_0\\
&=(C_P(T+1)^2r_{\max}+C_r(2T+1))\epsilon_0,
\end{split}
\]
where $V_{T+1}(L)=0$ for any $L\in\Delta(\mathcal{S}\times\mathcal{A})$. Here we use the fact that $\|P_t^a(L_t^0)\|_\infty=1$,  
$\max_{a\in\mathcal{A},t\in\mathcal{T}}\|P_t^a(L_t^0)-P_t^a(L_t^\star)\|_\infty=\|P^{L^0}-P^{L^\star}\|_{\infty,1}$, and $\max_{t\in\mathcal{T}}|V_t^\star(L^\star)(s)|\leq r_{\max} (T+1)$. 
Also note that here $\|\cdot\|_1$ and $\|\cdot\|_\infty$ are both vector norms (except for $P_t^a(L_t)$, for which $\|\cdot\|_\infty$ is the matrix $\ell_\infty$-norm). 

Hence we have shown that 
\[
\|\theta_0-\theta^\star\|_2\leq 
\|\theta_0-\theta^\star\|_1=\|z^0-z^\star\|_1+\|y^0-y^\star\|_1+\|L^0-L^\star\|_1\leq C\epsilon_0=\min\{\epsilon,\epsilon'\},
\]
where $C$ is a problem dependent constant defined as above. Thus %Accordingly, we also have that
$h^{\text{MF-OMO}}(\theta_0)<\delta$. 
% \[
% C= SA(T+1)(C_P(T+1)^2r_{\max}+C_r(2T+1))+S(T+1)\left(\dfrac{C_PT(T+1)r_{\max}}{2}+C_rT\right)+1.
% \]

%Then for any $\theta_0\in\Theta$ such that $\textbf{dist}(\theta_0,\Theta^\star)\leq \epsilon_0$, $\|\theta_0-\theta^\star\|_2\leq \epsilon_0\leq \epsilon$ and $h^{\text{MF-OMO}}(\theta_0)<\delta$. 

%Let $\bar{\theta}$ be an arbitrary limit point of $\{\theta_k\}_{k\geq 0}$. Then $\bar{\theta}\in\Theta$, and by Proposition \ref{stationary_convergence_pgd}, $\bar{\theta}$ is a stationary point of \eqref{mfvmo-full}, \ie, $0\in\nabla_{\theta} f^{\text{MF-OMO}}(\bar{\theta})+\mathcal{N}_{\Theta}(\bar{\theta})$. Moreover, again 

Now by Proposition \ref{stationary_convergence_pgd} and the feasibility of $\{\theta_k\}_{k\geq 0}$, $h^{\text{MF-OMO}}(\theta_k)=f^{\text{MF-OMO}}(\theta_k)$ is non-increasing, and hence %we have 
\[
h^{\text{MF-OMO}}(\theta_k)\leq h^{\text{MF-OMO}}(\theta_0)\leq\min\left\{\phi^{-1}\left(\frac{\epsilon}{3(M+1/\eta)}\right),\frac{(2-M\eta)\epsilon^2}{18\eta},\delta/2\right\}
%\min\{\phi^{-1}(\epsilon/XXX),\epsilon^2/XXX,\delta/2\}
\leq \delta/2<\delta.
\]
%by continuity of $f^{\text{MF-OMO}}(\theta)$, we have 
% \[
% \lim_{k\rightarrow\infty}h^{\text{MF-OMO}}(\theta_k)=\lim_{k\rightarrow\infty}f^{\text{MF-OMO}}(\theta_k)=f^{\text{MF-OMO}}(\bar{\theta})\leq f^{\text{MF-OMO}}(\theta_0)<\delta.
% \]

\paragraph{Step 3: Show that $\|\theta_k-\theta^\star\|_2\leq \epsilon$, $\|\theta_k-\theta^\star\|_2= O(g(\epsilon_0))$ for some $g$ with $\lim_{\tilde{\epsilon}\rightarrow0}g(\tilde{\epsilon})=0$.} 
Define $f_k=f^{\text{MF-OMO}}(\theta_k)$. 
By \cite[Lemma 10.4]{beck2017first}, %we have that 
\begin{equation}\label{descent_pgd}
f_k-f_{k+1}=f^{\text{MF-OMO}}(\theta_k)-f^{\text{MF-OMO}}(\theta_{k+1})\geq \left(\frac{1}{\eta}-\frac{M}{2}\right)\|\theta_{k+1}-\theta_k\|_2^2.
\end{equation}
Then since $\phi$ (in the KL condition \eqref{kl_condition} above) is concave, strictly increasing and $C^1$ in $(0,\delta)$, we have $\phi'(f^{\text{MF-OMO}}(\theta_k))>0$ as long as $f^{\text{MF-OMO}}(\theta_k)>0$ (since we already have $f^{\text{MF-OMO}}(\theta_k)<\delta$ for all $k\geq 0$), and %hence
\begin{equation}\label{kl_concave_ineq}
\begin{split}
\phi(f_k)-\phi(f_{k+1}))&\geq \phi'(f_k)(f_k-f_{k+1})\geq \phi'(f_k)\left(\frac{1}{\eta}-\frac{M}{2}\right)\|\theta_{k+1}-\theta_k\|_2^2.
% \phi(f^{\text{MF-OMO}}(\theta_k))-\phi(f^{\text{MF-OMO}}(\theta_{k+1}))&\geq \phi'(f^{\text{MF-OMO}}(\theta_k))(f^{\text{MF-OMO}}(\theta_k)-f^{\text{MF-OMO}}(\theta_{k+1}))\\
% &\geq \phi'(f^{\text{MF-OMO}}(\theta_k))\left(\frac{1}{\eta}-\frac{M}{2}\right)\|\theta_{k+1}-\theta_k\|_2^2.
\end{split}
\end{equation}
Here the first inequality is by %comes from 
the concavity of $\phi$.

Now we prove that 
\begin{equation}\label{induction_claim_step3}
    \|\theta_i-\theta^\star\|_2\leq \min\{g(\epsilon_0),\epsilon\} 
    \end{equation}
    for all $i\geq 0$ by induction, where 
\begin{equation}\label{g_def}
\begin{split}
g(\epsilon_0)&=(M+1/\eta)\phi(C_f\epsilon_0)+\sqrt{\frac{2\eta}{2-M\eta}C_f\epsilon_0}+C\epsilon_0\\
&\geq(M+1/\eta)\phi(f_0)+\sqrt{\frac{2\eta}{2-M\eta}f_0}+C\epsilon_0,
\end{split}
\end{equation}
where $C_f>0$ is the Lipschitz constant of $f^{\text{MF-OMO}}(\theta)$ in the sense that $|f^{\text{MF-OMO}}(\theta)-f^{\text{MF-OMO}}(\theta')|\leq C_f\|\theta-\theta'\|_2$ for any $\theta,\theta'\in\Theta$. The existence of $C_f$ is guaranteed by Assumption \ref{C2smooth} and the compactness of $\Theta$. Note that since $\lim_{\epsilon_0\rightarrow0}f_0=0$ due to the continuity of $f^{\text{MF-OMO}}(\cdot)$, we have the desired property $\lim_{\tilde{\epsilon}\rightarrow0}g(\tilde{\epsilon})=0$. 
%where $g(\cdot)$ is such that $\lim_{\tilde{\epsilon}\rightarrow0}g(\tilde{\epsilon})=0$. 

%XXX. Specify $g$. %and $h^{\text{MF-OMO}}(\theta_k)<\delta$

The base case for $i=0$ is trivial. %(XXX. More details?). 
Without loss of generality, we now assume that $f^{\text{MF-OMO}}(\theta_0)>0$, since otherwise the algorithm finds the exact Nash equilibrium solution at the initialization and the iterates terminate/stay unchanged thereafter, and hence all the claims in Theorem \ref{local_conv_global} obviously hold. %automatically.
% due to the fact that $\|\theta_0-\theta^\star\|_2\leq \|\theta_0-\theta^\star\|_1$.

Then for $i=1$, by \eqref{descent_pgd}, %that
\[
\begin{split}
\|\theta_1-\theta^\star\|_2&\leq \|\theta_0-\theta^\star\|_2+\|\theta_1-\theta_0\|_2\\
&\leq C\epsilon_0+\sqrt{\frac{2\eta}{2-M\eta}(f_0-f_1)}\leq C\epsilon_0+\sqrt{\frac{2\eta}{2-M\eta}f_0}\\
&\leq \left\{
\begin{array}{l}
g(\epsilon_0),\\
\dfrac{\epsilon}{3}+\dfrac{\epsilon}{3}\leq \epsilon. %C\epsilon_0+XXX.\sqrt{\epsilon^2}.
\end{array}\right.
\end{split}
\]

Suppose that the claim \eqref{induction_claim_step3} holds %we have $\|\theta_i-\theta^\star\|_2\leq C\epsilon_0$
for $i=0,\dots,k$ ($k\geq 0$). Without loss of generality, we also assume that $f^{\text{MF-OMO}}(\theta_k),f^{\text{MF-OMO}}(\theta_{k+1})>0$, since otherwise the algorithm finds the exact Nash equilibrium solution at iteration $k$ or $k+1$ and the iterates terminate/stay unchanged thereafter, and hence all the claims in Theorem \ref{local_conv_global} obviously hold. %automatically.  %in a finite number of iterations and all the claims in Theorem \ref{local_conv_global} hold automatically. 

%Without loss of generality, we also assume that $f^{\text{MF-OMO}}(\theta_k)>0$ for all $k\geq 0$, since otherwise the algorithm finds the exact MFNE in a finite number of iterations and all the claims in Theorem \ref{local_conv_global} hold automatically. 

Now for any $i\in\{1,\dots,k\}$, since \[
\theta_{i}\in\text{argmin}_{\theta}\,\mathcal{I}_{\Theta}(\theta)+\frac{1}{2}\|\theta-(\theta_{i-1}-\eta\nabla_{\theta} f^{\text{MF-OMO}}(\theta_{i-1}))\|_2^2,
\]
we have 
\[
0\in\mathcal{N}_{\Theta}(\theta_{i})+\theta_{i}-\theta_{i-1}+\eta \nabla_{\theta}f^{\text{MF-OMO}}(\theta_{i-1}).
\]
Hence  $\frac{1}{\eta}(\theta_{i-1}-\theta_i)-\nabla_{\theta}f^{\text{MF-OMO}}(\theta_{i-1})\in\mathcal{N}_{\Theta}(\theta_i)$ since $\mathcal{N}_{\Theta}(\theta_i)$ is a cone, and  by the KL condition at $\theta_i\in\Theta$ with $\|\theta_i-\theta^\star\|_2\leq\epsilon$ and $h^{\text{MF-OMO}}(\theta_i)\in(0,\delta)$, we have
\begin{equation}\label{gradient_property}
\phi'(f_i)\left\|
\frac{1}{\eta}(\theta_{i-1}-\theta_i)-\nabla_{\theta}f^{\text{MF-OMO}}(\theta_{i-1})+\nabla_{\theta}f^{\text{MF-OMO}}(\theta_i)\right\|_2\geq 1.
\end{equation}
\eqref{gradient_property}, together with \eqref{kl_concave_ineq}, immediately implies that
\[
\begin{split}
1&\leq \phi'(f_i)\left(\frac{1}{\eta}\|\theta_i-\theta_{i-1}\|_2+M\|\theta_i-\theta_{i-1}\|_2\right)\\
&\leq \left(\frac{1}{\eta}+M\right)(\phi(f_i)-\phi(f_{i+1}))\dfrac{\|\theta_i-\theta_{i-1}\|_2}{\|\theta_{i+1}-\theta_i\|_2^2},
\end{split}
\]
thus 
\[
\begin{split}
2\|\theta_{i+1}-\theta_i\|_2&\leq 2\sqrt{(M+1/\eta)(\phi(f_i)-\phi(f_{i+1}))}\sqrt{\|\theta_i-\theta_{i-1}\|_2}\\
&\leq (M+1/\eta)\phi(f_i)-\phi(f_{i+1}))+\|\theta_i-\theta_{i-1}\|_2.
\end{split}
\]

By telescoping the above inequality for $i=1,\dots,k$,
\[
\|\theta_{k+1}-\theta_k\|_2+\sum_{i=1}^k\|\theta_{i+1}-\theta_i\|_2\leq (M+1/\eta)\phi(f_1)+\|\theta_1-\theta_0\|_2,
\]
from which we deduce that
\[
\begin{split}
\|\theta_{k+1}-\theta^\star\|_2&\leq \sum_{i=1}^k\|\theta_{i+1}-\theta_{i}\|_2+\|\theta_1-\theta^\star\|_2\\
&\leq (M+1/\eta)\phi(f_0)+\|\theta_1-\theta_0\|_2+\|\theta_1-\theta^\star\|_2\\
&\leq (M+1/\eta)\phi(f_0)+\sqrt{\frac{2\eta}{2-M\eta}f_0}+C\epsilon_0.\\
&\leq\begin{cases}
g(\epsilon_0),\\
\dfrac{\epsilon}{3}+\dfrac{\epsilon}{3}+\dfrac{\epsilon}{3}=\epsilon.
\end{cases}
\end{split}
\]
Note that here we use \eqref{descent_pgd} for $k=0$ and we also use the fact that $f_1\leq f_0$ and $\phi$ is increasing. This completes the induction.

\paragraph{Final step: Convergence to Nash equilibrium solutions.}
%Now by the continuity of $f^{\text{MF-OMO}}(\theta)$, there exists $\epsilon'>0$ such that for any $\theta\in\Theta$ with $\|\theta-\theta^\star\|_2\leq \epsilon'$, $h^{\text{MF-OMO}}(\theta)=f^{\text{MF-OMO}}(\theta)<\delta$. Let $\epsilon_0:=\min\{\epsilon,\epsilon'\}$. Then for any $\theta_0\in\Theta$ such that $\textbf{dist}(\theta_0,\Theta^\star)\leq \epsilon_0$, $\|\theta_0-\theta^\star\|_2\leq \epsilon_0\leq \epsilon$ and $h^{\text{MF-OMO}}(\theta_0)<\delta$. 

Let $\bar{\theta}$ be an arbitrary limit point of $\{\theta_k\}_{k\geq 0}$. Then $\bar{\theta}\in\Theta$, and by Proposition \ref{stationary_convergence_pgd}, $\bar{\theta}$ is a stationary point of \eqref{mfvmo-full}, \ie, $0\in\nabla_{\theta} f^{\text{MF-OMO}}(\bar{\theta})+\mathcal{N}_{\Theta}(\bar{\theta})$. Moreover, again by Proposition \ref{stationary_convergence_pgd} and the feasibility of $\{\theta_k\}_{k\geq 0}$, $h^{\text{MF-OMO}}(\theta_k)=f^{\text{MF-OMO}}(\theta_k)$ is non-increasing, and hence by the continuity of $f^{\text{MF-OMO}}(\theta)$,  
\[
h^{\text{MF-OMO}}(\bar{\theta})=\lim_{k\rightarrow\infty}h^{\text{MF-OMO}}(\theta_k)=\lim_{k\rightarrow\infty}f^{\text{MF-OMO}}(\theta_k)=f^{\text{MF-OMO}}(\bar{\theta})\leq f^{\text{MF-OMO}}(\theta_0)<\delta.
\]

%Let $\bar{\theta}$ be an arbitrary limit point of $\{\theta_k\}_{k\geq 0}$. 

In addition, by the first three steps above, we have $\|\bar{\theta}-\theta^\star\|_2\leq \epsilon$.  
%and $h^{\text{MF-OMO}}(\bar{\theta})\leq \delta/2<\delta$. 
Hence 
if $h^{\text{MF-OMO}}(\bar{\theta})>0$, then invoking the KL condition at $\bar{\theta}$ yields 
\[
\phi'(h^{\text{MF-OMO}}(\bar{\theta}))\text{dist}(0,\nabla_{\theta} f^{\text{MF-OMO}}(\bar{\theta})+\mathcal{N}_{\Theta}(\bar{\theta}))\geq 1.
\]
However, we also have $\text{dist}(0,\nabla_{\theta} f^{\text{MF-OMO}}(\bar{\theta})+\mathcal{N}_{\Theta}(\bar{\theta}))=0$, which is a contradiction. Hence $\bar{\theta}\in\Theta$ satisfies $f^{\text{MF-OMO}}(\bar{\theta})=0$, which indicates that $\bar{\theta}\in\Theta^\star$. Therefore $\lim_{k\rightarrow\infty}\textbf{dist}(\theta_k,\Theta^\star)=0$, and by the equivalence result in Theorem \ref{thm:nash-opt}, any $\bar{\pi}\in\Pi(\bar{L})$ with $\bar{\theta}=(\bar{y},\bar{z},\bar{L})$ is a Nash equilibrium solution of the mean-field game (\eqref{setup:MFG} and \eqref{consistency_pop_flow}). In addition, we have $\|L^k-L^\star\|_2\leq\|\theta_k-\theta^\star\|_2\leq g(\epsilon_0)$ for all $k\geq 0$. The fact that the limit point set is nonempty comes simply from the boundedness of $\{\theta_k\}_{k\geq 0}$. Moreover, the vanishing limit of the exploitabilty of $\pi^k\in\Pi(L^k)$ is an immediate implication of Theorem \ref{VMO-vs-MFNE} and the fact that $f^{\text{MF-OMO}}(\theta_k)\rightarrow0$ as $k\rightarrow\infty$. The claim about isolated Nash equilibrium solutions is obvious: simply choose $\epsilon_0$ for $g(\epsilon_0)$ such that no other Nash equilibrium solutions exist in the $g(\epsilon_0)$-neighborhood of $L^\star$.
\end{proof}

\end{document}